\documentclass[11pt,reqno]{amsart}

\usepackage[margin=1in]{geometry}

\usepackage{hyperref}
\hypersetup{hypertex=true, colorlinks=true, linkcolor=black, anchorcolor=black, citecolor=black}

\usepackage{amsthm}
\usepackage{amsmath}
\numberwithin{equation}{section}
\usepackage{amssymb}
\usepackage{mathpazo}
\usepackage{mathrsfs}
\usepackage{mathtools}
\usepackage{libertine}

\usepackage[table, svgnames]{xcolor}

\newtheorem{theorem}{Theorem}[section]
\newtheorem{lemma}[theorem]{Lemma}
\newtheorem{corollary}[theorem]{Corollary}
\newtheorem{proposition}[theorem]{Proposition}

\theoremstyle{definition}
\newtheorem{definition}[theorem]{Definition}

\newtheorem{remark}[theorem]{Remark}

\newcommand{\PSH}{\textup{PSH}}

\newcommand{\p}{\partial}

\renewcommand{\bar}{\overline}
\newcommand{\eps}{\varepsilon}

\newcommand{\beq}{\begin{equation}}
\newcommand{\eeq}{\end{equation}}
\newcommand{\ee}{\end{eqnarray*}}
\newcommand{\be}{\begin{eqnarray*}}

\newcommand{\bd}{\begin{enumerate}}
	\newcommand{\ed}{\end{enumerate}}

\renewcommand{\hat}{\widehat}

\newcommand{\RR}{{\mathbb R}}

\newcommand\cE{{\mathcal E}}

\newcommand\cH{{\mathcal H}}

\newcommand\cL{{\mathcal L}}

\newcommand\cP{{\mathcal P}}

\newcommand{\vol}{\mathrm{vol}}

\newcommand{\Ent}{\mathrm{Ent}}

\newcommand{\tr}{\mathrm{tr}}

\newcommand{\Ric}{\mathrm{Ric}}
\renewcommand{\d}{\mathrm{d}}

\newcommand{\reg}{\mathrm{reg}}

\newcommand{\ddc}{\mathrm{dd}^{\rm c}}

\allowdisplaybreaks[4]

\makeatletter
\newcommand{\constant}[2]{%
  \@ifundefined{c@#1counter}{\newcounter{#1counter}}{}%
  \refstepcounter{#1counter}%
  \protected@write\@auxout{}{%
    \string\newlabel{#2}{{\arabic{#1counter}}{\thepage}{}{#1@\arabic{#1counter}}{}}%
  }%
  #1_{\arabic{#1counter}}%
}
\makeatother

\usepackage{seqsplit}

\definecolor{labelkeycolor}{named}{LightYellow}

\makeatletter
\renewcommand{\tocsection}[3]{%
  \indentlabel{\@ifnotempty{#2}{\bfseries\ignorespaces#1 #2\quad}}\bfseries#3}
\renewcommand{\tocsubsection}[3]{%
  \indentlabel{\@ifnotempty{#2}{\ignorespaces#1 #2\quad}}#3}

\newcommand\@dotsep{4.5}
\def\@tocline#1#2#3#4#5#6#7{\relax
  \ifnum #1>\c@tocdepth
  \else
    \par \addpenalty\@secpenalty\addvspace{#2}%
    \begingroup \hyphenpenalty\@M
    \@ifempty{#4}{%
      \@tempdima\csname r@tocindent\number#1\endcsname\relax
    }{%
      \@tempdima#4\relax
    }%
    \parindent\z@ \leftskip#3\relax \advance\leftskip\@tempdima\relax
    \rightskip\@pnumwidth plus1em \parfillskip-\@pnumwidth
    #5\leavevmode\hskip-\@tempdima{#6}\nobreak
    \leaders\hbox{$\m@th\mkern \@dotsep mu\hbox{.}\mkern \@dotsep mu$}\hfill
    \nobreak
    \hbox to\@pnumwidth{\@tocpagenum{\ifnum#1=1\bfseries\fi#7}}\par
    \nobreak
    \endgroup
  \fi}
\def\l@subsection{\@tocline{2}{0pt}{2.5pc}{5pc}{}}
\def\l@subsubsection{\@tocline{3}{0pt}{5pc}{5pc}{}}
\makeatother

\usepackage{fancyhdr}
\begin{document}

\title{The inverse Monge--Amp\`ere flow on klt Fano varieties}

\author{Jianchun Chu}
\address{School of Mathematical Sciences, Peking University, Yiheyuan Road 5, Beijing 100871, People's Republic of China}
\email{jianchunchu@math.pku.edu.cn}

\author{Tam\'as Darvas}
\address{Department of Mathematics, University of Maryland,
4176 Campus Dr,
College Park, MD 20742, USA}
\email{tdarvas@umd.edu}
\date{}

\author{Yaxiong Liu}
\address{Mathematical Science Research Center, Chongqing University of Technology, No. 69, Hongguang Avenue, Banan District, Chongqing 400054, China.
\newline\hspace*{\parindent}%
Department of Mathematics, University of Maryland,
4176 Campus Dr,
College Park, MD 20742, USA}
\email{yxliu269@cqut.edu.cn, yxliu238@umd.edu}
\date{}

\begin{abstract}
We establish the long time existence of the inverse Monge--Amp\`ere flow on a klt $\mathbb{Q}$-Fano variety $X$. We further prove that $X$ admits a K\"ahler--Einstein metric if and only if the flow converges in $L^1$.
\end{abstract}

\subjclass[2020]{Primary 32Q20; Secondary 53E30, 32W20, 32U15}
\keywords{Inverse MA flow, klt Fano variety, weak flow}

\maketitle

\tableofcontents

\section{Introduction}

Let $X$ be a klt $\mathbb{Q}$-Fano variety.  That is, $X$ is a normal projective variety with klt singularities and ample anticanonical $\mathbb{Q}$-Cartier divisor $-K_X$. In this work, we study K\"ahler--Einstein metrics on such varieties from the perspective of gradient flows.

Fix a sufficiently divisible integer $p>0$ such that $-pK_X$ is very ample, and let $
\iota\colon X\hookrightarrow\mathbb{P}^N$ be a projective embedding induced by sections of $-pK_X$. We fix the smooth K\"ahler form
$$
\omega:=\frac{1}{p}\,\iota^*\omega_{\mathrm{FS}}
$$
on $X$, where $\omega_{\mathrm{FS}}$ denotes the Fubini--Study form of $\mathbb{P}^N$. Thus $\{\omega\} = c_1(X)$. Let $\mathcal H_\omega$ denote the corresponding space of K\"ahler potentials, using the convention $\ddc = \frac{i}{2\pi} \partial \bar \partial$:
$$\mathcal H_\omega := \{u \in C^\infty(X) \textup{ s.t. } \omega_u := \omega + \ddc u > 0 \}.$$
By $\PSH_\omega$ we denote the corresponding space of $\omega$-psh functions. By \cite[Theorem B]{CGZ13} we can simply think of $\PSH_\omega$ as restrictions to $X$ of $\frac{1}{p} \omega_{FS}$-psh functions on $\Bbb C \Bbb P^{N}$, that are not identically equal to $-\infty$.

The search for canonical K\"ahler metrics has its origins in Calabi's conjectures \cite{Calabi54,Calabi57} and Yau's solution of the Calabi conjecture \cite{Yau78a}. In the present setting, where $X$ is a normal klt Fano variety, the natural canonical object is a weak K\"ahler--Einstein (KE) metric satisfying
\begin{equation}\label{eq: KE_eq}
\Ric(\omega_u)=\omega_u
\qquad\text{on }X_{\reg}.
\end{equation}
In accordance with the regularity theory for degenerate complex Monge--Amp\`ere equations on singular spaces \cite{EGZ09,BBGZ13,  BBEGZ19,CC25}, we require
$
u\in\PSH_\omega\cap C^0(X)\cap C^\infty(X_{\reg}).
$ \smallskip

Even for smooth Fano manifolds, the K\"ahler--Einstein equation
\eqref{eq: KE_eq} may fail to admit a solution. Although the variational
framework is discussed in detail in the preliminaries, we recall its
main features here. This approach enlarges the space of smooth K\"ahler
potentials $\mathcal H_\omega$ to the larger space $\mathcal E^1_\omega$
of finite-energy $\omega$-psh potentials. On this space, we define the
functional
\[
    D\colon \mathcal E^1_\omega\longrightarrow \Bbb R,
    \qquad
    D(u):=-E(u)+L(u),
\]
where
\[
    L(u):=-\log\int_X e^{-u}\,d\mu.
\]
Here $E\colon\mathcal E^1_\omega\to\Bbb R$ denotes the
Monge--Amp\`ere energy, and $\mu$ is the adapted measure associated
with $-K_X$. By \cite[Theorem 4.8]{BBEGZ19} the minimizers of $D$ on $\mathcal E^1$ are precisely the solutions to \eqref{eq: KE_eq}. 

The functional $D$ was apparently known to experts as early as the
1970s \cite{Yau78a}. Its first explicit appearance seems to be in
the work of Bando--Mabuchi \cite{BM87}, followed shortly thereafter
by Ding \cite{Ding88}. It was subsequently studied systematically by
Ding--Tian \cite{DT92} and is now often referred to as the Ding
functional.

Rather than seeking minimizers of $D$ directly, we adopt a gradient-flow
approach in this work. Starting from an arbitrary potential \(u_0 \in \mathcal H_\omega\), we deform it along the negative gradient flow of
 \(D\) with respect to the Mabuchi metric on
\(\mathcal H_\omega\). This flow was introduced and studied on smooth
Fano manifolds in \cite{CHT22} and its long time asymptotics was further investigated in \cite{Xia21,His23,Kl24}.

We will give the details of obtaining the flow equation in the preliminaries. For now, we state
the resulting evolution equation in our singular klt setting. Throughout the paper, \(C_x^{(\cdot)}C_t^{(\cdot)}\) denotes a space of functions with \(C_x^{(\cdot)}\)-regularity in the spatial direction and \(C_t^{(\cdot)}\)-regularity in the time direction.

Given
\(u_0\in\cH_\omega\), we seek
\begin{equation}\label{eq: reg_condition}
u\in
{C_x^{\infty}C_t^{1}}\bigl(X_{\mathrm{reg}}\times(0,\infty)\bigr)
\cap
C\bigl(X\times[0,\infty)\bigr)
\cap
{C_x^{0}C_t^{0,1}}\bigl(X\times(0,\infty)\bigr).
\end{equation}
satisfying
\begin{equation}\label{eq:iMAf_X_reg_intr}
\begin{cases}
\frac{1}{V_\omega}(1-\dot{u})(\omega+\ddc u)^n
  =e^{-u+L(u)}\mu
& \text{on } X_{\reg}\times(0,\infty), \\[1mm]
u_t:= u(t,\cdot)\in\PSH_\omega
& \text{on } X\times\{t\}, \\[1mm]
\dot u:=\frac{\partial}{\partial t} u \in L^\infty_{\textup{loc}} & \text{on }X \times (0,\infty) \\[1mm]
u=u_0
& \text{on } X\times\{0\}.
\end{cases}
\end{equation}

Our regularity assumption \eqref{eq: reg_condition} ensures that all derivatives appearing in the above system are well defined in the classical sense. Our main result establishes that this system admits a unique solution for all time. Moreover, the resulting flow converges to a Kähler–Einstein potential if and only if such a potential exists:
\begin{theorem}
\label{thm: main_theorem}
    Let $(X,\omega)$ be a klt Fano K\"ahler variety as described above and let $u_0\in\cH_\omega$. \vspace{1mm}\\
(i) (existence) There exists a unique solution $u$ of the inverse MA flow equation \eqref{eq:iMAf_X_reg_intr} on $X\times[0,\infty)$.\vspace{1mm}\\
(ii) (convergence)  $X$ admits a KE metric if and only if there exists $u_\infty \in \PSH_\omega \cap C^0(X)$  such that $u_t \to u_\infty$  in $L^1(X)$ as $t \to \infty$. In this latter case $\omega_{u_\infty}$ is a KE metric that is smooth on $X_\reg$.
\end{theorem}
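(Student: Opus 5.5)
We would prove part (i) by approximation through resolution of singularities, and part (ii) by combining monotonicity of $D$ along the flow with the variational characterization of KE potentials from \cite{BBEGZ19}.

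\textbf{Existence.} First we rewrite the flow as a parabolic equation for $\varphi$. Writing $1-\dot u = e^{-\dot\varphi}$ is not natural, so instead we keep the form
\[
\dot u = 1 - \frac{V_\omega\, e^{-u+L(u)}\mu}{\omega_u^n}.
\]
Let $\pi\colon Y\to X$ be a log resolution. On $Y$, $\pi^*\mu$ has density $\prod|s_i|^{2a_i}$ with $a_i>-1$ (the klt condition) with respect to a smooth volume form. We fix a Kähler form $\omega_Y$ and set $\omega_\eps=\pi^*\omega+\eps\omega_Y$. We regularize the density by $\mu_\eps=\prod(|s_i|^2+\eps^2)^{a_i}dV_Y$, suitably normalized. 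On the compact Kähler manifold $Y$, the smooth inverse MA flow for $(\omega_\eps,\mu_\eps)$ exists for all time; this is the smooth theory of \cite{CHT22}, which adapts to a general volume form in place of $e^{h}\omega^n$.

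The heart of the argument is a set of estimates uniform in $\eps$:
\begin{enumerate}
\item $\|u^\eps\|_{C^0}\le C(T)$. We get this from monotonicity of $D_\eps$, the uniform bound on $L$, and Kołodziej-type $L^\infty$ estimates. The latter apply because the right-hand side $e^{-u}\mu_\eps/(1-\dot u)$ has uniformly $L^p$ density with $p>1$, once we know $1-\dot u\ge c>0$.
\item Two-sided bounds $c(T)\le 1-\dot u^\eps\le C(T)$. We obtain these by the maximum principle applied to $\dot u$, differentiating the equation in $t$. The quantity $F=\log(1-\dot u)$ satisfies a linear parabolic inequality whose coefficients involve $\Delta_{\omega_u}$, with lower-order terms controlled by $\dot L$ and $\dot u$.
\item Local higher-order estimates on compact subsets of $Y\setminus E=X_{\reg}$. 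For these we use a Tsuji/Păun-type Laplacian estimate with barrier $\log|s_E|^2$, followed by Evans--Krylov and Schauder theory.
\end{enumerate}
We then let $\eps\to0$ and pass to the limit, which produces $u$ in the class \eqref{eq: reg_condition}. The Lipschitz-in-$t$ regularity comes from item 2, and continuity on $X\times[0,\infty)$ from uniform stability estimates for MA equations with $L^p$ densities.

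\textbf{Uniqueness.} We use a comparison principle for weak flows. For two solutions, the function $w=u-v$ (after subtracting constants, handling $L(u)$ as a time-dependent constant) satisfies a maximum principle on $X_{\reg}$. To deal with the singular set, we perturb by $\delta\log|s_E|^2$, which attains $-\infty$ there, and let $\delta\to0$. Boundedness of $u,v$ and of $\dot u,\dot v$ makes this work.

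\textbf{Convergence.} Along the flow,
\[
\frac{d}{dt}D(u_t)=-\int \dot u\,\bigl(\omega_u^n/V-e^{-u+L}\mu\bigr)\le0,
\]
since $\dot u\,\bigl(1-(1-\dot u)\bigr)$ combined with the equation gives $-\int\dot u^2/(1-\dot u)\,e^{-u+L}\mu$. Hence $D$ decreases. If $X$ admits a KE metric, $D$ is bounded below (in fact proper modulo $\mathrm{Aut}^0$ by Darvas--Rubinstein/BBEGZ). Then $\int_0^\infty\int\dot u^2\,d\nu\,dt<\infty$ along a sequence, and $d_1$-compactness of sublevel sets of $D$ (after a fixed normalization) gives subsequential limits that are minimizers of $D$, hence KE potentials by \cite[Theorem 4.8]{BBEGZ19}.

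The main obstacle in this direction is full $L^1$ convergence when automorphisms are present. The plan there is to use the convexity of $D$ along geodesics, as in \cite{His23,Kl24} (Xia, Hisamoto): the flow is the gradient flow of a geodesically convex functional in the CAT(0)-type metric space $(\mathcal E^1,d_2)$. By the Bačák/Mayer theory, a gradient flow of a convex lsc functional with a minimizer converges weakly; one then upgrades this to $d_1$, hence $L^1$, convergence. Conversely, if $u_t\to u_\infty$ in $L^1$, lower semicontinuity of $D$ together with $\int_t^{t+1}\!\int\dot u^2\to0$ shows that $u_\infty$ solves the KE equation weakly. Its continuity and smoothness on $X_{\reg}$ then follow from \cite{BBEGZ19}.

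We expect the hardest step to be the uniform lower bound $1-\dot u^\eps\ge c$ independent of $\eps$, since everything else depends on it. Our first attempt would be the maximum principle for $\dot u+ A u$ using the uniform $C^0$ bound.
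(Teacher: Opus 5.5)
Your overall architecture (resolve, perturb class and density, derive $\eps$-uniform estimates, pass to the limit, then use a CAT(0)/Ba\v{c}\'ak argument for convergence) matches the paper's. However, several load-bearing steps fail or are missing.

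\textbf{Existence estimates.} Item 2 is false as stated, and the $C^0$ bound, the time regularity and the local higher-order estimates all rest on it.

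At $t=0$ one has $1-\dot u^\eps(0)=V_{\omega_\eps}e^{-u_0+L_\eps(u_0)}g_\eps\,\d V/(\omega_\eps+\ddc u_0)^n$. On $Y\setminus E$ this converges to $V_\omega e^{-u_0+L(u_0)}\mu/\omega_{u_0}^n$, which is in general unbounded near $X_{\mathrm{sing}}$. For example, at an isolated quotient singularity $\mathbb{C}^n/G$ the local embedding functions vanish to order $\geq 2$, so on the cover $\omega_{u_0}^n\lesssim |z|^{2n}\mu$. Hence no $\eps$-uniform upper bound on $1-\dot u^\eps$ can hold up to $t=0$. A maximum principle for $\dot u$ only yields a Riccati-type inequality started from this unbounded data.

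What is needed, and what the paper proves, is a smoothing estimate $1-\dot u^\eps\le C_T\,t^{-n/(n+1)}$. The argument has three steps:
\begin{enumerate}
\item compare with a fixed reference metric $\widehat\omega_\eps$, $\widehat\omega_\eps^n=c_\eps g_\eps\,\d V$, whose potentials are uniformly bounded by Ko{\l}odziej's estimate;
\item apply the maximum principle to the ratio of $1-\dot u^\eps$ and a bounded positive function built from $u^\eps-\varphi_\eps$;
\item integrate the resulting inequality $M'\le A_TM^2-c_TM^{2+1/n}$.
\end{enumerate}
Consequently time-Lipschitz bounds hold only on $[\tau,T]$, and continuity at $t=0$ needs a separate uniform H\"older estimate in $t$.

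On the other side, your $C^0$ bound presupposes a uniform pointwise lower bound on $1-\dot u^\eps$, which you do not prove. Your suggested route (a maximum principle for $\dot u+Au$ using the $C^0$ bound) is circular. A minimum principle for $\log(1-\dot u^\eps)-L_\eps$ would only reduce the problem to a uniform lower bound on $g_\eps\,\d V/(\omega_\eps+\ddc u_0)^n$, which depends delicately on the regularization. The paper never needs such a bound:
\begin{itemize}
\item its $C^0$ estimate is a parabolic Chen--Cheng/De Giorgi argument with auxiliary flows, using only an entropy bound on $e^{F_\eps}$ and uniform Skoda integrability for $\omega_\eps$-psh functions with energy bounded below (itself a compactness plus Demailly--Koll\'ar argument, since $\omega_\eps$ degenerates);
\item continuity of $u_t$ on $X$ uses the integral bound $\int_Y(1-\dot u^\eps)^{-a}e^{-u^\eps+L_\eps(u^\eps)}g_\eps\,\d V\le K$ for some $a>1$.
\end{itemize}

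\textbf{Uniqueness and convergence.} Here the paper's key idea is missing. Your barrier argument does not close, for two reasons.
\begin{itemize}
\item On $Y$ the form $\pi^*\omega+\ddc v$ degenerates along $E$. At a maximum point of $u-v+\delta\log|s_E|^2$ you only get $\omega_u\le\omega_v+\delta\Theta$ with $\Theta$ smooth, and this error cannot be absorbed into $\omega_v^n$ uniformly as the maximum point approaches $E$.
\item The zeroth-order dependence on $u$ has the unfavourable sign and is nonlocal through $L$. So any comparison must go through a Gronwall argument whose constant involves $\sup_X(1-\dot v)$. That quantity need not be bounded near $t=0$: solutions only satisfy $\dot u\in L^\infty_{\mathrm{loc}}(X\times(0,\infty))$.
\end{itemize}

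The paper instead identifies every solution with the minimizing movement flow of $D$ in the CAT(0) space $(\cE^2,d_2)$ (note $d_2$ lives on $\cE^2$, not $\cE^1$). It shows that $-\frac{\d}{\d t}D(u_t)=\frac1{V_\omega}\int_X\dot u_t^2\,\omega_{u_t}^n$. It bounds the metric speed by the same quantity via affine paths. It bounds the slope $|\partial D|(u_t)$ by it via initial velocities of weak geodesics, approximating $u_t$ by envelopes that agree with it on compact subsets of $X_\reg$. The resulting energy-dissipation inequality plus Muratori--Savar\'e's theorem gives $u_t=\check u_t$, hence uniqueness.

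Your convergence argument uses this identification implicitly when invoking Ba\v{c}\'ak's theorem. It also needs the EVI consequence $t\bigl(D(u_t)-D(v)\bigr)\le\tfrac12 d_2^2(u_0,v)$, i.e.\ $D(u_t)\to\inf D$. Without it, your subsequential limits need not be minimizers. The converse also does not follow from $\int_t^{t+1}\!\int_X\dot u^2\to0$, because $L^1$-convergence does not let you pass to the limit in $\omega_{u_t}^n$. Once $D(u_t)\to\inf D$ is known, the converse is just $L^1$-lower semicontinuity of $D$.
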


We establish existence via a detailed approximation scheme on a log resolution of $(X,-K_X)$. After resolving the singularities, we perturb the pullback of the parabolic system and derive a priori estimates that are uniform in the perturbation parameter (see Section~\ref{sec:Existence_inv_MA_flow}). Uniqueness and convergence are established using the Mayer--Streets theory of negative gradient flows in K\"ahler geometry \cite{May,Streets}, making contact with the $\operatorname{CAT}(0)$ geometry of $\mathcal{E}^2_\omega$ (see Section~\ref{sec:Convergence of inverse MA flow}).

Although we do not pursue the algebraic ramifications of our study, we recall that the solvability of \eqref{eq: KE_eq} can be characterized in terms of the algebro-geometric notion of K-stability. This notion was introduced by Tian \cite{Tian97} and subsequently refined by Donaldson \cite{Don02}. Algebraic characterizations of smooth KE manifolds have been established through a substantial body of work \cite{CDS15,TG15,DS16,CSW18,BBJ21,Zh24}. 
Analogous results for singular klt \(\mathbb{Q}\)-Fano  KE varieties have been obtained in \cite{Berman16,LTW21,LTW22,Li22a,LXZ22}, among many other contributions to this rapidly developing literature. 

For related developments concerning the K\"ahler--Ricci flow on
varieties with log terminal singularities, see
\cite{ST17,BBEGZ19}. A parabolic pluripotential theory for
degenerate complex Monge--Amp\`ere equations, with applications
to such flows, was developed in \cite{GLZ20}.
\smallskip

\noindent \textbf{Acknowledgments.} 
The first named author was partially supported by National Key R\&D Program of China 2024YFA1014800 and 2023YFA1009900, and NSFC grants 12471052 and 12271008. The third named author was partially supported by NSFC grant 12601149.
Part of this work was carried out while the third-named author was visiting the Institute for Theoretical Sciences at Westlake University, which he would like to thank for the hospitality and support.

\smallskip
\noindent \textbf{Use of AI.} This project was initiated in the Fall of 2025. We developed the perturbation scheme presented in Section 4 and identified the apriori estimates required to make the Mayer--Streets minimizing movement gradient flow formalism applicable.   We subsequently established most of these estimates, with the notable exception of Proposition 4.11 that was critically assisted by ChatGPT. Apart from these contributions and AI assisted language editing, the mathematical arguments were written by the authors.

\section{Preliminaries}

\subsection{Klt Fano varieties}

We recall some basic notions and results from the variational theory of
canonical metrics on klt $\mathbb{Q}$-Fano varieties, following
\cite{BBEGZ19,Dar17a,DNG18}. Let $X$ be a klt $\mathbb{Q}$-Fano variety
of dimension $n$, and let $\omega$ be a smooth K\"ahler form on $X$, as
described in the introduction, such that $[\omega]=c_1(X)$.

The klt condition means that, for some (equivalently, for every) log resolution
\begin{equation}\label{eq: pi_log_resol}
\pi\colon Y\longrightarrow X,
\end{equation}
(which is an isomorphism over \(X_{\reg}\)) the exceptional divisor
$
\textup{Exc}(\pi)=\bigcup_j E_j
$
has simple normal crossings and
$$
K_Y=\pi^*K_X+\sum_j a_jE_j
$$
as \(\mathbb{Q}\)-divisors on \(Y\), where \(a_j>-1\) for every \(j\).

\subsection{Pluripotential preliminaries.} 
Following \cite{BEGZ10,BBEGZ19}, for any $u\in\PSH_\omega$, the
associated non-pluripolar Monge--Amp\`ere measure is defined by
\begin{align*}
    \omega_u^n
    \coloneqq
    \lim_{k\to\infty}
    \mathbf{1}_{\{u>-k\}}
    \left(\omega+\ddc\max\{u,-k\}\right)^n.
\end{align*}
This is a positive Borel measure on $X$ satisfying
\[
    \int_X\omega_u^n\leq V_\omega
    \coloneqq\int_X\omega^n.
\]
For $p\geq1$, the full-mass class and the finite-energy class are
defined by
\begin{align*}
    \cE_\omega\coloneqq
    \left\{
        u\in\PSH_\omega
        \ \middle|\
        \int_X\omega_u^n=V_\omega
    \right\}, \ \ \  \ \ 
    \cE^p_\omega\coloneqq
    \left\{
        u\in\cE_\omega
        \ \middle|\
        \int_X|u|^p\omega_u^n<\infty
    \right\}.
\end{align*}
Thus we have the strict inclusions 
\[
    \cH_\omega \subset\cE^p_\omega \subset\cE_\omega \subset\PSH_\omega.
\]
It follows from \cite{DNG18} that, for every $p\geq1$, the $L^p$ type Mabuchi metric $d_p$ on $\cE^p_\omega$ is complete, moreover 
$\cH_\omega \subset \cE^p_\omega$ is $d_p$-dense. Additionally, $\bigl(\cE^p_\omega,d_p\bigr)$ is a
geodesic metric space, with a distinguished class of geodesic segments coming from solutions to a degenerate complex Monge--Amp\`ere equation.  The case $p=1$ was obtained independently in
\cite{Dar17a}. In particular, \cite[Theorem~4.9]{DNG18} shows that
$\bigl(\cE^2_\omega,d_2\bigr)$ is a $\mathrm{CAT}(0)$ space, shadowing the analogous result from the K\"ahler case \cite{Dar17, CC02, Mab87}.

For $u\in\cH_\omega$, the Monge--Amp\`ere energy is defined by
\[
    E(u) \coloneqq \frac{1}{(n+1)V_\omega}\sum_{j=0}^n \int_X u\,\omega_u^j\wedge\omega^{n-j}.
\]
The same formula extends to bounded $\omega$-psh potentials. Using the
monotonicity of $E$, one then defines, for any $v\in\PSH_\omega$,
\[
    E(v) \coloneqq \inf\left\{ E(u) \ \middle|\ u\in\PSH_\omega\cap L^\infty(X),\ u\geq v \right\} \in[-\infty,\infty).
\]
We have $E(v)>-\infty$ if and only if
$v\in\cE^1_\omega$. Moreover, $E$ is affine along finite-energy weak
geodesics, reflecting the close relation of the Monge--Amp\`ere energy to the $d_1$-geometry of $\cE^1_\omega$. \medskip

\subsection{The adapted measure and KE metrics.} Now we discuss the construction of the adapted measure $\mu$ of $-K_X$. 
Let $r$ be a positive integer such that $rK_X$ is Cartier. Let $h$ be a smooth Hermitian metric on the line bundle $rK_X$ such that. For a nowhere vanishing holomorphic section $\sigma$ of $rK_X$ over an open set $U$,  $|\sigma|_{h}$ is a smooth function on $U$.
The adapted measure is locally defined by
\begin{align*}
    \mu\coloneqq \frac{\big(i^{rn}\sigma\wedge\bar{\sigma}\big)^{\frac{1}{r}}}{|\sigma|_{h}^{\frac{2}{r}}}.
\end{align*}
Thus defined $\mu$ depends on $h$, and we choose $h$ so that $\textup{Ric} \mu = \omega$. 
The  klt condition of $X$ yields that the adapted measure $\mu$ has $L^{p_0}$-density relative to $\omega^n$ for some $p_0>1$ (see \cite[Lemma~3.2]{BBEGZ19}).

Following \cite[\S3.2]{BBEGZ19}, we recall the definition of a KE metric on $X$. For $u\in\cE^1_\omega$, the $(1,1)$-current $
\omega_u\coloneqq\omega+\ddc u
$
is called a \emph{KE metric} if
\begin{align}
\frac{1}{V_\omega}\omega_u^n
=\mu_{\omega_u}
\coloneqq e^{-u+L(u)}\mu,
\end{align}
where
\begin{align*}
L(u)\coloneqq-\log\int_X e^{-u}\mu.
\end{align*}
Although defined a priori as a current, any such $\omega_u$ has a continuous potential $u\in C^0(X)$ and is smooth on $X_{\reg}$; see \cite[Lemma~3.6]{BBEGZ19}.

Most importantly, the variational characterization \cite[Theorem~4.8]{BBEGZ19} states that $\omega_u$ is KE if and only if $u$ minimizes the Ding functional
$$
D(u)\coloneqq -E(u) +L(u), \ \ \ u \in \cE^1_\omega.
$$

\subsection{The inverse MA flow.}
We give the definition of inverse Monge--Amp\`ere (MA) flow on klt Fano varity $X$ with $\omega\in c_{1}(X)$. When $(X,\omega)$ is a smooth Fano manifold, in \cite{CHT22}, Collins--Hisamoto--Takahashi introduced the inverse Monge--Amp\`ere (MA) flow, which arises as the gradient flow of Ding functional with respect to the Mabuchi $L^2$ metric. A basic calculation gives the following flow equation:
\begin{align}\label{eq: CHT_flow_eq}
	\dot u=1-e^{\rho_u},
\end{align}
where $\rho_u$ denotes the normalized Ricci potential of the K\"ahler metric $\omega_u$, i.e
\[
\Ric(\omega_{u}) = \omega_{u}+\ddc\rho_{u}, \quad
\int_{X}e^{\rho_{u}}\omega_{u}^{n} = V_{\omega}.
\]
Direct calculation shows
\[
\rho_{u} = -u-\log\left(\frac{\omega_{u}^{n}}{\omega^{n}}\right)
+\rho_{0}-\log\left(\frac{1}{V_{\omega}}\int_{X}e^{-u+\rho_{0}}\omega^{n}\right).
\]
Note that for a smooth Fano manifold, the adapted measure $\mu$ is simply $e^{\rho_{0}}\omega^{n}$. Thus \eqref{eq: CHT_flow_eq} can be rewritten as
\begin{align}
\label{eq:iMAf-smooth}
\frac{1}{V_\omega}(1-\dot u)\omega_u^n
= e^{-u+L(u)+\rho_{0}}\omega^{n}
= e^{-u+L(u)}\mu.
\end{align}
The advantage of this formulation is that both sides of the equation
are well defined on klt $\mathbb{Q}$-Fano varieties, making it the
natural starting point for our analysis.

\begin{definition}\label{definition of IMAF}
Let $(X,\omega)$ be a klt normal Fano K\"ahler variety. We say that a function $u(x,t)$ on $X\times[0,\infty)$ is a solution of the inverse MA flow starting from $u_0\in\cH_\omega$ if 
$$u\in
{C_x^{\infty}C_t^{1}}\bigl(X_{\mathrm{reg}}\times(0,\infty)\bigr)
\cap
C\bigl(X\times[0,\infty)\bigr)
\cap
{C_x^{0}C_t^{0,1}}\bigl(X\times(0,\infty)\bigr)$$
satisfies 
\begin{equation}\label{eq:iMAf-X_reg}
\begin{cases}
\frac{1}{V_\omega}(1-\dot{u})(\omega+\ddc u)^n
  =e^{-u+L(u)}\mu
& \text{on } X_{\reg}\times(0,\infty), \\[1mm]
u_t:= u(t,\cdot)\in\PSH_\omega
& \text{on } X\times\{t\}, \\[1mm]
\dot u:=\frac{\partial}{\partial t} u \in L^\infty_{\textup{loc}} & \text{on }X \times (0,\infty) \\[1mm]
u=u_0
& \text{on } X\times\{0\}.
\end{cases}
\end{equation}
\end{definition}
\bigskip

Fix a smooth resolution $\pi:Y\rightarrow X$ with exceptional divisor $E \subset Y$ as in \eqref{eq: pi_log_resol}. By \cite[Lemma 3.2]{BBEGZ19}, one has
 \begin{align*}
 	\hat\mu\coloneqq\pi^*\mu=e^{\psi^+-\psi^-} \, \d V,
 \end{align*}
where $\d V$ is a smooth volume form on $Y$, and $\psi^{\pm}$ are quasi-plurisubharmonic functions with divisorial analytic singularities, smooth outside $E$, with the singularity types of $\psi^+/\psi^-$ determined by positive/negative parts of $E = E^+ + E^-$. 
By the klt condition, $e^{-\psi^-}\in L^{p_{0}}$ for some 
\begin{equation}\label{eq: p_0_def}
p_{0}>1.
\end{equation}
that we fix for the remainder of the paper. 

Slightly abusing precision, we will often not distinguish $X_\reg$ and $Y\backslash E$, and we will often identify the spaces $\mathcal{H}_\omega(X)/\mathcal{E}^1_\omega(X)/\PSH_\omega(X)$ with $\mathcal{H}_{\pi^* \omega}(Y)/\mathcal{E}^1_{\pi^* \omega}(Y)/\PSH_{\pi^* \omega}(Y)$. \smallskip

After pulling it back to $Y$, the inverse MA flow \eqref{eq:iMAf-X_reg} is equivalent to
\begin{equation}\label{eq:pullback_IMAF}
\begin{cases}
\frac{1}{V_\omega}(1-\dot{u})(\pi^* \omega+\ddc u)^n
 	=e^{-u+L(u)+\psi^+-\psi^-}\,\d V
& \mbox{in $Y\backslash E\times(0,\infty)$},  \\[1mm]
u_t\in\PSH_{\pi^* \omega}(Y) & \mbox{on $Y\times\{t\}$}, \\[1mm]
u  = u_0 & \mbox{on $Y\times\{0\}$},
\end{cases}
\end{equation}
where 
\begin{align*}L(u)= -\log\int_Y e^{-u+\psi^+-\psi^-}\, \d V.
\end{align*}

\section{Inverse MA type flow on smooth manifolds}
\label{IMAF on general manifolds}

In this section, we generalize the long time existence result of \cite{CHT22} to general K\"ahler manifolds. This is crucial when we construct the perturbed flow and some auxiliary flows in Section \ref{sec:Existence_inv_MA_flow}.

Let $(Y,\alpha)$ be a compact K\"ahler manifold, let $dV$ be a smooth
volume form, and let
\[
    f\in C^\infty(Y\times[0,\infty))
\]
be a positive function. We define
\[
    L_t(v)\coloneqq-\log\int_Y e^{-v(x)}f(x,t)\,dV(x).
\]
For $\lambda \in [0,1]$ and any  $u_0\in\cH(Y,\alpha)$, we consider the following inverse MA type flow starting from $u_{0}$:
\begin{align}
\label{eq:alpha_IMAF}
\begin{cases}
V_{\alpha}^{-1}(1-\dot{u})(\alpha+\ddc{u})^n
 	=e^{-\lambda u+L_t(\lambda u)} f \d V & \mbox{in $Y\times[0,\infty)$},  \\[1mm]
u_t \in \cH_\alpha & \mbox{on $Y\times\{t\}$}, \\[1mm]
u(0)  = u_0 & \mbox{on $Y\times\{0\}$},
\end{cases}
\end{align}
where  $ V_{\alpha} \coloneqq \int_{Y}\alpha^{n}$.
Following the arguments of \cite[Proposition 2.6, 4.1, 4.3]{CHT22} with some refinements, we prove the long time existence for solutions to the above flow equation.

\begin{theorem}
\label{thm:alpha_IMAF_exist}
For any $u_0 \in \mathcal H_\alpha$ there exists a smooth solution of \eqref{eq:alpha_IMAF} on $Y \times [0,\infty)$.
\end{theorem}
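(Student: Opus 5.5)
Rewrite the flow as a fully nonlinear parabolic scalar equation. Since $1-\dot u>0$ is forced, \eqref{eq:alpha_IMAF} is equivalent to
\[
\dot u = F(u) := 1 - \frac{V_\alpha\, e^{-\lambda u + L_t(\lambda u)} f\, \d V}{(\alpha+\ddc u)^n}.
\]
This is strictly parabolic at any $u\in\cH_\alpha$: the linearization in the spatial variables is $(1-\dot u)\Delta_{\alpha_u}$. The nonlocal term $L_t(\lambda u)$ is a smooth functional of $u$ and only contributes a lower order, spatially constant perturbation. Short time existence of a smooth solution from $u_0\in\cH_\alpha$ therefore follows from the standard inverse function theorem argument in parabolic Hölder spaces, as in \cite[Proposition 2.6]{CHT22}.

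For long time existence, I fix $T<\infty$ and derive a priori estimates on $[0,T]$ that depend only on $T$ and the data. Bootstrapping and a continuity argument then finish the proof. The key quantity is $\dot u<1$. The steps are as follows.

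\textbf{(a) A uniform bound on $\dot u$.} Set $w=-\log(1-\dot u)$. Differentiating the equation in $t$ gives
\[
\dot w = (1-\dot u)\Delta_{\alpha_u}\dot u - \lambda \dot u + \tfrac{d}{dt}L_t(\lambda u) + \partial_t \log f.
\]
The term $\frac{d}{dt}L_t(\lambda u)$ is an average of $\lambda\dot u - \partial_t\log f$ against a probability measure, so it is bounded by $\sup|\dot u|+C$. The maximum principle applied to $\dot u$ (equivalently to $w$) then yields $\sup_Y|w|\le C e^{CT}$. This gives $\dot u\le 1-\delta_T$ and $\dot u\ge -C_T$ on $[0,T]$, and hence $|u|\le C_T$.

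\textbf{(b) A volume form bound and a $C^0$ bound.} With $\dot u$ and $u$ bounded, $L_t(\lambda u)$ is bounded. The equation then gives $C_T^{-1}\d V\le \alpha_u^n\le C_T\d V$.

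\textbf{(c) Laplacian and higher estimates.} I run the Aubin--Yau/Siu type computation on $\log\tr_\alpha\alpha_u - Au$. The equation reads $\log\alpha_u^n = \log(\d V) + G$, where $G=-\log(1-\dot u)-\lambda u+L_t+\log f+\log V_\alpha$ is bounded but not smooth in space. The difficulty is the term $\Delta_{\alpha_u} w$ that appears when $\Delta_\alpha G$ is hit. To handle it, I consider instead the quantity
\[
Q=\log\tr_\alpha\alpha_u - Au + Bw,
\]
and use the evolution of $w$ from (a) to absorb the bad term. Concretely, the operator $\partial_t-(1-\dot u)\Delta_{\alpha_u}$ applied to $\log\tr_\alpha\alpha_u$ produces $\Delta_\alpha w$-terms that match those coming from $w$. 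This is the refinement of \cite[Proposition 4.1]{CHT22}, and I expect it to be \textbf{the main obstacle}. The issue is that in the general Kähler (non-Fano) setting the reference Ricci curvature has no sign, so the constants must be tracked carefully, with $\lambda\in[0,1]$ and $t$-dependent $f$.

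Once $\tr_\alpha\alpha_u\le C_T$ is established, the volume bound gives uniform equivalence $C_T^{-1}\alpha\le\alpha_u\le C_T\alpha$. The equation is then uniformly parabolic and concave in the form $\log(1-\dot u)+\log\alpha_u^n=\dots$, which is concave in $(\dot u,\ddc u)$. The Evans--Krylov/Krylov--Safonov parabolic estimates give $C^{2,\beta}$ bounds, and Schauder bootstrapping gives all higher derivatives on $[\epsilon,T]$, uniformly up to $t=0$ by smoothness of $u_0$.

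\textbf{(d) Conclusion.} The estimates show that the solution extends smoothly past any finite maximal time, with $u_t\in\cH_\alpha$ preserved by the lower bound on $\alpha_u$. Hence the solution exists on $Y\times[0,\infty)$.
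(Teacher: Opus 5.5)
\textbf{There is a genuine gap in step (a), and steps (b)--(d) rest on it.} Write $\rho=\log(1-\dot u)=-w$. Differentiating the logarithm of \eqref{eq:alpha_IMAF} gives $\dot\rho=\Delta_{\alpha_u}(e^\rho)+\lambda(e^\rho-1)+\frac{\d}{\d t}L_t(\lambda u)+\partial_t\log f$. (Your formula for $\dot w$ has the signs of the last three terms reversed and a spurious factor $1-\dot u$ on the Laplacian term.) Let $\nu_t=e^{-\lambda u+L_t(\lambda u)}f\,\d V$, a probability measure, and $M_T=\|\partial_t\log f\|_{L^\infty(Y\times[0,T])}$. Then $\frac{\d}{\d t}L_t(\lambda u)=\int_Y(\lambda\dot u-\partial_t\log f)\,\d\nu_t\le\lambda+M_T$. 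At a spatial maximum of $\rho$, the maximum principle therefore only gives $\frac{\d}{\d t}\rho_{\max}\le\lambda e^{\rho_{\max}}+2M_T$. Equivalently, $y=1-\min_Y\dot u$ satisfies the Riccati inequality $y'\le\lambda y^2+2M_Ty$. This permits blow-up in finite time. So for $\lambda>0$ you get a lower bound on $\dot u$ only on a short interval depending on $u_0$, not $\sup_Y|w|\le Ce^{CT}$; and $\lambda=1$ is exactly the case needed for \eqref{eq:eps_IMAF}. Bounding $|\frac{\d}{\d t}L_t(\lambda u)|$ by $\sup|\dot u|+C$ does not rescue Gronwall, because $\sup|\dot u|$ is exponential in $w$. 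The other direction is circular. At a minimum of $\rho$ you get $\frac{\d}{\d t}\rho_{\min}\ge-\lambda-M_T+\frac{\d}{\d t}L_t(\lambda u)$, and the last term is bounded below only if $\int_Y\dot u\,\d\nu_t$ is, i.e.\ only if $\dot u$ is already controlled from below. Your argument closes only when $\lambda=0$.

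The missing idea is that the $C^0$ bound on $u$ must come \emph{before}, and independently of, any lower bound on $\dot u$. The paper proceeds in this order:
\begin{enumerate}
\item It integrates $\frac{\d}{\d t}L_t$ exactly rather than bounding it. This gives only the one-sided bound $\rho-L_t(\lambda u)\ge-C_T$.
\item It bounds $u$ pluripotentially. $E(u_t)$ is conserved, so $\sup_Yu_t\ge E(u_0)$. Uniform Skoda integrability then controls $e^{-\lambda u}$ in $L^p$. Kolodziej's estimate, applied to $\alpha_u^n\le C_Te^{-\lambda u}f\,\d V$, bounds the oscillation.
\item It bounds the trace using $\log\tr_\alpha\alpha_u-Au$ and the operator $\mathcal{L}=\Delta_{\alpha_u}-e^{-\rho}\partial_t$. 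The identity $\Delta_\alpha\dot u=\partial_t\tr_\alpha\alpha_u$ turns the bad term $\Delta_\alpha\log(1-\dot u)$ into a time derivative absorbed by $\mathcal{L}$. So your $Bw$ correction is unnecessary, and $\Ric(\d V)$ enters only through the bounded quantity $\tr_\alpha\Ric(\d V)$.
\item Only then is $\rho$ bounded above, via the coupled test function $\rho-\log(u+\|u\|_{L^\infty}+1)$. At its maximum the $|\nabla\rho|^2$ term cancels. The term $-\mathcal{L}u=\tr_{\alpha_u}\alpha-n-1+e^{-\rho}$ then supplies the coercive quantity $\tr_{\alpha_u}\alpha\ge c_Te^{\rho/n}$, by AM--GM together with the $C^0$ bound.
\end{enumerate}
A maximum principle for $\dot u$ alone cannot produce this coercive term. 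Once these bounds are in place, your short-time existence and Evans--Krylov/Schauder steps go through as in the paper.
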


For fixed initial data, uniqueness also holds and follows from the (standard) parabolic maximum principle applied to the difference of two smooth solutions. Since  this will not be needed later we leave it to the interested reader to work out the details. 

We note that for later applications only the cases $\lambda =0,1$ will be used, but as elaborted below, the argument can be given for general $\lambda \geq 0$ the same way.

\begin{proof}
Standard parabolic theory first gives a smooth solution on a short time interval. By Propositions \ref{prop:alpha_IMAF_C0_est} and \ref{alpha_IMAF_C2_est} below, both $\alpha_u$ and $1-\dot u$ remain uniformly bounded above and below on every finite time interval; hence, after writing the equation as
\[
\dot u=1-V_\alpha e^{-\lambda u+L_t(\lambda u)}f\frac{\d V}{\alpha_u^n},
\]
the resulting operator is uniformly parabolic and concave in the complex Hessian  $i\partial \bar \partial u$, while its dependence on $u$ and $L_\alpha(u)$ is  uniformly controlled. The parabolic complex Evans--Krylov estimate \cite{Kry82,Chu16} and parabolic Schauder theory \cite{Lie96} therefore give uniform higher-order estimates, as in the direct treatment of the inverse Monge--Amp\`ere flow in \cite[Proposition~2.7 and Corollary~2.8]{CHT22}. The standard continuation argument yields long time existence.
\end{proof}

\begin{proposition}[$C^{0}$ estimate]
\label{prop:alpha_IMAF_C0_est}
Suppose $u$ is a smooth solution of \eqref{eq:alpha_IMAF} on $Y\times[0,T)$. Then there is a constant $C=C(T,Y,\alpha,f,\d V,u_{0},\lambda)$ such that
\[
\|u\|_{L^{\infty}(Y\times[0,T))}\leq C.
\]
\end{proposition}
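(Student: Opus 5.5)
The plan is to bound $u$ from above by the sign of $1-\dot u$, to control $L_t(\lambda u)$ using that upper bound together with conservation of the Monge--Amp\`ere energy, and then to bound $u$ from below by a minimum principle. The lower bound is the real difficulty, and for $\lambda>0$ I only have a heuristic sketch of it.

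\emph{Upper bound.} Since $u_t\in\cH_\alpha$, we have $(\alpha+\ddc u)^n>0$, and the right-hand side of \eqref{eq:alpha_IMAF} is strictly positive. Hence $1-\dot u>0$, that is $\dot u<1$ on $Y\times[0,T)$. Integrating in time gives
\[
u(x,t)\le u_0(x)+t\le \sup_Y u_0+T .
\]

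\emph{Energy conservation and control of $L$.} Because $e^{L_t(\lambda u)}=\bigl(\int_Y e^{-\lambda u}f\,dV\bigr)^{-1}$, the right-hand side of \eqref{eq:alpha_IMAF} has total mass $V_\alpha$ at each time. Integrating the equation over $Y$ therefore gives $\int_Y\dot u\,\alpha_u^n=0$, which says $\frac{d}{dt}E_\alpha(u_t)=0$. So $E_\alpha(u_t)=E_\alpha(u_0)$, and since $E_\alpha(v)\le\sup_Y v$ we get
\[
\sup_Y u_t\ge E_\alpha(u_0).
\]
Combined with the upper bound, $\sup_Y u_t$ stays in a fixed interval. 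Next, $f$ is positive and smooth on the compact set $Y\times[0,T]$, and $e^{-\lambda u}\ge e^{-\lambda(\sup u_0+T)}$. Hence $\int_Y e^{-\lambda u}f\,dV\ge c_T>0$, so $L_t(\lambda u)\le C_T$.

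\emph{Lower bound, $\lambda=0$.} Let $m(t)=\min_Y u_t$, attained at $x_t$. There $\ddc u\ge0$, so $\alpha_u^n\ge\alpha^n$. The equation then gives
\[
1-\dot u(x_t,t)\le V_\alpha e^{L_t(\lambda u)}\,\sup_Y\frac{f\,dV}{\alpha^n}.
\]
For $\lambda=0$ the right-hand side is at most $C_T$. Hamilton's trick then gives $m'(t)\ge 1-C_T$ in the sense of Dini derivatives, so $m(t)\ge\inf_Y u_0-C_TT$.

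\emph{Lower bound, $\lambda>0$: the main obstacle.} Here the same argument gives only $m'\ge 1-C_Te^{-\lambda m}$. This differential inequality can blow down in finite time, so the naive minimum principle is not enough. My first attempt would be to work with $w=\dot u$. Differentiating the logarithm of the equation in $t$ gives
\[
-\frac{\dot w}{1-w}+\Delta_u w=-\lambda w+\dot L_t+\partial_t\log f.
\]
Since $w\le1$, one has $\dot L_t\le\lambda+\sup|\partial_t\log f|$. I would then apply the maximum principle to
\[
G=\log(1-w)+\lambda u-Bt
\]
for a large constant $B$, aiming for a bound $1-\dot u\le C_T$ on $[0,T)$. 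Such a bound integrates immediately to a lower bound on $u$. The danger is the term $\lambda\,\mathrm{tr}_u\alpha$ and the gradient term that appear in $\Delta_u G$. At a maximum point of $G$, where $\nabla\log(1-w)=-\lambda\nabla u$, I would try to absorb them using $\Delta_u u=n-\mathrm{tr}_u\alpha$.

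If that does not close, the fallback is a barrier argument on short subintervals $[k\delta,(k+1)\delta]$ of fixed length $\delta=\delta(T)$. The idea is to compare $u$ with the $x$-independent translate $u_{k\delta}-A(t-k\delta)$, using the a priori bounds $L_t\le C_T$ and $\sup u\le C_T$. This yields a lower bound that degrades by a controlled amount on each step, hence is finite on $[0,T)$. The step I expect to fail in this fallback is the comparison at the start of each subinterval: it needs a positive lower bound on $\alpha_{u_{k\delta}}^n$, which comes from a $C^2$-type bound and not from the $C^0$ information available here.
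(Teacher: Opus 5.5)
Your upper bound, the conservation of $E_\alpha(u_t)$, the resulting bounds $E_\alpha(u_0)\le\sup_Y u_t\le\sup_Y u_0+T$ and $L_t(\lambda u)\le C_T$, and the minimum-principle argument for $\lambda=0$ are all correct. The gap is that there is no proof for $\lambda>0$. This is the case that matters: Theorem~\ref{thm:alpha_IMAF_exist} with $\lambda=1$ is what produces the perturbed flow \eqref{eq:eps_IMAF}.

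Your first attempt cannot work. Write $e^\rho=1-\dot u$. Then $\partial_t(\rho+\lambda u)=\Delta_{\alpha_u}e^\rho+\frac{\d}{\d t}L_t(\lambda u)+\partial_t\log f$. At an interior maximum of $G=\rho+\lambda u-Bt$, the conditions $\nabla\rho=-\lambda\nabla u$ and $\Delta_{\alpha_u}\rho\le\lambda(\tr_{\alpha_u}\alpha-n)$ produce the terms $\lambda e^\rho\tr_{\alpha_u}\alpha$ and $\lambda^2e^\rho|\nabla u|^2_{\alpha_u}$ with the wrong sign, so no contradiction arises. Worse, even a bound $G\le C$ would only say $1-\dot u\le Ce^{-\lambda u}$. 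That is exactly the inequality behind your blow-down ODE $m'\ge 1-C_Te^{-\lambda m}$, so it gains nothing.

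An upper bound on $1-\dot u$ is the wrong target at this stage. The paper bounds $\rho$ from above only in the $C^2$ estimate, via $\rho-\log(u+\|u\|_{L^\infty}+1)$, which presupposes the $C^0$ bound. Your fallback fails for the reason you give yourself.

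The missing idea is to run the minimum principle on $\rho$ in the other direction, and then finish with pluripotential theory instead of a pointwise argument. Differentiating the logarithm of \eqref{eq:alpha_IMAF} gives
\[
\dot\rho=\Delta_{\alpha_u}(e^\rho)+\lambda(e^\rho-1)+\tfrac{\d}{\d t}L_t(\lambda u)+\partial_t\log f .
\]
At a spatial minimum of $\rho$ the Laplacian term is nonnegative and $\lambda(e^\rho-1)\ge-\lambda$. Hamilton's trick then gives $\rho-L_t(\lambda u)\ge-C_T$ on $Y\times[0,T)$, i.e.\ $e^{L_t(\lambda u)}/(1-\dot u)\le C_T$. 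The equation now reads
\[
\alpha_u^n=V_\alpha\,e^{L_t(\lambda u)-\rho}\,e^{-\lambda u}f\,\d V\le C_T\,e^{-\lambda u}\,\d V .
\]
Since $E_\alpha(u_t)=E_\alpha(u_0)$ and $\sup_Y u_t$ stays in the interval you already found, the $u_t$ lie in a compact family of finite-energy $\alpha$-psh functions. Uniform Skoda integrability therefore bounds $\int_Y e^{-pu_t}\alpha^n$ for every $p>1$. Hence the density is uniformly in some $L^{p}$ with $p>1$, and Ko{\l}odziej's $L^\infty$ estimate bounds $\sup_Y u_t-\inf_Y u_t$. Combined with your bounds on $\sup_Y u_t$, this gives the $C^0$ estimate for every $\lambda\ge0$ at once.

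Some such global input is unavoidable here. As your ODE shows, no minimum principle for $u$ alone can exclude finite-time blow-down when $\lambda>0$.
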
 
\begin{proof}Consistent with \cite{CHT22}, we introduce the notation $$e^\rho\coloneqq 1-\dot u.$$
Taking the logarithm of \eqref{eq:alpha_IMAF}, we obtain
\begin{equation}\label{eq:log alpha_IMAF}
\log V_\alpha^{-1}+\rho+\log\alpha_u^n
=-\lambda u+L_t(\lambda u)+\log f+\log \d V.
\end{equation}
Differentiating this identity and using $\dot u=1-e^\rho$ gives
\begin{equation}\label{alpha_IMAF_evol_rho}
\dot\rho
=\Delta_{\alpha_u}(e^\rho)+\lambda(e^\rho-1)
+\frac{d}{dt}L_t(\lambda u)+\partial_t\log f.
\end{equation}
Let $\rho_{\min}(t)\coloneqq\min_Y\rho(\cdot,t)$ and set $M_T\coloneqq
\|\partial_t\log f\|_{L^\infty(Y\times[0,T])}.$
Hamilton's trick and \eqref{alpha_IMAF_evol_rho} show that at almost every
$t$, 
\[
\frac{\d}{\d t}\rho_{\min}(t) 
\geq -\lambda+\frac{d}{dt}L_t(\lambda u)-M_T.
\]
Consequently,
\begin{equation}\label{alpha_IMAF_C0_eqn_1}
\rho-L_t(\lambda u)
\geq \inf_Y\rho(0)-L_0(\lambda u)-(\lambda+M_T)T
\qquad\text{on }Y\times[0,T).
\end{equation}
The Monge--Amp\`ere energy is constant along the flow. Indeed,
\begin{align*}
\frac{\d}{\d t}E(u_t)
&=\frac{1}{V_\alpha}\int_Y\dot u\,\alpha_u^n =1-\int_Y e^{-\lambda u+L_t(\lambda u)}f\,\d V
=0,
\end{align*}
where the last equality follows from the definition of $L_t(\lambda u)$. Since
$\dot u<1$ and $E(u_t)\leq\sup_Yu_t$, it follows that
\begin{equation}\label{alpha_IMAF_C0_eqn_2}
E(u_0)\leq\sup_Yu(\cdot,t)
\leq\|u_0\|_{L^\infty(Y)}+T.
\end{equation}

Combining this with the Zeriahi's uniform Skoda integrability theorem \cite[Corollary 3.2]{Zer01} and the weak $L^1$-compactness result of \cite{GZ07} (c.f. \cite[Lemma 4.13]{Dar19}) we obtain
\begin{equation}\label{alpha-IMAF C0 eqn 3}
	\int_{Y}e^{-pu}\alpha^{n} = e^{-p \sup_X u}\int_{Y}e^{-p(u - \sup_Y u)}\alpha^{n}\leq e^{-p \sup_Y u} C_{p,\alpha}  \leq e^{-p E(u_0)} C_{p,\alpha}  \leq C_{p,u_0, \alpha} \quad \text{, $p>1$}.
\end{equation}
Now rewrite the equation \eqref{eq:alpha_IMAF} as
\begin{equation}\label{alpha-IMAF C0 eqn 4}
\alpha_u^n=V_\alpha e^{-\lambda u+L_t(\lambda u)-\rho}f\,\d V.
\end{equation}
Combining \eqref{alpha_IMAF_C0_eqn_1}, \eqref{alpha-IMAF C0 eqn 3} and  Kolodziej's $L^\infty$-estimate \cite{Kol98},
\[
\sup_{Y}u(\cdot,t)-\inf_{Y}u(\cdot,t) \leq C.
\]
The $C^{0}$-estimate follows from this and \eqref{alpha_IMAF_C0_eqn_2}.
\end{proof}

\begin{proposition}[$C^{2}$ estimate]
\label{alpha_IMAF_C2_est}
Suppose $u$ is a smooth solution of \eqref{eq:alpha_IMAF} on $Y\times[0,T)$. Then there is a constant $C=C(T,Y,\lambda, \alpha,f,dV,u_{0})$ such that
\[
C^{-1}\alpha\leq\alpha_{u}\leq C\alpha.
\]
\end{proposition}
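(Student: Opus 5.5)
The estimate needs two ingredients: a two-sided bound on $e^\rho=1-\dot u$, and a Laplacian estimate. Together these pin down $\alpha_u$ from both sides via the equation.

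\textbf{Step 1: bounds on $\rho$.} The lower bound is essentially \eqref{alpha_IMAF_C0_eqn_1}. By Proposition~\ref{prop:alpha_IMAF_C0_est}, $u$ is bounded on $Y\times[0,T)$, so $L_t(\lambda u)$ is bounded as well, since $f$ is positive and smooth on $Y\times[0,T]$. Hence $\rho\ge -C$.

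For the upper bound, apply the maximum principle to \eqref{alpha_IMAF_evol_rho} at a spatial maximum of $\rho$, where $\Delta_{\alpha_u}(e^\rho)\le 0$. This gives
\[
\frac{\d}{\d t}\rho_{\max}\le \lambda e^{\rho_{\max}}+\frac{\d}{\d t}L_t(\lambda u)+M_T .
\]
The term $\lambda e^{\rho}$ could cause blow-up, so I would instead work with $\dot u$ directly. Since $\dot u=1-e^\rho$, an upper bound on $\rho$ is a lower bound on $\dot u$. Differentiating the flow in $t$ gives
\[
\ddot u=e^\rho\bigl(\Delta_{\alpha_u}\dot u+\lambda \dot u-\tfrac{\d}{\d t}L_t(\lambda u)-\partial_t\log f\bigr),
\]
and $\tfrac{\d}{\d t}L_t(\lambda u)=\int \lambda\dot u\, e^{-\lambda u+L}f\,\d V-\int e^{-\lambda u+L}\partial_t f\,\d V$.

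At a spatial minimum of $\dot u$ (when $\dot u_{\min}<0$), we have $\lambda\dot u_{\min}-\lambda\int\dot u\,(\ldots)\le 0$, because the weight is a probability measure and $\dot u\ge \dot u_{\min}$. The remaining terms are bounded by a constant times $M_T$. This yields $\tfrac{\d}{\d t}\dot u_{\min}\ge -Ce^{\rho}$ there, which is not yet closed.

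An alternative avoids this. Consider $H=\dot u+Au$, or simply note that by concavity of $\log$, $\rho=-\log\det$ stuff. The cleaner route is to get the upper bound on $\rho$ from the Laplacian estimate below via the equation, so I will postpone it.

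\textbf{Step 2: Laplacian estimate (the main step).} Set $w=\tr_\alpha\alpha_u$. Write $\log\alpha_u^n=-\rho+G$, where $G=-\lambda u+L_t+\log f+\log\d V-\log V_\alpha^{-1}$ is bounded with bounded $\ddc$ in terms of $\ddc u$. Following Yau/Aubin, compute the evolution of $\log w$ under the operator $\partial_t-e^\rho\Delta_{\alpha_u}$. Since $\dot u=1-e^\rho$,
\[
\partial_t \alpha_u=-\ddc e^\rho=-e^\rho(\ddc\rho+\d\rho\wedge \d^c\rho).
\]
Combined with $\ddc\rho=-\ddc\log\alpha_u^n+\ddc G$, this shows that $(\partial_t-e^\rho\Delta_{\alpha_u})\log w$ satisfies the usual Aubin--Yau inequality
\[
\ge -Ce^\rho\,\tr_{\alpha_u}\alpha-Ce^\rho-\text{(gradient terms)}.
\]
The gradient terms come from the $\d\rho\wedge\d^c\rho$ piece, which has a favorable sign: $-e^\rho|\partial\rho|^2$ in $\partial_t w$ enters with a minus sign, so $\partial_t\log w\le\cdots$. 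This is the direction needed for an upper bound on $w$.

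Take $Q=\log w-Bu$ with $B$ large, as in \cite[Prop.~4.1]{CHT22}. At a maximum point, the term $e^\rho B\tr_{\alpha_u}\alpha$ absorbs the curvature term, and the $\dot u$ contribution $-B\dot u\ge -B$ is harmless. This gives $\tr_{\alpha_u}\alpha\le C$ at the max. Using $\alpha_u^n=e^{G-\rho}\alpha^n$ with $\rho\ge -C$ from Step~1, we get $w\le C\,(\tr_{\alpha_u}\alpha)^{n-1}e^{-\rho}\le C$ there, and hence $Q\le C$ everywhere. So $\alpha_u\le C\alpha$. This step, namely handling the $e^\rho$ weights and the $\d\rho$ terms so that the maximum principle closes, is the main obstacle.

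\textbf{Step 3: lower bound.} The upper bound on $\alpha_u$ gives $\alpha_u^n\le C\alpha^n$, and so $e^{-\rho}=e^{-G}\alpha_u^n/\alpha^n\le C$, i.e. $\rho\ge -C$; we already have this. For the reverse bound, the upper bound on $\rho$ is exactly what Step~1 lacks.

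I would obtain it by applying the maximum principle to $\rho+Au$ using \eqref{alpha_IMAF_evol_rho}. There,
\[
\Delta_{\alpha_u}e^\rho=e^\rho(\Delta\rho+|\partial\rho|^2),
\]
and at a max of $\rho+Au$ we have $\Delta_{\alpha_u}\rho\le -A\Delta_{\alpha_u}u=-A(n-\tr_{\alpha_u}\alpha)\le -An+AC$ from Step~2. The resulting term $-Ane^\rho$ at large $\rho$ dominates $\lambda e^\rho$ once $A>\lambda/n$ plus constants, together with the positive $+A\dot u=A(1-e^\rho)$ contribution. This bounds $\rho$ above.

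Then $\alpha_u^n\ge c\alpha^n$, and combined with $\alpha_u\le C\alpha$ this gives $\alpha_u\ge C^{-1}\alpha$ by the eigenvalue argument.
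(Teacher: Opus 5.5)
Your Steps 1 and 2 are essentially the paper's proof of $\alpha_u\le C\alpha$: the lower bound $\rho\ge -C$ comes from \eqref{alpha_IMAF_C0_eqn_1}, and the upper bound comes from Yau's Schwarz lemma with the test function $\log\tr_\alpha\alpha_u-Bu$. One slip there: your displayed inequality for $(\partial_t-e^\rho\Delta_{\alpha_u})\log w$ points the wrong way. What the computation gives, and what you need, is $\le Be^\rho\tr_{\alpha_u}\alpha+Ce^\rho$.

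The genuine gap is Step 3, the upper bound on $\rho$, which is the whole content of $\alpha_u\ge C^{-1}\alpha$. There are three problems.
\begin{enumerate}
\item The bound $\tr_{\alpha_u}\alpha\le C$ that you take ``from Step 2'' is not available. Step 2 controls $\tr_\alpha\alpha_u$ everywhere, but controls $\tr_{\alpha_u}\alpha$ only at the maximum point of $Q$. Globally you only have $\tr_{\alpha_u}\alpha\le C(\tr_\alpha\alpha_u)^{n-1}\alpha^n/\alpha_u^n\le Ce^{\rho}$. Assuming it bounded is therefore assuming the conclusion.
\item The sign is wrong. At a maximum of $\rho+Au$ you get $\Delta_{\alpha_u}\rho\le -An+A\tr_{\alpha_u}\alpha$, so the trace enters unfavourably. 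Since $\alpha^n/\alpha_u^n$ is comparable to $e^\rho$, AM--GM forces $\tr_{\alpha_u}\alpha\ge c\,e^{\rho/n}$. This term is therefore huge precisely in the regime $\rho\gg1$ where you want a contradiction.
\item You dropped a gradient term. $\Delta_{\alpha_u}e^\rho=e^\rho(\Delta_{\alpha_u}\rho+|\nabla\rho|^2_{\alpha_u})$ contains a positive gradient term, which is bad for an upper bound on $\rho$. At a critical point of $\rho\pm Au$ it equals $A^2e^\rho|\nabla u|^2_{\alpha_u}$, and nothing you have controls it.
\end{enumerate}

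The missing idea is the choice of test function. The paper uses $Q_2=\rho-\log(u+\|u\|_{L^\infty}+1)$ with $\cL=\Delta_{\alpha_u}-e^{-\rho}\partial_t$.
\begin{itemize}
\item From \eqref{alpha_IMAF_evol_rho}, and using $\dot u<1$ to control $\frac{\d}{\d t}L_t(\lambda u)$, one gets $\cL\rho\ge-|\nabla\rho|^2_{\alpha_u}-\lambda-2M_Te^{-\rho}$.
\item At a maximum of $Q_2$, $\nabla\rho=\nabla u/(u+\|u\|_{L^\infty}+1)$. So the term $+|\nabla u|^2_{\alpha_u}/(u+\|u\|_{L^\infty}+1)^2$ produced by $-\cL\log(\cdot)$ cancels $-|\nabla\rho|^2_{\alpha_u}$ exactly.
\item Meanwhile $-\cL u\ge\tr_{\alpha_u}\alpha-n-1$ supplies the good term, entering with coefficient at least $1/(2\|u\|_{L^\infty}+1)$.
\item If $\rho>0$ at that point, this yields $\tr_{\alpha_u}\alpha\le C$ there. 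Then AM--GM, $\tr_{\alpha_u}\alpha\ge C^{-1}e^{\rho/n}$, bounds $\rho$.
\end{itemize}

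Your idea can alternatively be repaired with $\rho-Au$ for $A$ small. First derive $|\nabla u|_\alpha\le C$ from $|\Delta_\alpha u|\le C$ and $\|u\|_{L^\infty}\le C$ by elliptic regularity. Then $A^2|\nabla u|^2_{\alpha_u}\le A^2C\tr_{\alpha_u}\alpha$ is absorbed by $A\tr_{\alpha_u}\alpha$. Some such device for the gradient term is indispensable. Once $\rho\le C$, your final eigenvalue argument goes through as in \eqref{alpha_IMAF_C2_inv_trace}.
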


\begin{proof}
Fix $T'\in(0,T)$ and set
\[
\cL\coloneqq\Delta_{\alpha_u}-e^{-\rho}\partial_t.
\]
All constants below are independent of $T'$. We first prove the upper bound
$\alpha_u\leq C\alpha$. By Yau's Schwarz lemma \cite{Yau78b} (see also
\cite[Chapter~3, Lemma~3.7]{Sze14}),
\begin{equation}\label{alpha_IMAF_C2_Schwarz}
\Delta_{\alpha_u}\log\tr_\alpha\alpha_u
\geq-B\tr_{\alpha_u}\alpha-
\frac{\tr_\alpha\Ric(\alpha_u)}{\tr_\alpha\alpha_u},
\end{equation}
where $-B$ is a lower bound for the bisectional curvature of $\alpha$.
Since $L_t(\lambda u)$ is spatially constant, \eqref{eq:log alpha_IMAF}
gives
\begin{align*}
\tr_\alpha\Ric(\alpha_u)
={}&\Delta_\alpha\log(1-\dot u)
+\lambda\Delta_\alpha u-\Delta_\alpha\log f
+\tr_\alpha\Ric(\d V)\\
={}&-\frac{\Delta_\alpha\dot u}{1-\dot u}
-\frac{|\nabla\dot u|_\alpha^2}{(1-\dot u)^2}
+\lambda(\tr_\alpha\alpha_u-n)-\Delta_\alpha\log f
+\tr_\alpha\Ric(\d V)\\
\leq{}&-e^{-\rho}\partial_t\tr_\alpha\alpha_u
+\lambda\tr_\alpha\alpha_u+C_T.
\end{align*}
Substituting this into \eqref{alpha_IMAF_C2_Schwarz}, we obtain, whenever
$\tr_\alpha\alpha_u\geq1$,
\begin{equation}\label{alpha_IMAF C2 log trace}
\cL\log\tr_\alpha\alpha_u
\geq-B\tr_{\alpha_u}\alpha-C_T.
\end{equation}
On the other hand,
\begin{equation}\label{alpha_IMAF_evol_u}
\cL u
=n+1-\tr_{\alpha_u}\alpha-e^{-\rho}
\leq n+1-\tr_{\alpha_u}\alpha.
\end{equation}

Choose $A=B+1$ and let
$Q_1\coloneqq\log\tr_\alpha\alpha_u-Au$. Suppose that $Q_1$
attains its maximum on $Y\times[0,T']$ at $(x_1,t_1)$. If $t_1=0$, or if
$\tr_\alpha\alpha_u(x_1,t_1)<1$, the maximum of $Q_1$ is bounded using
the initial data and the $C^0$-estimate, and hence
$\tr_\alpha\alpha_u\leq C_T$. Otherwise, the maximum principle and
\eqref{alpha_IMAF C2 log trace}--\eqref{alpha_IMAF_evol_u} give
\[
0\geq\cL Q_1
\geq(A-B)\tr_{\alpha_u}\alpha-C_T-A(n+1)
\]
at $(x_1,t_1)$. Hence
\[
\tr_{\alpha_u}\alpha(x_1,t_1)\leq C_T.
\]
Moreover, Proposition~\ref{prop:alpha_IMAF_C0_est},
\eqref{alpha_IMAF_C0_eqn_1}, and \eqref{eq:alpha_IMAF} imply
$$
\alpha_u^n\leq C_T\alpha^n
\qquad\text{on }Y\times[0,T).
$$
The algebraic trace inequality therefore yields, at $(x_1,t_1)$,
\[
\tr_\alpha\alpha_u\leq\frac{1}{(n-1)!}
(\tr_{\alpha_u}\alpha)^{n-1}\frac{\alpha_u^n}{\alpha^n}
\leq C_T.
\]
Since $u$ is uniformly bounded, this bounds the maximum of $Q_1$, and hence
$\tr_\alpha\alpha_u\leq C_T$ throughout $Y\times[0,T']$. Thus
\begin{align}
    \label{alpha_IMAF_C2_metr_upper}
    \alpha_u\leq C_T\alpha.
\end{align}

We next prove the lower bound. The $C^0$-estimate and the smooth positivity
of $f$ imply that $L_t(\lambda u)$ is uniformly bounded on $[0,T)$. It
follows from \eqref{eq:alpha_IMAF} that
\begin{equation}\label{alpha_IMAF C2 eqn 2}
C_T^{-1}e^\rho\leq\frac{\alpha^n}{\alpha_u^n}
\leq C_Te^\rho.
\end{equation}
Combining this with \eqref{alpha_IMAF_C2_metr_upper}, we obtain
\begin{equation}\label{alpha_IMAF_C2_inv_trace}
\tr_{\alpha_u}\alpha
\leq\frac{1}{(n-1)!}(\tr_\alpha\alpha_u)^{n-1}
\frac{\alpha^n}{\alpha_u^n}
\leq C_Te^\rho.
\end{equation}
It remains to bound $\rho$ from above.

Put
\[
Q_2\coloneqq
\rho-\log\bigl(u+\|u\|_{L^\infty(Y\times[0,T))}+1\bigr).
\]
Notice that
\[
1\leq u+\|u\|_{L^\infty(Y\times[0,T))}+1
\leq2\|u\|_{L^\infty(Y\times[0,T))}+1.
\]
To account for the time dependence of $f$, let
\[
\d\nu_t\coloneqq e^{-\lambda u+L_t(\lambda u)}f\,\d V.
\]
This is a probability measure, and direct differentiation gives
\begin{equation}\label{alpha_IMAF C2_deriv_L}
\frac{\d}{\d t}L_t(\lambda u)
=\int_Y\bigl(\lambda\dot u-\partial_t\log f\bigr)\,\d\nu_t.
\end{equation}
If $M_T\coloneqq\|\partial_t\log f\|_{L^\infty(Y\times[0,T])}$, then
$\dot u<1$ and \eqref{alpha_IMAF C2_deriv_L} imply
$$
\lambda-\frac{\d}{\d t}L_t(\lambda u)-\partial_t\log f
\geq-2M_T.
$$
Using \eqref{alpha_IMAF_evol_rho}, we consequently obtain
\begin{align}
\cL\rho
&=-|\nabla\rho|_{\alpha_u}^2-\lambda
+e^{-\rho}\left(
\lambda-\frac{\d}{\d t}L_t(\lambda u)-\partial_t\log f
\right) \notag\\
&\geq-|\nabla\rho|_{\alpha_u}^2-\lambda-2M_Te^{-\rho}.
\label{alpha_IMAF C2_evol_rho L}
\end{align}

Let $(x_2,t_2)$ be a maximum point of $Q_2$ on $Y\times[0,T']$. If
$t_2=0$, the desired upper bound for $Q_2$ follows from the initial data. If
$\rho(x_2,t_2)\leq0$, the preceding lower bound gives
$Q_2(x_2,t_2)\leq0$. We may therefore assume that $t_2>0$ and
$\rho(x_2,t_2)>0$. At this point, \eqref{alpha_IMAF C2_evol_rho L}
gives
\[
\cL\rho\geq-|\nabla\rho|_{\alpha_u}^2-C_T.
\]
Since $\nabla Q_2=0$ at $(x_2,t_2)$,
\[
\nabla\rho
=\frac{\nabla u}
{u+\|u\|_{L^\infty(Y\times[0,T))}+1}.
\]
The maximum principle and \eqref{alpha_IMAF_evol_u} now yield
\begin{align*}
0\geq\cL Q_2
&=\cL\rho
-\frac{\cL u}{u+\|u\|_{L^\infty(Y\times[0,T))}+1}
+\frac{|\nabla u|_{\alpha_u}^2}
{\bigl(u+\|u\|_{L^\infty(Y\times[0,T))}+1\bigr)^2}\\
&\geq-C_T
+\frac{\tr_{\alpha_u}\alpha-n-1}
{u+\|u\|_{L^\infty(Y\times[0,T))}+1}.
\end{align*}
It follows that $\tr_{\alpha_u}\alpha(x_2,t_2)\leq C_T$. On the other
hand, the arithmetic--geometric mean inequality and
\eqref{alpha_IMAF C2 eqn 2} give
\[
\tr_{\alpha_u}\alpha
\geq n\left(\frac{\alpha^n}{\alpha_u^n}\right)^{1/n}
\geq C_T^{-1}e^{\rho/n}.
\]
Thus $\rho(x_2,t_2)\leq C_T$, and hence $Q_2\leq C_T$ on
$Y\times[0,T']$. Since
$u+\|u\|_{L^\infty(Y\times[0,T))}+1$ is uniformly bounded, we conclude
that $\rho\leq C_T$. Inequality \eqref{alpha_IMAF_C2_inv_trace}
now gives $\tr_{\alpha_u}\alpha\leq C_T$, i.e.
$\alpha_u\geq C_T^{-1}\alpha$, completing the  proof.
\end{proof}

\section{Long time existence of the klt inverse MA flow}
\label{sec:Existence_inv_MA_flow}
In this section, we will prove the long time existence part of Theorem \ref{thm: main_theorem}. We fix a log resolution $\pi: Y \to X$, as described in Section 2.4. Recall that 
 \begin{align*}
 	\hat\mu\coloneqq\pi^*\mu=e^{\psi^+-\psi^-} \, \d V,
 \end{align*}
where $\psi^+,\psi^-$ are qpsh potentials with analytic singularity type and that $e^{-\psi^-}\in L^{p_{0}}$ for some 
\begin{equation}\label{eq: p_0_def_Sec_4}
p_{0}>1,
\end{equation}
that we also fix for the remainder of this section. To simplify our discussion, for this section we will also renormalize the flow of \eqref{eq:pullback_IMAF}: 
\begin{equation}\label{eq: normaliz}
u_t := u_t-t.
\end{equation}
Thus, our main flow system \eqref{eq:pullback_IMAF} will become:
\begin{equation}\label{eq:IMAF-normalized}
\begin{cases}
\frac{1}{V_\omega}(-\dot{u})(\pi^* \omega+\ddc u)^n
 	=e^{-u+L(u) + \psi^+ - \psi^-} \,\d V
& \mbox{in $Y\backslash E\times(0,\infty)$},  \\[1mm]
u_t\in\PSH_{\pi^* \omega} & \mbox{on $Y\times\{t\}$}, \\[1mm]
u  = u_0 & \mbox{on $Y\times\{0\}$}.
\end{cases}
\end{equation}

Our main result of this section will be the following one:
\begin{theorem}
\label{thm:longtime_exist_pullback}
There exists a solution $u\in C_x^{\infty}C_t^1((Y\backslash E)\times(0,\infty))\cap C(Y\times[0,\infty))$ of \eqref{eq:IMAF-normalized} satisfying the following properties: 
\begin{enumerate}
\item[(i)] For any $0<T<\infty$, there are constants $\beta\in(0,1)$ (independent of $T$) and $K_{T}>0$ such that
    \begin{align*}
	|u(x,t)-u(x,t')| \leq K_T|t-t'|^{\beta}
    \quad \text{for $x\in Y$ and $t,t'\in[0,T]$}.
    \end{align*}
\item[(ii)] For any $0<\tau<T<\infty$, there is a constant $K_{\tau,T}>0$ such that
$$
|u(x,t)-u(x,t')| \leq K_{\tau,T}|t-t'|
   \quad \text{for $x\in Y$ and $t,t'\in[\tau,T]$}.
$$
\end{enumerate}
\end{theorem}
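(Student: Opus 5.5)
I will construct the solution as a limit of smooth flows on the resolution $Y$, obtained from Theorem~\ref{thm:alpha_IMAF_exist} applied to perturbed data. Fix a Kähler form $\theta$ on $Y$. For $\eps\in(0,1]$ set
\[
\alpha_\eps:=\pi^*\omega+\eps\theta .
\]
Replace $\psi^\pm$ by the smooth approximants $\psi^\pm_\eps:=\tfrac12\log(e^{2\psi^\pm}+\eps)$ (or a standard Demailly-type regularization), put $f_\eps:=e^{\psi^+_\eps-\psi^-_\eps}$, and take initial data $\pi^*u_0\in\cH_{\alpha_\eps}$. After the shift \eqref{eq: normaliz}, Theorem~\ref{thm:alpha_IMAF_exist} with $\lambda=1$ gives a smooth solution $u^\eps$ on $Y\times[0,\infty)$ of
\[
V_{\alpha_\eps}^{-1}(-\dot u^\eps)\,\alpha_{\eps,u^\eps}^n=e^{-u^\eps+L^\eps(u^\eps)}f_\eps\,\d V .
\]
The whole task is to obtain estimates uniform in $\eps$ and then pass to the limit.

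\textbf{Uniform estimates, in order.}
\begin{enumerate}
\item \emph{$C^0$ bound.} I will redo the proof of Proposition~\ref{prop:alpha_IMAF_C0_est}, now tracking $\eps$. The energy is conserved, so $\sup u^\eps_t\ge E(u_0)-C$. For the minimum principle on $\rho$, note that $f_\eps$ is time independent, so $M_T=0$. This gives $-\dot u^\eps\ge c\,e^{L^\eps-L^\eps_0}\cdot e^{-T}$. The densities $f_\eps\,\d V$ are uniformly in $L^{p_0}$ because $e^{-\psi^-_\eps}\le e^{-\psi^-}$. Together with uniform Skoda integrability, Ko{\l}odziej's estimate then gives $\|u^\eps\|_{L^\infty(Y\times[0,T])}\le C_T$ independently of $\eps$.
\item \emph{Time derivative on $[\tau,T]$.} A lower bound on $\rho$ (equivalently, a lower bound on $-\dot u$ after the shift) comes from Hamilton's trick as above. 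For the upper bound on $-\dot u^\eps$, i.e.\ part (ii), I will use a maximum principle for a quantity such as $t\rho-\log(u+C)$, or for $t\dot u$ combined with $u$. The factor $t$ removes the dependence on the initial $\rho_0$, which is not uniform, and yields $|\dot u^\eps|\le K_{\tau,T}$ on $[\tau,T]$. This is the parabolic analogue of the second half of Proposition~\ref{alpha_IMAF_C2_est}, which did not use curvature.
\item \emph{Hölder continuity in time on $[0,T]$.} For part (i), the bound from step 2 degenerates like $1/t$ near $t=0$. I will instead compare $u^\eps_t$ with barriers of the form $u_0\pm Ct^\beta$. Alternatively, I will interpolate: apply the Ko{\l}odziej stability estimate to $u^\eps_t$ versus $u^\eps_s$, whose MA densities differ in $L^1$ by $O(|t-s|)$ after integration in time. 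That estimate gives an $L^\infty$ modulus $|t-s|^{\beta}$, with $\beta$ depending only on $n$ and $p_0$. This explains why $\beta$ does not depend on $T$.
\item \emph{Local smooth estimates on $Y\setminus E$.} These are the main obstacle. I plan the Păun/Tsuji trick: apply Proposition~\ref{alpha_IMAF_C2_est}'s Chern--Lu/Schwarz argument to $Q=\log\tr_\theta\alpha_{\eps,u}-A u+\delta\log|s_E|^2$. Here $\theta-\delta\,\ddc\log|s_E|^2$ is Kähler for a suitable ample combination of exceptional divisors, and the term $\Delta\log f_\eps$ is absorbed using that $\psi^\pm_\eps$ are quasi-psh with uniform lower Hessian bounds. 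The $\psi^-$ contribution enters with the good sign, while the $\psi^+$ contribution needs the $|s_E|$ weight. This gives $C^{-1}|s_E|^{2\delta'}\theta\le\alpha_{\eps,u}\le C|s_E|^{-2\delta}\theta$ on $[\tau,T]$. Then parabolic Evans--Krylov and Schauder estimates give local $C^k$ bounds on compact subsets of $(Y\setminus E)\times(0,\infty)$.
\end{enumerate}

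\textbf{Passage to the limit.} By Arzelà--Ascoli and a diagonal argument, a subsequence $u^{\eps_j}$ converges uniformly on $Y\times[0,T]$ and locally smoothly on $(Y\setminus E)\times(0,\infty)$. The uniform convergence uses steps 1 and 3 together with a uniform spatial modulus of continuity from Ko{\l}odziej's stability estimate; the local smooth convergence uses step 4.

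The limit $u$ is $\pi^*\omega$-psh, and it satisfies \eqref{eq:IMAF-normalized} on $Y\setminus E$. For this one needs $L^{\eps_j}(u^{\eps_j})\to L(u)$, which holds by dominated convergence because $e^{\psi^+_\eps-\psi^-_\eps}\le Ce^{-\psi^-}\in L^{p_0}$ and the $u^{\eps_j}$ are uniformly bounded. The estimates in (i) and (ii) survive the limit. The main difficulty I expect is step 2/3, namely getting control of $\dot u$ uniform in $\eps$ near the singular locus without curvature input; if a global maximum principle fails there, I would fall back on the stability-estimate approach.
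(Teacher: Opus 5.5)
\textbf{Main gap: step~2.} Step~2 is the $\eps$-uniform upper bound on $w=-\dot u^\eps$ on all of $Y\times[\tau,T]$. This is exactly (ii), and you also need it for the lower bound on $\alpha_{\eps,u}$ in step~4. The second half of Proposition~\ref{alpha_IMAF_C2_est} uses no curvature, but it closes only through the AM--GM step $\tr_{\alpha_u}\alpha\geq n(\alpha^n/\alpha_u^n)^{1/n}\geq C^{-1}e^{\rho/n}$. In your setting the flow equation gives $\alpha_\eps^n/\alpha_{\eps,u}^n=w\,e^{u-L}\,\alpha_\eps^n/(V_{\alpha_\eps}f_\eps\,\d V)$. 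So you would need $f_\eps\,\d V\leq C\alpha_\eps^n$ with $C$ independent of $\eps$, and that is false: $\alpha_\eps^n\leq C\eps\,\d V$ along $E$, whereas your regularization gives $f_\eps\geq c\,\eps^{1/2}$ everywhere (and $f_\eps\to\infty$ near $E^-$). A factor $t$ in front of $\rho$ only neutralizes the non-uniform initial value; it does nothing about this degeneracy. Barriers singular along $E$ would give only local bounds, not (ii) on $Y$.

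\textbf{The missing idea.} This is how Proposition~\ref{prop:local_t-deriv_upper} is proved: change the reference metric.
\begin{itemize}
\item Solve $\hat\alpha_\eps^n=(V_{\alpha_\eps}/m_\eps)f_\eps\,\d V$ with $\hat\alpha_\eps=\alpha_\eps+\ddc\varphi_\eps$ and $m_\eps=\int_Yf_\eps\,\d V$. The uniform Ko{\l}odziej estimate gives $\|\varphi_\eps\|_{L^\infty}\leq C$.
\item Run the maximum principle on $Q=w/(1+U_T+u-\varphi_\eps)$, where $U_T$ is the uniform $C^0$ bound. Now $f_\eps$ cancels: $\hat\alpha_\eps^n/\alpha_{\eps,u}^n=we^{u-L}/m_\eps$, so $\tr_{\alpha_{\eps,u}}\hat\alpha_\eps\geq c_Tw^{1/n}$.
\item Together with $(\partial_t-w\Delta_{\alpha_{\eps,u}})w=w(w+\dot L)$, this gives $M'\leq A_TM^2-c_TM^{2+1/n}$ for $M=\max_YQ$, hence $w\leq C_Tt^{-n/(n+1)}$.
\end{itemize}
This rate is integrable at $t=0$ and yields (i) with $\beta=1/(n+1)$ by integration. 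It therefore also replaces your step~3, whose claim that the Monge--Amp\`ere densities of $u^\eps_t$ and $u^\eps_s$ are $O(|t-s|)$-close in $L^1$ is unsupported.

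\textbf{Step 4 has the signs reversed.} $\tr_\theta\Ric(\alpha_{\eps,u})$ contains $-\Delta_\theta\psi^+_\eps+\Delta_\theta\psi^-_\eps$, and the quasi-psh lower Hessian bound controls the first term, not the second. The term $\Delta_\theta\psi^-_\eps$ is unbounded above as $\eps\to0$. The weight $\delta\log|s_E|^2$ cannot absorb it, since that weight contributes only $O(\delta\,\tr_{\alpha_{\eps,u}}\theta)$. You must put $\psi^-_\eps$ itself into the test function and use $\Delta_\theta\psi^-_\eps\leq\tr_{\alpha_{\eps,u}}(B_0\theta+\ddc\psi^-_\eps)\,\tr_\theta\alpha_{\eps,u}$, as in Proposition~\ref{prop:logtr_omega_Y}. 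The outcome is $\tr_\theta\alpha_{\eps,u}\leq Ce^{-\psi^-_\eps}|s_{E_0}|^{-C}$, not a pure $|s_E|$-power.

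\textbf{Uniform convergence on $Y$ is unjustified, and unnecessary.} You have no modulus of continuity on $Y$ uniform in $\eps$: Ko{\l}odziej's stability and H\"older estimates are for a fixed K\"ahler class, and the classes $\alpha_\eps$ degenerate. You do not need it:
\begin{itemize}
\item $L^1$-limits of the $u^\eps_t$ are $\pi^*\omega$-psh and coincide with the local smooth limit on $Y\setminus E$.
\item (i) and (ii) extend to points of $E$ by taking $\limsup_{y\to x}$ over $y\in Y\setminus E$.
\item Continuity of $u_t$ follows from \cite[Theorem~A]{EGZ09} once the density $V_\omega e^{-u_t+L}g/w$, with $g=e^{\psi^+-\psi^-}$, is in $L^r$.
\end{itemize}

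\textbf{Step 1 is a valid, simpler alternative, but one constant must be justified.} Step~1 replaces the paper's Chen--Cheng auxiliary-flow $C^0$ argument, and its bound $w\geq c_T$ makes the continuity point above immediate. You must, however, prove that $c=e^{\inf_Y\rho^\eps(0)}$ is uniform in $\eps$. It follows from three facts: $\omega^n\leq C\mu$ on $X$, $\alpha_{\eps,u_0}^n\leq C(\pi^*\omega^n+\eps\,\d V)$, and $e^{\psi^+_\eps}\geq\eps^{1/2}$. It can fail for other regularizations.
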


\subsection{The perturbed flow}
The strategy of proving Theorem \ref{thm:longtime_exist_pullback}
is to  perturb the semiample class $[\pi^* \omega]$ to a K\"ahler class, and at the same time perturb the singular weights $\psi^\pm$  to become smooth. Then we will obtain apriori estimates for the resulting perturbed system that are preserved in the limit.  \smallskip

Let $E_0$ be an effective exceptional divisor such that $A\coloneqq -\pi^*K_X-\gamma E_0$ is $\Bbb Q$-ample for some small $\gamma>0$ rational. 
We fix a K\"ahler metric $\omega_Y\in c_1(A)$ and a Hermitian metric $h_{0}$ on $E_{0}$. Then there exists a smooth function $\psi'$ such that
\[
\omega_{Y} = \pi^* \omega-\gamma R_{h_{0}}+\ddc\psi'.
\]
By Poincar\'e-Lelong formula $\ddc\log|s_{E_0}|^2_{h_0}=[E_0]-R_{h_{0}}$, we see that
\begin{align*}
\pi^* \omega+\ddc(\psi'+\gamma\log|s_{E_0}|_{h_0}^2) = \omega_Y+\gamma[E_0]
\end{align*}
is a K\"ahler current. So 
$$\psi:=\psi'+\gamma\log|s_{E_0}|_{h_0}^2$$ 
is a $\pi^*\omega$-plurisubharmonic function such that
\begin{align}
\label{eq:piomega-omega_Y}
\pi^* \omega+\ddc\psi=\omega_Y+\gamma[E_0].
\end{align}
Now we construct the perturbed flow. 

By Demailly's regularization, there exist a constant \(B_0>0\) and
sequences
\[
\psi_k^\pm\in\operatorname{PSH}(Y,B_0\omega_Y)\cap C^\infty(Y)
\]
such that \(\psi_k^\pm\searrow\psi^\pm\), with convergence in the
\(C^\infty\)-topology on relatively compact subsets of
\(\operatorname{Amp}([\pi^*\omega])\). Set
\[
\varepsilon_k\coloneqq \frac1k,
\qquad
g_{\varepsilon_k}\coloneqq e^{\psi_k^+-\psi_k^-}.
\]
When it will not cause ambiguity, we will drop the subscript $k$ from $\varepsilon_k$. 

Then \(g_{\varepsilon_k}\to g\) in \(L^{p_0}(Y,\mathrm dV)\). Note that $|\sup_Y\psi_k^{\pm}|$ is bounded independently of $k$, while we have for all $k$,
 \begin{align*}
 	\|e^{-\psi_k^-}\|_{L^{p_{0}}}
 	\leq \|e^{-\psi^-}\|_{L^{p_0}}.
 \end{align*}
We introduce the perturbed background K\"ahler metric:
$$\omega_\eps\coloneqq\pi^* \omega+\eps\omega_Y.$$ 
Since $u_0\in\cH_{\pi^* \omega}$, then $u_0\in\cH_{\omega_\eps}$. By Theorem \ref{thm:alpha_IMAF_exist}, using the normalization \eqref{eq: normaliz}, there is a smooth solution of the following perturbed normalized inverse MA flow:
 \begin{align}
 \label{eq:eps_IMAF} 
 	\begin{cases}
V_{\omega_\eps}^{-1}(-\dot{u}^\eps)(\omega_\eps+\ddc{u}^\eps)^n
 	=e^{-u^\eps+L_\eps(u^\eps)} g_\eps \, \d V
& \mbox{in $Y\times[0,\infty)$},  \\[1mm]
u^{\eps}_t\in\PSH_{\omega_{\eps}} & \mbox{on $Y\times\{t\}$}, \\[1mm]
u^\eps_0 = u_0 & \mbox{on $Y\times\{0\}$},
\end{cases}
 \end{align}
where 
\begin{align*}
V_{\omega_{\eps}} \coloneqq \int_{Y}\omega_{\eps}^{n}
\quad\text{and}\quad 
L_\eps(u^\eps)\coloneqq -\log\int_Y e^{-u^\eps} g_\eps \, \d V.
\end{align*}

\subsection{Uniform estimates}

For a fixed $T>0$, we consider the restriction of \eqref{eq:eps_IMAF} on $Y\times[0,T]$:
\begin{align}
\label{eq:eps_IMAF T} 
 	\begin{cases}
V_{\omega_\eps}^{-1}(-\dot{u}^\eps)(\omega_\eps+\ddc{u}^\eps)^n
 	=e^{-u^\eps+L_\eps(u^\eps)} g_\eps \,  \d V
& \mbox{in $Y\times[0,T]$},  \\[1mm]
u^{\eps}_t\in\PSH_{\omega_{\eps}} & \mbox{on $Y\times\{t\}$}, \\[1mm]
u^\eps_0 = u_0 & \mbox{on $Y\times\{0\}$},
\end{cases}
 \end{align}

In this subsection, we will establish several estimates of $u^{\eps}$. Note that all these estimates are uniform (i.e. independent of $\eps$). For ease of notation, in Section \ref{sec:Existence_inv_MA_flow}, a constant is said to be uniform if it is independent of $\eps$.  Sometimes, we use subscript to emphasize its dependence (e.g. $K_{\theta}$ depends on $\theta$).

\begin{proposition}[Uniform Skoda integrability]
\label{prop: uniform_skoda_eps}
For any $q>1$, there is a uniform constant $K_{q}$ (independent of $\varepsilon$) such that
\begin{align*}
\int_Y e^{-qu^{\eps}_t} \d V \leq K_q, \ \ t \in [0,T].
\end{align*}
\end{proposition}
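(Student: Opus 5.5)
The plan mirrors the argument for \eqref{alpha-IMAF C0 eqn 3} in Proposition~\ref{prop:alpha_IMAF_C0_est}, with every constant made independent of $\eps$. The approach has two ingredients: a uniform lower bound on $\sup_Y u^\eps_t$, coming from the conservation of Monge--Amp\`ere energy, and a uniform Skoda estimate for a compact family of quasi-psh functions with respect to one fixed K\"ahler form.

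\emph{Step 1: energy conservation.} Let $E_\eps$ denote the Monge--Amp\`ere energy with respect to $\omega_\eps$. Along \eqref{eq:eps_IMAF T} we have
\[
\frac{\d}{\d t}E_\eps(u^\eps_t)=V_{\omega_\eps}^{-1}\int_Y \dot u^\eps (\omega_\eps+\ddc u^\eps)^n=-\int_Y e^{-u^\eps+L_\eps(u^\eps)}g_\eps\,\d V=-1,
\]
because of the renormalization \eqref{eq: normaliz}. Hence $E_\eps(u^\eps_t)=E_\eps(u_0)-t$. Since $u_0$ is smooth and $\omega_\eps\to\pi^*\omega$ smoothly with $\eps\le 1$, the value $E_\eps(u_0)$ is bounded uniformly. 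Combined with $E_\eps(v)\le\sup_Y v$, this gives
\[
\sup_Y u^\eps_t\ge E_\eps(u_0)-T\ge -C_T
\]
uniformly in $\eps$ and in $t\in[0,T]$. In fact the bound $\sup_Y u^\eps_t\le \sup_Y u_0$ also holds, since $\dot u^\eps<0$, though we only need the lower bound.

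\emph{Step 2: uniform Skoda.} Every $u^\eps_t$ is $\omega_\eps$-psh, and $\omega_\eps\le \pi^*\omega+\omega_Y\le C\omega_Y$. So all $u^\eps_t$ lie in $\PSH(Y,C\omega_Y)$ for one fixed K\"ahler form $C\omega_Y$ on the smooth manifold $Y$. The normalized family
\[
\mathcal F:=\{v\in\PSH(Y,C\omega_Y):\sup_Y v=0\}
\]
is compact in $L^1$ by \cite{GZ07}. Zeriahi's uniform Skoda theorem \cite[Corollary 3.2]{Zer01} then gives, for each $q>1$, a constant $C_q$ with $\int_Y e^{-q v}\,\d V\le C_q$ for all $v\in\mathcal F$. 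This holds because $C\omega_Y$-psh functions on $Y$ have uniformly bounded Lelong numbers, and one can rescale $C$ if needed so that $q$ times those Lelong numbers is small. Applying this to $v=u^\eps_t-\sup_Y u^\eps_t$ yields
\[
\int_Y e^{-qu^\eps_t}\,\d V\le e^{-q\sup_Y u^\eps_t}C_q\le e^{qC_T}C_q=:K_q.
\]

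\emph{Main obstacle.} The delicate point is the uniform Skoda step. The relevant classes $[\omega_\eps]$ degenerate as $\eps\to0$, so the estimate must be run with respect to the fixed dominating form $C\omega_Y$ rather than $\omega_\eps$. One also has to check that Lelong numbers of $C\omega_Y$-psh functions are bounded by a constant depending only on $C\omega_Y$. This is standard: the Lelong number is bounded by the mass of $C\omega_Y+\ddc v$, which is cohomological. For large $q$ one then uses the version of Zeriahi's theorem valid whenever $q\,\nu(v,x)<2$ uniformly. If that fails for large $q$, because the Lelong numbers of $C\omega_Y$-psh functions may be too large, the fallback is to bound $\int e^{-qu}$ via $\int e^{-qu}\le \sup$-normalized estimates over the $\pi^*\omega+\omega_Y$ class. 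On $Y$ one can also argue directly: Lelong numbers of psh functions in a fixed class have a uniform bound, and Skoda's integrability with exponent $q$ is only guaranteed for $q<2/\nu$. To reach arbitrary $q$, I would instead pass through $X$. Since $\pi^*\omega+\eps\omega_Y$-psh functions are, up to a fixed $\eps$-independent shift, dominated by potentials descending to $X$, I would use the fact that $\pi^*\omega$-psh functions have zero Lelong numbers off $E$ (being pullbacks from the smooth ambient $\mathbb{P}^N$ restriction) to obtain all $q$. Concretely, I would bound $u^\eps_t\ge$ an $\omega_\eps$-envelope, but this refinement is the point to verify carefully.
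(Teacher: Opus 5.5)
\textbf{Genuine gap: your Step 2 does not give the statement for all $q>1$.} Step 1 is fine. Zeriahi's uniform Skoda theorem for a compact family $\mathcal F$ of quasi-psh functions only gives
\[
\sup_{v\in\mathcal F}\int_Y e^{-qv}\,\d V<\infty \quad\text{for } q<2/\nu(\mathcal F),
\]
where $\nu(\mathcal F)$ is the supremum of the Lelong numbers of members of $\mathcal F$. For $\mathcal F=\{v\in\PSH(Y,C\omega_Y):\sup_Y v=0\}$ this supremum is strictly positive. For example, take $\frac{C}{m}\log|s|_h^2$ minus its supremum, where $s$ is a holomorphic section of $mA$ and $h$ has curvature $m\omega_Y$. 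For such $v$, $\int_Y e^{-qv}\,\d V=\infty$ once $q$ is large. Enlarging $C$ only enlarges the admissible Lelong numbers, so rescaling cannot help.

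Large $q$ is genuinely needed. In Lemma~\ref{eq:unif-Lp} the exponent is $p(p_0/p)^*=pp_0/(p_0-p)$, which is large when $p_0$ is close to $1$.

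Your fallback does not repair this. $\pi^*\omega$-psh functions need not have zero Lelong numbers off $E$. For instance, restrict $\frac1p\log(|\ell|^2/|Z|^2)$ to $X$, where $\{\ell=0\}$ is a hyperplane through a regular point of $X$; it is $\omega$-psh and has a positive Lelong number there. The envelope comparison you mention is also never specified.

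\textbf{The missing idea: the energy bound is what controls Lelong numbers.} You computed $E_\eps(u^\eps_t)=E_\eps(u_0)-t$ but used it only to bound $\sup_Y u^\eps_t$ from below. The paper keeps both
\[
\sup_Y u^\eps_t\le\sup_Y u_0 \quad\text{and}\quad E_\eps(u^\eps_t)\ge E_\eps(u_0)-T,
\]
and proves Lemma~\ref{lem: uniform_skoda} by contradiction, as follows.

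\begin{enumerate}
\item Suppose $\eps_j\to\eps_\infty\in[0,1]$, $v_j\to v$ in $L^1$, and $\int_Y e^{-qv_j}\,\d V\to\infty$.
\item Show that the energy bound passes to the limit, $E_{\eps_\infty}(v)\ge -B$. This takes some care because the reference form varies. On the truncations $\max(v_j,-k)$, compare $E_{\eps_j}$ with $E_{\eps_\infty+a}$; they differ by $O_k(|\eps_j-\eps_\infty-a|)$. Then use upper semicontinuity of $E_{\eps_\infty+a}$ under $L^1$ convergence, and let $a\to0$ and $k\to\infty$.
\item Finite-energy potentials have zero Lelong numbers \cite[Theorem~1.1]{BBEGZ19}, so $e^{-qv}$ is integrable for every $q$.
\item The Demailly--Koll\'ar semicontinuity theorem \cite{DK01} then forces $\int_Y e^{-qv_j}\,\d V\to\int_Y e^{-qv}\,\d V<\infty$, a contradiction.
\end{enumerate}

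This handles the degeneration of $[\omega_\eps]$ through the energy bound itself, rather than through a fixed dominating K\"ahler form. The latter route can only ever reach $q$ below a fixed threshold.
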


\begin{proof}
By $\dot{u}<0$, we have $\sup_{Y}u^{\eps}_t\leq \sup_{Y}u_{0}$. On the other hand, by the flow equation \eqref{eq:eps_IMAF T} 
\[
\frac{\d}{\d t}E_{\eps}(u^{\eps}_t)
= \frac{1}{V_{\omega_{\eps}}}
\int_Y\dot{u}_t^{\eps}(\omega_{\eps}+\ddc u^{\eps}_t)^n
= -\int_Y e^{-u_t+L_{\eps}(u^{\eps}_t)}g_{\eps}\,\d V = -1,
\]
and so $E_{\eps}(u^{\eps}_t)\geq E_{\eps}(u_{0})-T\geq-K$. Proposition \ref{prop: uniform_skoda_eps} then follows from Lemma \ref{lem: uniform_skoda} below.
\end{proof}

\begin{lemma}
\label{lem: uniform_skoda}
For any $B>0$ and $q>1$, there is a uniform constant $K_{B,q}$ (independent of $\varepsilon$) such that for any $v\in\cE^1_{\omega_\eps}$ satisfying $\sup_{Y}v\leq B$ and $E_\eps(v)\geq-B$,
\begin{align*}
\int_Y e^{-qv} \d V \leq K_{B,q}.
\end{align*}
\end{lemma}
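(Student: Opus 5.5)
The plan is to reduce Lemma~\ref{lem: uniform_skoda} to Zeriahi's uniform Skoda theorem \cite[Corollary 3.2]{Zer01} for compact families of quasi-psh functions with respect to one fixed K\"ahler form. The only hard part is controlling Lelong numbers uniformly as $\eps\to0$.

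\emph{Step 1: a fixed reference form and $L^1$-compactness.} Since $\eps\le1$, we have $\omega_\eps\le\omega_1=\pi^*\omega+\omega_Y$, which is K\"ahler on $Y$. Hence every $v$ in the family
\[
\mathcal S_B:=\bigl\{v\in\cE^1_{\omega_\eps}\ :\ \eps\in(0,1],\ \sup_Y v\le B,\ E_\eps(v)\ge -B\bigr\}
\]
is $\omega_1$-psh. The normalization $E_\eps(v)\le\sup_Y v$ gives $-B\le\sup_Y v\le B$. By the weak compactness result of \cite{GZ07} (cf.\ \cite[Lemma 4.13]{Dar19}), $\mathcal S_B$ is therefore relatively compact in $L^1(Y)$. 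Let $\mathcal K$ be its $L^1$-closure, a compact family in $\PSH(Y,\omega_1)$.

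\emph{Step 2: Zeriahi's theorem.} The uniform Skoda theorem states that $\sup_{w\in\mathcal K}\int_Y e^{-q w}\,\d V<\infty$ whenever $q<2/\nu(\mathcal K)$, where $\nu(\mathcal K)$ is the supremum of the Lelong numbers of elements of $\mathcal K$. It therefore suffices to show that every $w\in\mathcal K$ has identically zero Lelong numbers, since then all $q>1$ are allowed and we obtain the constant $K_{B,q}$.

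\emph{Step 3: zero Lelong numbers of limits (the main obstacle).} Let $w=\lim v_j$ in $L^1$ with $v_j\in\mathcal S_B$ and $v_j\in\cE^1_{\omega_{\eps_j}}$, and pass to a subsequence so that $\eps_j\to\eps_\infty\in[0,1]$. Then $w$ is $\omega_{\eps_\infty}$-psh, with $\omega_0=\pi^*\omega$ semipositive and big. I would show that
\[
E_{\eps_\infty}(w)\ \ge\ \limsup_j E_{\eps_j}(v_j)-C\ \ge\ -B-C.
\]
To do this I would expand the unnormalized energy $(n+1)V_{\omega_\eps}E_\eps(v)$, after shifting so that $v\le0$, as a polynomial in $\eps$ whose coefficients are mixed energies $\int_Y v\,\omega_v^{a}\wedge\omega_Y^{b}\wedge(\pi^*\omega)^{c}$, each of which is nonpositive. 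I would then use upper semicontinuity of these mixed energies along $L^1$-convergent sequences, proved via decreasing regularization and monotonicity. If this joint upper semicontinuity in $(\eps,v)$ resists a direct proof, the fallback is to show only that $\int_Y(-w)\,(\omega_{\eps_\infty}+\ddc w)^n_{\mathrm{np}}<\infty$ together with full mass. This is all that the next step uses.

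\emph{Step 4: conclusion.} Granted Step 3, $w\in\cE^1_{\omega_{\eps_\infty}}$. Finite-energy potentials in a big (semipositive) class have zero Lelong numbers at every point \cite{BEGZ10}, since full Monge--Amp\`ere mass forbids positive Lelong numbers even in the big case. Because $\omega_{\eps_\infty}\le\omega_1$, these are also the Lelong numbers of $w$ regarded as an $\omega_1$-psh function. Hence $\nu(\mathcal K)=0$, and Step 2 yields the uniform bound $\int_Y e^{-qv}\,\d V\le K_{B,q}$ for all $v\in\mathcal S_B$.

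I expect the delicate point to be the uniformity as $\eps_j\to0$ in Step 3. The reference forms degenerate there, so one cannot simply compare $E_\eps$ with the fixed energy $E_{\omega_1}$; the nonpositivity of the mixed terms and their upper semicontinuity are what should carry the argument.
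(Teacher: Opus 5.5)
Your outer architecture is sound and agrees in substance with the paper's. Both arguments use $L^1$-compactness, show that every $L^1$-limit has zero Lelong numbers because it has finite energy, and finish with a semicontinuity-of-integrability theorem. You apply Zeriahi's uniform Skoda theorem to the closure $\mathcal K$; the paper argues by contradiction and applies Demailly--Koll\'ar semicontinuity to a single limit, which amounts to the same thing.

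The gap is Step~3, the only nontrivial point, and the mechanism you propose there does not work as stated. For $(n+1)V_{\omega_\varepsilon}E_\varepsilon(v)$ to be a polynomial in $\varepsilon$ with $\varepsilon$-independent coefficients, the current paired with $v$ has to be $\pi^*\omega+\ddc v$. For $v$ merely $\omega_\varepsilon$-psh this current is only $\geq-\varepsilon\omega_Y$. So its powers are not defined as non-pluripolar products when $v$ is unbounded, and even for bounded $v$ the sign of the mixed terms is no longer controlled by $v\leq 0$.

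If instead $\omega_v$ means $\omega_\varepsilon+\ddc v$, the terms are nonpositive but depend on $\varepsilon$. You are then back to the joint upper semicontinuity in $(\varepsilon,v)$ that you flag yourself. The usual proof of upper semicontinuity (pass to $(\sup_{k\geq j}v_k)^*$, then use monotonicity and continuity along decreasing sequences) does not give this, for three reasons:
\begin{enumerate}
\item individual mixed terms are not monotone in $v$;
\item the envelopes are only $\omega_{\sup_{k\geq j}\varepsilon_k}$-psh;
\item for $v\leq 0$ the unnormalized energy can only decrease when the reference form is enlarged, which is the wrong direction for transporting the lower bound $E_{\varepsilon_j}(v_j)\geq -B$ to a common larger class.
\end{enumerate}
Your fallback simply restates what has to be proved.

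The missing idea is to truncate before changing the reference form. Set $v_j^{(k)}:=\max(v_j,-k)$ and $w^{(k)}:=\max(w,-k)$, and fix $a>0$.
\begin{enumerate}
\item For $j$ large, $\omega_{\varepsilon_j}\leq\omega_{\varepsilon_\infty+a}$, so all $v_j^{(k)}$ are psh for the single form $\omega_{\varepsilon_\infty+a}$.
\item Because $|v_j^{(k)}|\leq k$, multilinearity of the Bedford--Taylor products gives $|E_{\varepsilon_j}(v_j^{(k)})-E_{\varepsilon_\infty+a}(v_j^{(k)})|\leq C_k|\varepsilon_j-\varepsilon_\infty-a|$.
\item Monotonicity gives $E_{\varepsilon_j}(v_j^{(k)})\geq E_{\varepsilon_j}(v_j)\geq-B$. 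Since $v_j^{(k)}\to w^{(k)}$ in $L^1$, the ordinary upper semicontinuity of the one functional $E_{\varepsilon_\infty+a}$ then yields $-B\leq E_{\varepsilon_\infty+a}(w^{(k)})+C_ka$.
\item Let $a\to0$, which is harmless because $w^{(k)}$ is bounded. Then let $k\to\infty$, using continuity of $E_{\varepsilon_\infty}$ along decreasing sequences. This gives $E_{\varepsilon_\infty}(w)\geq-B$, with no loss of a constant.
\end{enumerate}
With this in place of your Step~3, the rest of your argument goes through.
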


\begin{proof}
Without loss of generality, we assume that $\sup_{Y}v\leq 0$ and $E_\eps(v)\geq-B$. To prove Lemma~\ref{lem: uniform_skoda}, we argue by contradiction. Suppose that there are two sequences $\{\eps_{j}\}$ and $\{v_{j}\}$ such that
\[
v_{j}\in\cE^1_{\omega_{\eps_j}}, \quad
\sup_{Y}v_{j} \leq 0, \quad
E_{\eps_{j}}(v_{j}) \geq -B,
\]
and
\[
\int_{Y}e^{-qv_{j}} \d V \to \infty.
\]
By passing to a subsequence, we can assume that
\[
\eps_{j} \to \eps\in[0,1] 
\quad\text{and}\quad
v_{j} \xrightarrow{L^{1}} v \in \textup{PSH}_{\omega_{\eps}}.
\]

We claim: $E_{\eps}(v)\geq-B$. 

Given this claim, it follows from \cite[Theorem~1.1]{BBEGZ19} that $v$ has zero Lelong numbers. By Demailly-Koll\'ar's semi-continuity theorem for log-canonical thresholds \cite[Main Theorem 0.2 (2)]{DK01} (c.f. \cite{PS00}) we have
\[
\lim_{j\to\infty}\int_{Y}e^{-qv_{j}}\d V 
= \int_{Y}e^{-qv} \d V < \infty,
\]
which is a contradiction.

Next we prove the claim. Consider the bounded cutoffs:
\begin{align*}
v_j^{(k)} := \max(v_{\eps_j}, -k)
\quad\text{and}\quad
v^{(k)} := \max(v,-k).
\end{align*}
Since $E_\varepsilon$ is continuous under decreasing limits \cite{BEGZ10}, it suffices to show that $E_{\eps}(v^{(k)})\geq-B$. Fix a small $0 <  a< 1$ such that $\eps_j \leq \eps + a$ for all $j$ sufficiently large. Then $\omega_{\eps_j}\leq\omega_{\eps+a}$ and $v_j^{(k)} \in \textup{PSH}(Y, \omega_{\eps+a})$ for all large $j$.
Note that for $s\in[\eps_j,\eps+a]$,
\begin{align*}
	E_{s}(v_j^{(k)})
	=\frac{1}{(n+1) V_{\omega_s}}\sum_{l =0}^{n}\int_Y v_j^{(k)}(\omega_{s}+r\ddc v_j^{(k)})^l \wedge \omega_s^{n-l} 
\end{align*}
Since $|v^{(k)}_j,v^{(k)}| < k$, by multilineary of non-pluripolar products we obtain that
\begin{align*}
	|E_{\eps_j}(v_j^{(k)}) - E_{\eps+a}(v_j^{(k)}) |
	\leq C_k|\eps_j-(\eps+a)|.
\end{align*}
Thus, we have 
\begin{align*}
	E_{\eps_j}(v_j^{(k)}) \leq E_{\eps+a}(v_j^{(k)}) + C_k|\eps_j - ( \eps+a)|,
\end{align*}
which yields, by taking the $\limsup$ as $j \to \infty$,
\begin{align*}
	\limsup_{j\rightarrow\infty} E_{\eps_j}(v_j^{(k)}) 
	\leq \limsup_{j\rightarrow\infty} E_{\eps+a}(v_j^{(k)}) + C_ka
	\leq E_{\eps+a}(v^{(k)}) + C_ka.
\end{align*}
Letting $a\rightarrow0$ and using $E_{\eps_j}(v_j^{(k)})\geq E_{\eps_j}(v_j)\geq-B$, we obtain $E_{\eps}(v^{(k)}) \geq-B$, as required.
\end{proof}

Following Chen--Cheng's argument \cite[\S 2.2]{CC23} (building on \cite{GPT23}), we next establish the uniform $C^{0}$ estimate and some related stability estimates that we will need later.

\begin{proposition}[$C^{0}$ estimate]
\label{prop:unif_Linfty}
	There is a uniform constant $K=K_T$  (independent of $\varepsilon$) such that
    \begin{align*}
    	\sup_{Y\times[0,T]}|u^\eps|\leq K.
    \end{align*}
\end{proposition}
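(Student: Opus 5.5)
The upper bound is immediate. Since $\dot u^\eps<0$, we have $u^\eps\leq\sup_Y u_0$ on $Y\times[0,T]$. The work is in the lower bound for $\inf_Y u^\eps_t$. The plan is to read the flow at each fixed time as a complex Monge--Amp\`ere equation
\[
(\omega_\eps+\ddc u^\eps_t)^n=F_{\eps,t}\,\d V,\qquad F_{\eps,t}:=V_{\omega_\eps}\,e^{-u^\eps_t+L_\eps(u^\eps_t)}\,\frac{g_\eps}{-\dot u^\eps_t},
\]
with respect to the degenerating family $\omega_\eps=\pi^*\omega+\eps\omega_Y$. We would then apply the Guo--Phong--Tong / Chen--Cheng $L^\infty$ estimate (\cite[\S2.2]{CC23}) in the form that is uniform for families $\pi^*\omega\leq\omega_\eps\leq\pi^*\omega+\omega_Y$ of semipositive, big forms. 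That estimate bounds $\sup u-\inf u$ in terms of an upper bound on an $L^{p}\log L$ or entropy-type norm of $F_{\eps,t}$ with some $p>1$, and the normalized volume of $\omega_\eps$, which is uniformly controlled. Together with $\sup u\leq\sup u_0$ and $E_\eps(u^\eps_t)\geq E_\eps(u_0)-T$ (which prevents $\sup u$ from drifting to $-\infty$), this gives the claim.

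\textbf{Bounds on $L_\eps$ and the density.} First, $L_\eps(u^\eps_t)$ is bounded uniformly. From above, $\int e^{-u}g_\eps\,\d V\geq e^{-\sup u_0}\int g_\eps\,\d V\geq c>0$, since $g_\eps\to g$ in $L^{p_0}$. From below, H\"older's inequality and Proposition~\ref{prop: uniform_skoda_eps} bound $\int e^{-u}g_\eps\,\d V\leq\|e^{-u}\|_{L^{q}}\|e^{\psi^+_k}\|_{L^\infty}\|e^{-\psi_k^-}\|_{L^{p_0}}$ with $1/q+1/p_0=1$. By the same H\"older argument, $e^{-u}g_\eps$ is uniformly bounded in $L^{p}$ for every $p<p_0$. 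It therefore remains to control the factor $1/(-\dot u^\eps)$, i.e.\ to bound $\rho:=\log(-\dot u^\eps)$ from below, but only relative to the singular weight $g_\eps$.

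\textbf{Lower bound on $\rho$.} Differentiating the logarithm of \eqref{eq:eps_IMAF T} in $t$ gives
\[
\dot\rho=\Delta_{\omega_{\eps,u}}e^\rho+e^\rho+\tfrac{\d}{\d t}L_\eps.
\]
Hamilton's trick then yields $\rho-L_\eps\geq\min_Y(\rho_0-L_\eps(u_0))$. However, $\min_Y\rho_0$ contains $\min_Y\log g_\eps$, which tends to $-\infty$, so this naive bound is useless. Instead we apply the minimum principle to
\[
G:=\rho+u^\eps-L_\eps-\log g_\eps=\log\frac{V_{\omega_\eps}\,\d V}{\omega_{\eps,u}^n}.
\]
Its initial value is uniformly bounded, since $u_0\in\cH_{\omega_\eps}$ for all $\eps$. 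It evolves by $\dot G=\Delta_{\omega_{\eps,u}}e^\rho$. At a spatial minimum of $G$, we write $\rho=G-u+L_\eps+\log g_\eps$ and use that $\psi_k^{\pm}$ are $B_0\omega_Y$-psh. This gives $\Delta\log g_\eps\geq -\Delta\psi_k^-\geq\dots$, and hence a differential inequality of the form $\tfrac{\d}{\d t}G_{\min}\geq -Ce^{\rho}(1+\tr_{\omega_{\eps,u}}\omega_Y)$. The trace term must then be absorbed using the $-\Delta u$ contribution, which equals $\tr_{\omega_{\eps,u}}\omega_\eps-n$. Absorbing $\tr_{\omega_{\eps,u}}\omega_Y$ this way might fail, because $\omega_\eps$ does not dominate $\omega_Y$ uniformly. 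If so, I would replace $u$ in $G$ by $u-\delta\psi$, using \eqref{eq:piomega-omega_Y} (so $\pi^*\omega+\ddc\psi\geq\omega_Y$ off $E_0$), at the price of a singular weight $e^{\delta\psi}$ that is still integrable. If $G$ stays bounded below, then $\omega_{\eps,u}^n\leq C\,\d V$ on $[0,T]$. In that case $F_{\eps,t}$ is uniformly bounded in $L^\infty$ (in particular in $L^p$ for all $p>1$), and the Chen--Cheng estimate applies directly.

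\textbf{Main obstacle.} I expect the main obstacle to be exactly this step: obtaining a lower bound for $-\dot u^\eps$ that compensates the vanishing of $g_\eps$ along $E$ uniformly in $\eps$. If the pointwise minimum principle for $G$ cannot be closed, the fallback is weaker: prove only an integral bound. Since $\int(-\dot u)\,\omega_{\eps,u}^n/V_{\omega_\eps}=1$ and $\frac{\d}{\d t}E_\eps=-1$, one would aim for a uniform entropy bound $\int F_{\eps,t}\log F_{\eps,t}\,\d V\leq C_T$, or an $L^p$ bound for some $p>1$ after integrating in time over $[0,T]$. This suffices for the Guo--Phong--Tong machinery at almost every $t$, and by continuity in $t$ then at every $t\in[0,T]$.
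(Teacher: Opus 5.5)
The proposal has a genuine gap. Your upper bound, the two-sided bound on $L_\eps(u^\eps_t)$, and the uniform $L^p$ bounds ($p<p_0$) on $e^{-u^\eps}g_\eps$ are fine. But the whole argument depends on the step you yourself flag, and that step is not supplied.

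\textbf{The pointwise route aims at a false inequality.} The bound $G\ge -C$ means $\omega_{\eps,u_t}^n\le C\,\d V$, i.e.\ $-\dot u^\eps\ge c\,g_\eps$ uniformly in $\eps$. Suppose some discrepancy $a_j$ is negative. Then $g_\eps=e^{\psi_k^+-\psi_k^-}$ becomes unbounded near $E^-$ as $\eps\to0$. On the other hand, the speed is uniformly bounded for $t>0$: Proposition~\ref{prop:local_t-deriv_upper} gives $0<-\dot u^\eps\le C_Tt^{-n/(n+1)}$. Hence $\omega_{\eps,u_t}^n\ge c_t\,g_\eps\,\d V$, which is not uniformly bounded by $C\,\d V$. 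In the limit, $\omega_{u_t}^n\ge c_t\,\mu$, and $\mu$ has unbounded density along the divisors of negative discrepancy.

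The computation breaks exactly where you wrote ``$\Delta\log g_\eps\ge-\Delta\psi_k^-\ge\dots$''. The condition $\ddc\psi_k^-\ge-B_0\omega_Y$ bounds $-\Delta_{\omega_{\eps,u}}\psi_k^-$ from \emph{above}, not from below. Replacing $u$ by $u-\delta\psi$ does nothing about this.

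\textbf{The fallback is only a wish.} The identities $\int(-\dot u^\eps)\,\omega_{\eps,u}^n=V_{\omega_\eps}$ and $\frac{\d}{\d t}E_\eps=-1$ control \emph{large} values of $-\dot u^\eps$. But $F_{\eps,t}\propto(-\dot u^\eps)^{-1}$ needs control of \emph{small} values, and nothing in the proposal provides it. Also, ``continuity in $t$'' would not make an a.e.-in-time bound uniform in $\eps$; monotonicity would, since $u^\eps_t\ge u^\eps_{t_1}$ for $t\le t_1$.

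\textbf{How the paper avoids the issue.} The paper never needs a lower bound on the speed. It runs the Guo--Phong--Tong/Chen--Cheng argument parabolically on $Y\times[0,T]$, against the space-time measure $e^{F_\eps}\,\d V\,\d t$ with $e^{F_\eps}=e^{-u^\eps+L_\eps(u^\eps)}g_\eps$. This measure contains no $\dot u^\eps$ and is uniformly in $L^p$ by Skoda. The auxiliary flow \eqref{eq:auxiliary_equ} has the same structure $(-\dot\psi_j)(\omega_\eps+\ddc\psi_j)^n=\dots$. Young's inequality combines $-\dot\psi_j$ with $(-\dot u^\eps)^{1+1/n}(-\dot\psi_j)^{-1/n}$, so that $\mathcal{L}\Phi\ge(-\dot u^\eps)(\cdots)$ for $\mathcal{L}=-\partial_t+(-\dot u^\eps)\Delta_{\omega'}$. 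The positive factor $-\dot u^\eps$ then plays no role in the maximum principle, and De Giorgi iteration with $\phi(s_0)\le T$ finishes.

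\textbf{How to repair your approach.} If you want to keep your slice-wise elliptic approach, the missing ingredient is an \emph{integral} lower bound on the speed. One option is the monotonicity computation of Proposition~\ref{lem:La-density-eps} for $\int(-\dot u^\eps)^{-a}e^{-u^\eps+L_\eps}g_\eps\,\d V$ with $a=1+\delta$. That computation uses only the Skoda bound on $L_\eps$, not the $C^0$ estimate. It yields $\|g_\eps/(-\dot u^\eps)\|_{L^r}\le C$ for some $r>1$, hence a uniform $L^{r'}$ bound on $F_{\eps,t}$ for some $r'>1$. The uniform Ko{\l}odziej estimate, together with $\sup_Y u^\eps_t\ge E_\eps(u_0)-T$, then concludes.
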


\begin{proposition}[Stability estimates]
\label{prop:sub_stab_holder}
For any $\beta\in\left(0,\frac{p_0-1}{(2 + n)p_0-1 }\right)$,
there is a uniform constant 
$K_{\beta}$  (independent of $\varepsilon$)  such that
\begin{align}
\label{eq:quasi-decreasing}
	\sup_{Y\times[0,T]}(u^{\eps}-u^{\eps'})
	\leq K_{\beta}\|u^{\eps}-u^{\eps'}\|_{L^1}^\beta
    \quad \text{for $\eps<\eps'$},
\end{align}
and
\begin{align}
\label{eq:time Holder}
	|u^\eps(x,t)-u^\eps(x,t')|\leq K_{\beta}|t-t'|^\beta
    \quad \text{for $x\in Y$ and $t,t'\in[0,T]$}.
\end{align} 
\end{proposition}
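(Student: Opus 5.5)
The plan is to prove both estimates from a single $L^1$--$L^\infty$ stability inequality for complex Monge--Amp\`ere equations with $L^p$ densities, uniform in the classes $\omega_\eps$, in the style of Kolodziej and Guo--Phong--Tong/Chen--Cheng. Write the perturbed flow as a family of elliptic equations
\[
(\omega_\eps+\ddc u^\eps_t)^n = F^\eps_t\,\d V,\qquad F^\eps_t=\frac{V_{\omega_\eps}}{-\dot u^\eps_t}\,e^{-u^\eps_t+L_\eps(u^\eps_t)}\,g_\eps .
\]
All classes $\omega_\eps$ with $\eps\le 1$ satisfy $\pi^*\omega\le\omega_\eps\le\pi^*\omega+\omega_Y$. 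Since $\pi^*\omega$ is semipositive and big, the Guo--Phong--Tong/Chen--Cheng auxiliary-equation machinery gives constants that depend only on these two bounds, on $n$, and on $\|F\|_{L^p(\d V)}$.

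\textbf{Step 1: the key analytic input (expected main obstacle).} We need a uniform bound $\|F^\eps_t\|_{L^p(\d V)}\le C_T$ for every $p<p_0$. By Proposition~\ref{prop:unif_Linfty}, $u^\eps$ and $L_\eps(u^\eps)$ are uniformly bounded, and $e^{\psi_k^+}$ is uniformly bounded above. So it suffices to bound $e^{\psi_k^+}/(-\dot u^\eps)$ from above. The naive Hamilton-trick bound $\rho-L\ge \inf\rho(0)-C_T$, where $e^\rho=-\dot u$, fails here, because $\inf\rho(0)$ contains $\inf\psi_k^+\to-\infty$. Instead I would apply the maximum principle with $\cL=\Delta_{\omega_{\eps,u}}-e^{-\rho}\partial_t$ to
\[
Q=\rho-\psi_k^+ + A\,u^\eps .
\]
At $t=0$, $Q$ is uniformly bounded: the $\psi_k^+$ singularity of $\rho(0)$ cancels, since $u_0$ is smooth and $\omega_\eps\ge\pi^*\omega$ with $\omega_{u_0}>0$ gives a uniform positive lower bound on $(\omega_\eps+\ddc u_0)^n/\d V$. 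The terms are controlled as follows:
\begin{itemize}
\item The inequality $\ddc\psi_k^+\ge -B_0\omega_Y$ gives $-\Delta\psi_k^+\le B_0\tr_{\omega_{\eps,u}}\omega_Y$.
\item The term $A\,\cL u=A(n+1-\tr_{\omega_{\eps,u}}\omega_\eps-e^{-\rho})$ supplies $-A\tr\,\omega_\eps$. Because $\omega_Y\le C\omega_\eps$ fails near $E$, I would replace $Au^\eps$ by $A(u^\eps-\eps'\!\psi)$ with $\psi$ from \eqref{eq:piomega-omega_Y}. This produces $-A\,\tr\,\omega_Y$ and absorbs the $B_0$ term.
\item The $|\nabla\rho|^2$ term from the $\rho$ evolution is handled as in the proof of Proposition~\ref{alpha_IMAF_C2_est}.
\end{itemize}
The aim is a minimum-principle statement $\rho\ge\psi_k^+ + \gamma' \log|s_{E_0}|^2-C_T$, that is $F^\eps_t\le C_T e^{-\psi_k^-}|s_{E_0}|^{-2\gamma'}$. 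Choosing $\gamma'$ small and using H\"older's inequality keeps this in $L^p$ for every $p<p_0$, uniformly in $\eps$.

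\textbf{Step 2: stability lemma.} Next I prove: for $\phi\in\PSH_{\omega_{\eps'}}\cap L^\infty$ with $\omega_{\eps',\phi}^n=F\,\d V$, $\|F\|_{L^p}\le C$, and any $v\in\PSH_{\omega_{\eps'}}$ with $\|v\|_\infty\le K$,
\[
\sup_Y(v-\phi)\le K_\beta\|(v-\phi)_+\|_{L^1(\d V)}^\beta,\qquad \beta<\tfrac{p-1}{(n+2)p-1},
\]
with $K_\beta$ uniform in $\eps'$. Letting $p\uparrow p_0$ gives the full range of $\beta$ stated in the proposition. The proof follows Chen--Cheng \cite[\S2.2]{CC23}: solve an auxiliary Monge--Amp\`ere equation whose right-hand side is $(v-\phi-s)_+\,F$ normalised, compare with the energy of the level sets, and iterate De Giorgi style. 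The only new point is checking that every constant depends solely on the sandwich bounds on $\omega_{\eps'}$, on $\|F\|_{L^p}$, and on $K$. The Skoda bound Lemma~\ref{lem: uniform_skoda} supplies the needed uniform integrability.

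\textbf{Step 3: conclusion.}
\begin{itemize}
\item \emph{Estimate \eqref{eq:quasi-decreasing}.} For $\eps<\eps'$ we have $\omega_\eps\le\omega_{\eps'}$, so $u^\eps_t$ is $\omega_{\eps'}$-psh and uniformly bounded by Proposition~\ref{prop:unif_Linfty}. Apply Step~2 with $\phi=u^{\eps'}_t$, whose density is controlled by Step~1, and $v=u^\eps_t$.
\item \emph{Estimate \eqref{eq:time Holder}.} Since $\dot u^\eps<0$, for $t<t'$ we have $0\le u^\eps_t-u^\eps_{t'}$. Apply Step~2 with $\phi=u^\eps_{t'}$ and $v=u^\eps_t$. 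It remains to bound the $L^1$ norm by $C|t-t'|$. I would use $\d E_\eps/\d t=-1$ together with the monotonicity and concavity of $E_\eps$, which give
\[
\frac{1}{V_{\omega_\eps}}\int_Y(u_t-u_{t'})\,\omega_{\eps,u_{t'}}^n\le E_\eps(u_t)-E_\eps(u_{t'})=|t-t'| .
\]
Step~2 can be run with $L^1$ measured against $\omega_{\eps,\phi}^n=F\,\d V$, because the argument only uses the $F$-weighted mass of the sets $\{v-\phi>s\}$. Alternatively one can convert this weighted bound to $\d V$ by H\"older's inequality at the cost of a smaller exponent, which can be absorbed since $\beta$ ranges over an open interval.
\end{itemize}
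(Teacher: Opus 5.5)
\textbf{The gap is in Step~1, and Steps~2--3 depend on it.} You need $\|F^\eps_t\|_{L^p}\le C_T$ for every $p<p_0$, and the minimum principle you sketch does not give this.

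To absorb $-\Delta_{\omega'}\psi_k^+\le B_0\tr_{\omega'}\omega_Y$ you must use $A\,\cL(u^\eps-c\psi)$. This term contributes $-Ac\,\tr_{\omega'}\omega_Y$, since $\omega_\eps+c\,\ddc\psi\ge c\,\omega_Y$ off $E_0$, so you need $Ac\ge B_0$. A coefficient $c$ tending to $0$, such as your $\eps'$, would force $A\to\infty$ and destroy uniformity.

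The barrier you obtain is therefore $\rho\ge\psi_k^++Ac\,\psi-C_T$. Equivalently, $F^\eps_t\le C_Te^{-\psi_k^-}|s_{E_0}|^{-2\gamma'}$ with $\gamma'=Ac\gamma\ge B_0\gamma$. No free parameter makes $\gamma'$ small:
\begin{itemize}
\item Since $\psi^+$ has divisorial singularities, $B_0\omega_Y+\ddc\psi^+\ge0$ means $B_0\omega_Y\ge\Theta^+$ for a smooth representative $\Theta^+$ of $\{E^+\}$.
\item Hence $B_0\{\omega_Y\}-\{E^+\}$ is nef.
\item Testing on $\pi$-contracted curves $C$ gives $B_0\gamma\ge E^+\cdot C/(-E_0\cdot C)$, so shrinking $\gamma$ does not help.
\end{itemize}
Your bound is thus in $L^p$ only for a range of $p$ fixed by the geometry of the resolution. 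In general this range does not reach $p_0$, and it need not contain any $p>1$.

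A smaller point: your claimed uniform positive lower bound for $(\omega_\eps+\ddc u_0)^n/\d V$ is false, because $\pi^*\omega_{u_0}$ degenerates along $E$. This is harmless, since only the upper bound is used at $t=0$.

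\textbf{How the paper avoids this.} The paper never puts $1/(-\dot u^\eps)$ into a density. It runs Chen--Cheng's argument in space-time:
\begin{itemize}
\item The auxiliary parabolic flow \eqref{eq:auxiliary_equ-stability} has right-hand side $\eta_j\bigl((1-\delta)v-u^{\eps'}-s\bigr)e^{F_{\eps'}}$. Here $e^{F_{\eps'}}=e^{-u^{\eps'}+L_{\eps'}(u^{\eps'})}g_{\eps'}$ is uniformly in $L^{p_0}$ by Proposition~\ref{prop:unif_Linfty}.
\item In the comparison argument (as in Lemma~\ref{lem:Phi_leq0}), the factors $-\dot u^{\eps'}$ and $-\dot\psi_{j,\delta}$ are removed by Young's inequality.
\item The De Giorgi iteration is carried out for the measure $e^{F_{\eps'}}\d V\d t$.
\end{itemize}
This explains three features of the statement. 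The exponent is the space-time one, $\frac{1}{1+(n+1)p_0^*}=\frac{p_0-1}{(n+2)p_0-1}$. The $L^1$ norm in \eqref{eq:quasi-decreasing} is over $Y\times[0,T]$, so a slice-wise bound like yours would still need an interpolation in $t$. And \eqref{eq:time Holder} follows from \eqref{eq:quasi-decreasing} via Chen--Cheng's local time-integration lemma, or directly from Remark~\ref{rem:improved_time_holder}.

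Note that the only density bound the paper does prove, Proposition~\ref{lem:La-density-eps}, gives $g/w\in L^r$ only for $r<\frac{ap_0}{a+p_0-1}$ with $a$ close to $1$. An elliptic slice-by-slice route would therefore give a much smaller range of $\beta$ than the one stated.

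\textbf{Two slips in Step~3.}
\begin{itemize}
\item Concavity gives $E_\eps(u_t)-E_\eps(u_{t'})\le\frac{1}{V_{\omega_\eps}}\int_Y(u_t-u_{t'})\,\omega_{\eps,u_{t'}}^n$, which is the reverse of your inequality. The bound you want does hold, but with a factor $n+1$, using $u_t\ge u_{t'}$ and the mixed-term expansion of $E_\eps(u_t)-E_\eps(u_{t'})$.
\item H\"older's inequality cannot convert an $F\,\d V$-weighted $L^1$ bound into a $\d V$ bound without a lower bound on $F$.
\end{itemize}
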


To prove the above results, we first need to argue  a series of lemmas.

\begin{lemma}
\label{eq:unif-Lp}
Denote
\[
e^{F_{\eps}} \coloneqq e^{-u^{\eps}+L_{\eps}(u^{\eps})}g_{\eps}
= e^{-u^{\eps}+L_{\eps}(u^{\eps})+\psi_{\eps}^{+}-\psi_{\eps}^{-}}.
\]
For any $r>n+1$, there is a uniform constant $K_{r}$  (independent of $\varepsilon$) such that
 	\begin{align}
 	\label{eq:unif-r-Ent}
 		\Ent_r(F_\eps)\coloneqq
 		\int_Y(F_\eps^2+1)^{\frac{r}{2}}e^{F_\eps}\,\d V
 		\leq K_{r}.
 	\end{align}
 \end{lemma}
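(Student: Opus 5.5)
The plan is to reduce \eqref{eq:unif-r-Ent} to the uniform Skoda bound of Proposition~\ref{prop: uniform_skoda_eps} and the uniform $L^{p_0}$ bound on $e^{-\psi_k^-}$, via a pointwise bound on $F_\eps$ and H\"older's inequality. Nothing specific to $r>n+1$ should be needed here; the restriction only matters for later use.

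\emph{Step 1: uniform bound on $L_\eps(u^\eps_t)$.} For the upper bound on $L_\eps$ we use $\sup_Y u^\eps_t\le \sup_Y u_0$, which holds because $\dot u^\eps<0$. This gives
\[
e^{-L_\eps}=\int_Y e^{-u^\eps}g_\eps\,\d V\ \ge\ e^{-\sup u_0}\int_Y g_\eps\,\d V.
\]
The last integral is bounded below uniformly, since $g_{\eps}\to g$ in $L^{p_0}$ and $\int_Y g\,\d V=\int_X\mu>0$. For the lower bound on $L_\eps$ we use $g_\eps\le e^{\sup\psi_k^+}e^{-\psi_k^-}$, where $\sup_Y\psi_k^{+}$ is uniformly bounded. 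Choose $q$ conjugate to $p_0$. H\"older's inequality, Proposition~\ref{prop: uniform_skoda_eps} and $\|e^{-\psi_k^-}\|_{L^{p_0}}\le\|e^{-\psi^-}\|_{L^{p_0}}$ then bound $e^{-L_\eps}$ above. Hence $|L_\eps(u^\eps_t)|\le C$ uniformly for $t\in[0,T]$.

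\emph{Step 2: pointwise reduction.} The function $s\mapsto (s^2+1)^{r/2}e^{s}$ is bounded by $C_r$ on $(-\infty,0]$, so the set $\{F_\eps\le 0\}$ contributes at most $C_r\int_Y\d V$. On $\{F_\eps>0\}$, Step 1 and $\psi_k^+\le C$ give
\[
0<F_\eps\le -u^\eps-\psi_k^-+C .
\]
Here both $-u^\eps\ge -\sup u_0$ and $-\psi_k^-\ge -C$ are bounded below. Using $(1+|a|)^r\le C_{r,\delta}e^{\delta a}$ for $a\ge -C$, we get on this set, for any small $\delta>0$,
\[
(F_\eps^2+1)^{r/2}e^{F_\eps}\le C_{r,\delta}\, e^{-(1+\delta)u^\eps}\,e^{-(1+\delta)\psi_k^-}.
\]

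\emph{Step 3: H\"older.} Choose $\delta>0$ with $s:=p_0/(1+\delta)>1$, and let $s'$ be its conjugate exponent. Then
\[
\int_Y e^{-(1+\delta)u^\eps}e^{-(1+\delta)\psi_k^-}\d V\le \Big(\int_Y e^{-(1+\delta)s'u^\eps}\d V\Big)^{1/s'}\Big(\int_Y e^{-p_0\psi_k^-}\d V\Big)^{1/s}.
\]
The first factor is bounded by Proposition~\ref{prop: uniform_skoda_eps} with $q=(1+\delta)s'$. The second factor is bounded by $\|e^{-\psi^-}\|_{L^{p_0}}^{p_0/s}$, using the monotonicity $\psi_k^-\ge\psi^-$. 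This gives \eqref{eq:unif-r-Ent}.

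The only delicate point is Step 2. One must avoid the $\psi^+$ singularity, where $F_\eps\to-\infty$, and absorb the polynomial weight into a slightly worse exponential without leaving the $L^{p_0}$ range of $e^{-\psi_k^-}$. Splitting by the sign of $F_\eps$ and spending the margin $p_0>1$ on the factor $1+\delta$ handles both issues.
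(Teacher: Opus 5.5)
Your proof is correct and follows essentially the same route as the paper. Both arguments bound $L_\eps$ from above using $u^\eps\le u_0$, then apply H\"older between the uniform Skoda bound and the $L^{p_0}$ bound on $e^{-\psi_\eps^-}$ to control $\int_Y e^{(1+\delta)F_\eps}\,\d V$, and finally absorb the polynomial weight. The paper does that last step in one line via $(x^2+1)^{r/2}e^x\le Ce^{px}+C$ for all real $x$ rather than your sign split, and the lower bound on $L_\eps$ in your Step 1 is not needed.
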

 \begin{proof}
It follows from $\dot{u}^{\eps}<0$ that $u^{\eps}\leq u_{0}$ and then 
 \begin{align*}
 		L_\eps(u^\eps)\leq L_\eps(u_0)
 		=-\log\int_Y e^{-u_0} g_\eps\,\d V
 		\leq K.
 \end{align*}
Then using $\sup_{Y}\psi_{\eps}^{+}\leq K$, H\"older's inequality and Lemma~\ref{lem: uniform_skoda},
\[
\int_Y e^{pF_\eps}\,\d V
\leq e^{K}\int_Y e^{-pu^\eps}e^{-p\psi_\eps^-} \,\d V
\leq e^{K}\left( \int_Y e^{-p(p_0p^{-1})^*u^\eps}\,\d V\right)^{\frac{1}{(p_0p^{-1})^*}} \left( \int_Y e^{-p_0\psi_\eps^-} \,\d V\right)^{\frac{p}{p_0}} \leq K.
\]
Note that for all $x\in\RR$, $(x^2+1)^{r/2} e^x\leq Ce^{px}+C$, and then \eqref{eq:unif-r-Ent} follows.
 \end{proof}

Let $s\geq 0$, and let $\eta_j:\RR\rightarrow\RR_+$ be a family of smooth positive functions $\eta_j$, such that $\max(x,0)< \eta_j(x)< \max(x,0) + \frac{1}{j}$. We consider the following auxiliary flow:
\begin{equation}\label{eq:auxiliary_equ}
\begin{cases}
V_{\omega_\eps}^{-1}(-\dot{\psi}_j)(\omega_\eps+\ddc{\psi}_j)^n
 	=A_{j,s}^{-1}\cdot\eta_j(-u^\eps-s) e^{F_\eps}\,\d V
& \mbox{in $Y\times[0,T]$},  \\[1mm]
\psi_j(t)\in\PSH_{\omega_{\eps}} & \mbox{on $Y\times\{t\}$}, \\[1mm]
\psi_j  = 0 & \mbox{on $Y\times\{0\}$},
\end{cases}
\end{equation}
where 
\begin{align*}
    \psi_j:=\psi_{\eps,j},
    \quad
	A_{j,s}\coloneqq A_{\eps,j,s}\coloneqq \int_{Y\times [0,T]}\eta_j(-u^\eps-s) e^{F_\eps}\d V \d t.
\end{align*}
After applying a smooth reparametrization in time, Theorem \ref{thm:alpha_IMAF_exist} is applicable, with normalization \eqref{eq: normaliz} and $\lambda =0$. Hence there exists a smooth solution of \eqref{eq:auxiliary_equ}.

Note that $\dot\psi_j<0$, hence $\psi_j\leq0$. On the other hand,
\begin{align*}
E_\eps(\psi_j)= {} &E_\eps(\psi_j(0))+\int_0^t\frac{\d}{\d s}E_\eps(\psi_j(s)) \d s \\
= {} &\int_0^{t}V_\eps^{-1}\int_Y\dot\psi_j(\omega_\eps+\ddc\psi_j)^n\, \d t \\
= {} &\int_0^t\int_Y-A_{j,s}^{-1}\eta_j(-u^\eps-s) e^{F_\eps}\, dV \d t
\geq-1.
\end{align*} 
Then for any $q>1$, Lemma~\ref{lem: uniform_skoda} implies
\begin{align}
\label{eq:unif-skoda-psi_j}
	\int_Y e^{-q\psi_j}\d V\leq K_{q}.
\end{align}

\begin{lemma}
\label{lem:Phi_leq0}
	There exist dimensional constants $c_n>0$ and $C_n>0$ such that for any $s\geq\|u_0\|_{L^\infty}$,
	\begin{align*}
		c_n A_{j,s}^{-\frac{1}{n+1}}((-u^\eps-s)_{+})^{\frac{n+2}{n+1}}
		\leq -\psi_j+C_n A_{j,s} \ \ \ \textup{ on } Y \times [0,T].
	\end{align*}
\end{lemma}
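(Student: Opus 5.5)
This is the parabolic analogue of the Guo--Phong--Tong / Chen--Cheng comparison argument. We compare $u^\eps$ with the auxiliary solution $\psi_j$ through the test function
\[
\Phi \coloneqq -\delta\,(-u^\eps - s) + (-\psi_j + \Lambda)^{\frac{n+1}{n+2}}, \qquad\text{more precisely}\qquad \Phi\coloneqq -\delta(-u^\eps-s)-\bigl(-\psi_j+\Lambda\bigr)^{\frac{n+1}{n+2}},
\]
with
\[
\delta = c\,A_{j,s}^{-\frac{1}{n+2}},\qquad \Lambda = C A_{j,s}.
\]
The goal is to show $\Phi\le 0$ on $Y\times[0,T]$. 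Raising this inequality to the power $\frac{n+2}{n+1}$ gives exactly
\[
c_n A_{j,s}^{-\frac1{n+1}}\bigl((-u^\eps-s)_+\bigr)^{\frac{n+2}{n+1}}\le -\psi_j+C_nA_{j,s}.
\]
The constants $c,C$ will be fixed by the calculation at a maximum point.

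\textbf{Initial time.} Since $s\ge\|u_0\|_{L^\infty}$, we have $-u_0-s\le 0$, so $\Phi\le 0$ at $t=0$; the same holds wherever $-u^\eps\le s$. We may therefore assume $\Phi$ has a positive maximum at some $(x_0,t_0)$ with $t_0>0$ and $-u^\eps-s>0$ there.

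\textbf{Maximum-point calculation.} We use the parabolic operator $\cL = \Delta_{\omega_{u^\eps}}$ coupled with $\partial_t$ weighted by $(-\dot u^\eps)^{-1}$, i.e. the linearization of $\log(-\dot u)+\log\omega_u^n$. At $(x_0,t_0)$ we have $\partial_t\Phi\ge0$, $\ddc\Phi\le 0$, and
\[
-\delta\,\ddc u^\eps + \tfrac{n+1}{n+2}(-\psi_j+\Lambda)^{-\frac1{n+2}}\ddc\psi_j \le 0
\]
up to a favorable concavity term. Rewriting with $\omega_\eps$, this gives
\[
\tfrac{n+1}{n+2}(-\psi_j+\Lambda)^{-\frac1{n+2}}\,\omega_{\psi_j} \le \delta\,\omega_{u^\eps} + \Bigl(\tfrac{n+1}{n+2}(-\psi_j+\Lambda)^{-\frac1{n+2}}-\delta\Bigr)\omega_\eps,
\]
and similarly for the time derivatives:
\[
\tfrac{n+1}{n+2}(-\psi_j+\Lambda)^{-\frac1{n+2}}(-\dot\psi_j)\le \delta(-\dot u^\eps).
\]
The term in front of $\omega_\eps$ is nonnegative if $\Lambda$ is large relative to $\delta^{-(n+2)}$, which is consistent with $\Lambda\sim A$ and $\delta\sim A^{-1/(n+2)}$. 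Dropping the $\omega_\eps$ term, we take $n$-th powers and multiply by the time inequality, which yields
\[
\Bigl(\tfrac{n+1}{n+2}\Bigr)^{n+1}(-\psi_j+\Lambda)^{-\frac{n+1}{n+2}}(-\dot\psi_j)\,\omega_{\psi_j}^n\le \delta^{n+1}(-\dot u^\eps)\,\omega_{u^\eps}^n .
\]
Here the product structure $(-\dot u)\omega_u^n$ behaves like an $(n+1)$-dimensional Monge--Amp\`ere operator; this explains the exponent $n+1$ and is why the argument closes. Substituting both flow equations, with right-hand sides $A^{-1}\eta_j(-u^\eps-s)e^{F_\eps}$ and $e^{F_\eps}$, cancels $e^{F_\eps}$ and gives
\[
A^{-1}(-u^\eps-s)\le A^{-1}\eta_j(-u^\eps-s)\le C\,\delta^{n+1}(-\psi_j+\Lambda)^{\frac{n+1}{n+2}} .
\]

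\textbf{Contradiction.} Positivity of $\Phi$ at $(x_0,t_0)$ says $\delta(-u^\eps-s) > (-\psi_j+\Lambda)^{\frac{n+1}{n+2}}$. Combining this with the previous inequality gives $1 < C\,A\,\delta^{n+2}$. Choosing $c$ small makes $A\delta^{n+2} = c^{n+2}$ small, which contradicts this, so $\Phi\le0$ everywhere.

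\textbf{Main obstacle.} The delicate step is making the maximum-principle comparison rigorous. This means handling the $\omega_\eps$ error term, which requires $\tfrac{n+1}{n+2}(-\psi_j+\Lambda)^{-1/(n+2)}\ge\delta$ and fixes the relation between $C_n$ and $c_n$. It also means justifying the mixed AM--GM step that combines the time and spatial inequalities into the $(n+1)$-fold product. To avoid sign issues, I would first try the equivalent formulation of comparing $\log((-\dot u)\omega_u^n)$ through the concave operator $\log(-\dot u)+\log\det$.
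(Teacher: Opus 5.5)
Your argument is correct once two sign slips are repaired. It uses the same test function and the same scalings ($\delta\sim A_{j,s}^{-\frac{1}{n+2}}$, $\Lambda\sim A_{j,s}$) as the paper; the difference is in how the maximum point is exploited.

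\textbf{The slips.} First, both of your displayed definitions of $\Phi$ carry the wrong sign on the first term. The function you actually use in the contradiction step, which is also the paper's, is $\Phi=\delta(-u^\eps-s)-(-\psi_j+\Lambda)^{\frac{n+1}{n+2}}$. Second, to discard the $\omega_\eps$ term you need its coefficient to be nonpositive. That is, you need $a\coloneqq\frac{n+1}{n+2}(-\psi_j+\Lambda)^{-\frac{1}{n+2}}\le\delta$, not $\ge\delta$ as you write twice. This is exactly what $\psi_j\le0$ together with $\Lambda\ge\bigl(\frac{n+1}{(n+2)\delta}\bigr)^{n+2}$ gives; it is the paper's relation $\lambda=\frac{n+1}{n+2}\Lambda^{-\frac{1}{n+2}}$. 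With this, the constants close without circularity. The contradiction reads $1<\bigl(\frac{n+2}{n+1}\bigr)^{n+1}c^{n+2}$, so fix $c^{n+2}\le\bigl(\frac{n+1}{n+2}\bigr)^{n+1}$ first, and then take $\Lambda=CA_{j,s}$ with $C\ge\bigl(\frac{n+1}{(n+2)c}\bigr)^{n+2}$.

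\textbf{The route.} The paper applies $\cL=-\partial_t+(-\dot u^\eps)\Delta_{\omega'}$ to $\Phi$, so it only uses the $\omega'$-trace of the Hessian. It then needs AM--GM to convert $\tr_{\omega'}(\omega_\eps+\ddc\psi_j)$ into a ratio of volume forms. It also needs Young's inequality to absorb the resulting factor $(-\dot\psi_j)^{-1/n}$ against the term $a(-\dot\psi_j)$ coming from $-\partial_t\Phi$.

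You instead use the full second-order information at the maximum. This gives the form inequality $0\le a(\omega_\eps+\ddc\psi_j)\le\delta(\omega_\eps+\ddc u^\eps)$ and, separately, the scalar inequality $a(-\dot\psi_j)\le\delta(-\dot u^\eps)$. Monotonicity of the determinant on nonnegative Hermitian forms, together with multiplying inequalities between positive quantities, gives $a^{n+1}(-\dot\psi_j)(\omega_\eps+\ddc\psi_j)^n\le\delta^{n+1}(-\dot u^\eps)(\omega_\eps+\ddc u^\eps)^n$ immediately. So the ``mixed AM--GM step'' you flag as the main obstacle is not needed at all.

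Your version is shorter and elementary, and it makes the $(n+1)$-homogeneity of $(-\dot u)\omega_u^n$ visible. The paper's trace formulation is the standard Guo--Phong--Tong/Chen--Cheng one. It relies only on the linearized operator, so it extends beyond the determinant structure, and it keeps the computation parallel to the references quoted for the stability estimate.
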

\begin{proof}
We introduce the notation
$$\omega'\coloneqq \omega_\eps+\ddc{u}^\eps,$$ 
and define the parabolic operator 
	\begin{align*}
		\cL v\coloneqq \cL_\eps v
		\coloneqq -\p_t v+(-\dot u^\eps)\Delta_{\omega'} v
	\end{align*}
	and the constants
	\begin{align*}
		\lambda\coloneqq\left(\frac{1}{2}n^{\frac{n}{n+1}}\frac{1}{n+2} \right)^{\frac{n+1}{n+2}}A_{j,s}^{-\frac{1}{n+2}}, \quad
		\Lambda\coloneqq \left(\frac{1}{2}n^{\frac{n}{n+1}} \right)^{-(n+1)}\frac{(n+1)^{n+2}}{n+2}A_{j,s}.
	\end{align*}
	We use the test function
	\begin{align*}
		\Phi\coloneqq \lambda(-u^\eps-s)-(-\psi_j+\Lambda)^{\frac{n+1}{n+2}}.
	\end{align*}
It suffices to show $\Phi\leq0$. One computes
	\begin{align*}
		&\cL\Phi
		=-\p_t\Phi+(-\dot u^\eps)\Delta_{\omega'}\Phi \\
		&=-\lambda(-\dot u^\eps)+\frac{n+1}{n+2}(-\psi_j+\Lambda)^{-\frac{1}{n+2}}(-\dot\psi_j)  \\
		&+(-\dot u^\eps)\left( 
		\lambda(-\Delta_{\omega'}u^\eps)
		+\frac{n+1}{n+2}(-\psi_j+\Lambda)^{-\frac{1}{n+2}}\Delta_{\omega'}\psi_j 
		+\frac{n+1}{(n+2)^2}(-\psi_j+\Lambda)^{-\frac{n+3}{n+2}}|\nabla\psi_j|_{\omega'}^2\right) \\
		&\geq-\lambda(-\dot u^\eps)+\frac{n+1}{n+2}(-\psi_j+\Lambda)^{-\frac{1}{n+2}}(-\dot\psi_j)   \\
		&+(-\dot u^\eps)\left( \lambda\tr_{\omega'}\omega_\eps -\lambda n 
		+\frac{n+1}{n+2}(-\psi_j+\Lambda)^{-\frac{1}{n+2}}\tr_{\omega'}(\omega_\eps+\ddc\psi_j)
		-\frac{n+1}{n+2}(-\psi_j+\Lambda)^{-\frac{1}{n+2}}\tr_{\omega'}\omega_\eps	\right).
	\end{align*}
	By the definitions of $\lambda,\Lambda$, one has
	\begin{align*}
		\lambda=\frac{n+1}{n+2}\Lambda^{-\frac{1}{n+2}}
		\geq \frac{n+1}{n+2}(-\psi_j+\Lambda)^{-\frac{1}{n+2}}.
	\end{align*}
	On the other hand, by AM-GM inequality, 
	\begin{align*}
		\tr_{\omega'}(\omega_\eps+\ddc\psi_j)
		\geq n\left(\frac{(\omega_\eps+\ddc\psi_j)^n}{\omega'^n} \right)^{\frac{1}{n}}
		= {} & n\left(\frac{V_\eps(-\dot\psi_j)^{-1} A_{j,s}^{-1}\cdot\eta_j(-u^\eps-s) e^{F_\eps}\, dV}{V_\eps(-\dot{u}^\eps)^{-1} e^{F_\eps}\, dV} \right)^{\frac{1}{n}} \\
		= {} & n\left(\frac{(-\dot u^\eps)\eta_j(-u^\eps-s) }{(-\dot{\psi}_j) A_{j,s}} \right)^{\frac{1}{n}}.
	\end{align*}
	Therefore, we have
	\begin{align*}
		\cL\Phi\geq-\lambda(n+1)(-\dot u^\eps)
		&+\frac{n+1}{n+2}(-\psi_j+\Lambda)^{-\frac{1}{n+2}}(-\dot\psi_j)   \\
		&+n\frac{n+1}{n+2}(-\psi_j+\Lambda)^{-\frac{1}{n+2}}\frac{(-\dot u^\eps)^{1+\frac{1}{n}}}{(-\dot\psi_j)^{\frac{1}{n}}}
		\frac{\eta_j(-u^\eps-s)^{\frac{1}{n}}}{A_{j,s}^{\frac{1}{n}}}.
	\end{align*}
Applying Young’s inequality in the form
\begin{align*}
	\Big(MN^{\frac{1}{n}}\Big)^{\frac{n}{n+1}}x
	\leq Ny+M\frac{x^{1+\frac{1}{n}}}{y^{\frac{1}{n}}}
\end{align*}
with 
\begin{align*}
		y=-\dot\psi_j,\quad x=-\dot u^\eps , \quad
		N=\frac{n+1}{n+2}(-\psi_j+\Lambda)^{-\frac{1}{n+2}}, \quad		M=n\frac{n+1}{n+2}(-\psi_j+\Lambda)^{-\frac{1}{n+2}}\frac{\eta_j(-u^\eps-s)^{\frac{1}{n}}}{A_{j,s}^{\frac{1}{n}}},
	\end{align*}
	yields that
	\begin{align*}
		&\frac{n+1}{n+2}(-\psi_j+\Lambda)^{-\frac{1}{n+2}}(-\dot\psi_j)   
		+n\frac{n+1}{n+2}(-\psi_j+\Lambda)^{-\frac{1}{n+2}}\frac{(-\dot u^\eps)^{1+\frac{1}{n}}}{(-\dot\psi_j)^{\frac{1}{n}}}
		\frac{\eta_j(-u^\eps-s)^{\frac{1}{n}}}{A_{j,s}^{\frac{1}{n}}} \\
		\geq {} & n^{\frac{n}{n+1}}\frac{n+1}{n+2}(-\psi_j+\Lambda)^{-\frac{1}{n+2}}\frac{\eta_j(-u^\eps-s)^{\frac{1}{n+1}}}{A_{j,s}^{\frac{1}{n+1}}}(-\dot u^\eps).
	\end{align*}
	Therefore, we obtain
	\begin{align}
	\label{eq:cL-Phi-geq}
		\cL\Phi
		\geq(-\dot u^\eps)\left(-(n+1)\lambda
		+ n^{\frac{n}{n+1}}\frac{n+1}{n+2}(-\psi_j+\Lambda)^{-\frac{1}{n+2}}\eta_j(-u^\eps-s)^{\frac{1}{n+1}}A_{j,s}^{-\frac{1}{n+1}}
		 \right).
	\end{align}
	Assume that $\Phi$ attains a positive maximum at $(x_0,t_0)$.
	Since $s\geq\|u_0\|_{L^\infty}$, we have $t_0>0$. Since $\Phi(x_0,t_0)>0$, we have 
    \[
    (-\psi_j+\Lambda)^{-\frac{1}{n+2}}(-u^\eps-s)^{\frac{1}{n+1}}
		>\lambda^{-\frac{1}{n+1}},
    \]
    which implies $-u^\eps-s>0$ and so $\eta_j(-u^\eps-s)^{\frac{1}{n+1}}\geq(-u^\eps-s)^{\frac{1}{n+1}}$. It then follows that
	\begin{align*}
		(-\psi_j+\Lambda)^{-\frac{1}{n+2}}\eta_j(-u^\eps-s)^{\frac{1}{n+1}}
		\geq (-\psi_j+\Lambda)^{-\frac{1}{n+2}}(-u^\eps-s)^{\frac{1}{n+1}}
		>\lambda^{-\frac{1}{n+1}}.
	\end{align*}
	By the maximum principle, we have $\cL\Phi(x_0,t_0)\leq0$. Together with \eqref{eq:cL-Phi-geq} and the definition of $\lambda$, we have
	\begin{align*}
		0\geq \cL\Phi(t_0,x_0)
		\geq {} & (-\dot u^\eps)\left(-(n+1)\lambda
		+ n^{\frac{n}{n+1}}\frac{n+1}{n+2}\lambda^{-\frac{1}{n+1}} A_{j,s}^{-\frac{1}{n+1}}
		 \right) \\
		= {} & (-\dot u^\eps)\Big(-(n+1)\lambda + 2(n+1)\lambda\Big) 
		= (-\dot u^\eps)(n+1)\lambda >0,
	\end{align*}
	which is a contradiction. Thus we obtain $\Phi\leq0$, as required.
\end{proof}

\begin{lemma}
\label{prop:M-T-ineq}
Denote
\begin{align*}
	A_s\coloneqq A_{\eps,s}\coloneqq\int_{Y\times[0,T]}(-u^\eps-s)^+ e^{F_\eps}\, \d V\d t
	=\int_{Y\times[0,T]}(-u^\eps-s)^+ e^{-u^\eps+L_\eps(u^\eps)} g_\eps \d V \d t.
\end{align*}
There exists a dimensional constant $c_n q>0$ such that for any $s\geq\|u_0\|_{L^\infty}$,
	\begin{align}
	\label{eq:M-T-bound}
		\sup_{t \in [0,T]} \int_Y\exp\left(c_n q A_{s}^{-\frac{1}{n+1}}((-u_t^\eps-s)^+)^{\frac{n+2}{n+1}} \right)\d V
		\leq K,
	\end{align}
    for some uniform constant $K$  (independent of $\varepsilon$) .
\end{lemma}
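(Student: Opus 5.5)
The plan is to combine Lemma~\ref{lem:Phi_leq0} with the uniform Skoda bound \eqref{eq:unif-skoda-psi_j} for the auxiliary potentials $\psi_j$, and then let $j\to\infty$. The constant $q$ in the statement is simply a fixed exponent $q>1$. Concretely, we take $q=1$ in the Skoda estimate applied with an exponent $q'$ chosen below, and we absorb everything into the dimensional constant $c_nq$.

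\textbf{Step 1: exponentiate Lemma~\ref{lem:Phi_leq0}.} Fix $s\ge\|u_0\|_{L^\infty}$ and $t\in[0,T]$. Lemma~\ref{lem:Phi_leq0} gives, pointwise on $Y$,
\[
c_n A_{j,s}^{-\frac{1}{n+1}}\bigl((-u^\eps_t-s)^+\bigr)^{\frac{n+2}{n+1}}\le -\psi_j(t)+C_nA_{j,s}.
\]
Multiplying by a fixed $q'>1$ (say $q'=2$) and exponentiating, we get
\[
\int_Y \exp\Bigl(q'c_nA_{j,s}^{-\frac1{n+1}}((-u^\eps_t-s)^+)^{\frac{n+2}{n+1}}\Bigr)\d V\le e^{q'C_nA_{j,s}}\int_Y e^{-q'\psi_j(t)}\d V\le e^{q'C_nA_{j,s}}K_{q'},
\]
using \eqref{eq:unif-skoda-psi_j}. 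That estimate is uniform in $\eps$, $j$ and $t$, since $E_\eps(\psi_j(t))\ge-1$ and $\sup\psi_j\le0$, so Lemma~\ref{lem: uniform_skoda} applies.

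\textbf{Step 2: bound $A_{j,s}$ uniformly.} Since $\eta_j(x)<x^++1/j\le x^++1$, we have
\[
A_{j,s}\le A_s+\int_{Y\times[0,T]}e^{F_\eps}\,\d V\,\d t=A_s+T,
\]
because $e^{F_\eps}\d V$ is a probability measure. Also $A_s\le \int_{Y\times[0,T]}(-u^\eps)^+e^{F_\eps}$. By Hölder, Lemma~\ref{eq:unif-Lp} (or directly the $L^p$ bound on $e^{F_\eps}$ proved there) and Proposition~\ref{prop: uniform_skoda_eps}, this is at most a uniform constant $K_T$. Hence $e^{q'C_nA_{j,s}}$ is uniformly bounded.

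\textbf{Step 3: pass $j\to\infty$.} By dominated convergence, $A_{j,s}\to A_s$ as $j\to\infty$, since $\eta_j(-u^\eps-s)\to(-u^\eps-s)^+$ boundedly (for fixed $\eps$, $u^\eps$ is smooth). If $A_s>0$, then Fatou's lemma applied to the left-hand side yields \eqref{eq:M-T-bound} with $c_nq:=q'c_n$. If $A_s=0$, then $(-u^\eps-s)^+=0$ almost everywhere with respect to the positive density $e^{F_\eps}$, hence everywhere by continuity, so the integrand equals $1$ (with the convention $0\cdot\infty=0$) and the bound is trivial.

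\textbf{Main obstacle.} The main point to watch is that every constant is independent of $j$, $\eps$ and $s$. This works because of the normalization by $A_{j,s}$ in \eqref{eq:auxiliary_equ}, which keeps $E_\eps(\psi_j)\ge-1$. Without that normalization the Skoda constant would depend on the right-hand side of the auxiliary flow.
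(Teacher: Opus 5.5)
Your proposal is correct and follows essentially the same route as the paper: exponentiate Lemma~\ref{lem:Phi_leq0}, apply the uniform Skoda bound \eqref{eq:unif-skoda-psi_j} for $\psi_j$, and let $j\to\infty$ using $A_{j,s}\to A_s$. The uniform bound on $A_s\le\int_{Y\times[0,T]}(-u^\eps)^+e^{F_\eps}\,\d V\,\d t$ also matches the paper up to cosmetic differences: you control this integral by H\"older together with the $L^p$ bound on $e^{F_\eps}$ and Proposition~\ref{prop: uniform_skoda_eps}, whereas the paper uses the Young-type inequality $xy\le x\log x+e^{y-1}$ with the entropy bound of Lemma~\ref{eq:unif-Lp}, and you also treat the degenerate case $A_s=0$ explicitly.
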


\begin{proof}
     Fix $q\in(1,2)$. By Lemma~\ref{lem:Phi_leq0} and \eqref{eq:unif-skoda-psi_j}, one has
     \begin{align}
     \label{prop:M-T-ineq eqn 1}
     	\int_Y\exp\left(c_n q A_{j,s}^{-\frac{1}{n+1}}((-u^\eps-s)^+)^{\frac{n+2}{n+1}} \right)\d V
     	\leq e^{C_n q A_{j,s}} \int_Y e^{-q\psi_j}\, \d V
     	\leq Ke^{KA_{j,s}}.
     \end{align}
     By $u^{\eps}\leq u_{0}$, one has
     $$A_s\leq 
     \int_{Y\times[0,T]}(-u^\eps)^+e^{F_\eps}\,\d V\d t
     \leq\int_{Y\times[0,T]}(\|u_{0}\|_{L^{\infty}}-u^\eps)e^{F_\eps}\,\d V\d t.
      $$
      Letting $j\to\infty$ in \eqref{prop:M-T-ineq eqn 1} and using $A_{j,s}\rightarrow A_s$, we obtain
\begin{align}
	\label{eq:M-T-ineq}
		\int_Y\exp\left(c_n q A_{s}^{-\frac{1}{n+1}}((-u^\eps-s)_+)^{\frac{n+2}{n+1}} \right)\d V
		\leq K \exp\left( {K}\int_{Y\times[0,T]}(\|u_{0}\|_{L^{\infty}}-u^\eps)e^{F_\eps}\,\d V\d t \right).
	\end{align}
Applying the elementary inequality $xy\leq x\log x+e^{y-1}$ for $x,y>0$, one has
\begin{align*}
	\int_{[0,T]\times Y}(\|u_{0}\|_{L^{\infty}}-u^\eps) e^{F_\eps}\, \d V\d t
	= {} &\int_{Y\times[0,T]}q(\|u_{0}\|_{L^{\infty}}-u^\eps)\cdot q^{-1}e^{F_\eps}\, \d V\d t \\
	\leq {} & \int_{Y\times[0,T]}e^{-qu^\eps+q\|u_{0}\|_{L^{\infty}}-1}\,\d V\d t
+\int_{Y\times[0,T]}\log(q^{-1}e^{F_\eps})q^{-1}e^{F_\eps}\,\d V\d t \\
	\leq {} & TK+q^{-1}\int_{Y\times[0,T]}(F_\eps+1) e^{F_\eps}\,\d V\d t 
	\leq K,
\end{align*}
where we used Proposition \ref{prop: uniform_skoda_eps} and Lemma \ref{eq:unif-Lp} in the last line. Therefore, together with \eqref{eq:M-T-ineq}, we have \eqref{eq:M-T-bound}.
\end{proof}

\begin{lemma}
	\label{lem:unif_De Giorgi}
	Denote 
	\begin{align*}
		\phi(s)\coloneqq\int_{Y \times [0,T] \cap \{u^\eps<-s\}}e^{F_\eps}\,\d Vdt
 	=\int_{Y \times [0,T] \cap \{u^\eps<-s\}} e^{-u^\eps+L_\eps(u^\eps)} g_\eps\, \d V dt.
	\end{align*}
    For any $r>n+1$, set $\delta\coloneqq\frac{1}{n+1}-\frac{1}{r}$, there exists a uniform constant $K_{\delta}$  (independent of $\varepsilon$) such that, for any $a>0$, $s\geq s_0:=\|u_0\|_{L^\infty}$,
	\begin{align*}
		a\phi(s+a)\leq K_{\delta}\phi(s)^{1+\delta}.
	\end{align*}
\end{lemma}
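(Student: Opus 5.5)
The plan is a De Giorgi-type argument in the style of Guo--Phong--Tong and Chen--Cheng. The energy-type quantity $A_s$ of Lemma~\ref{prop:M-T-ineq} links $\phi(s+a)$ and $\phi(s)$, and Lemma~\ref{prop:M-T-ineq} together with the entropy bound of Lemma~\ref{eq:unif-Lp} closes the estimate. Write $\Omega_s\coloneqq\{(x,t)\in Y\times[0,T]: u^\eps<-s\}$, so that $\phi(s)=\int_{\Omega_s}e^{F_\eps}\,\d V\d t$.

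\emph{Step 1 (trivial lower bound).} On $\Omega_{s+a}$ we have $(-u^\eps-s)^+\geq a$, and $(-u^\eps-s)^+$ vanishes off $\Omega_s$. Hence
\[
a\,\phi(s+a)\leq \int_{\Omega_s}(-u^\eps-s)^+e^{F_\eps}\,\d V\d t = A_s ,
\]
and it suffices to prove $A_s\leq K_\delta\,\phi(s)^{1+\delta}$.

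\emph{Step 2 (rewriting $A_s$ through the Moser--Trudinger weight).} Set
\[
w\coloneqq c_nq\,A_s^{-\frac{1}{n+1}}\bigl((-u^\eps-s)^+\bigr)^{\frac{n+2}{n+1}}.
\]
Then $(-u^\eps-s)^+=C\,A_s^{\frac{1}{n+2}}\,w^{\frac{n+1}{n+2}}$, and therefore
\[
A_s\leq C\,A_s^{\frac{1}{n+2}}\int_{\Omega_s} w^{\beta}e^{F_\eps}\,\d V\d t,
\qquad \beta\coloneqq\tfrac{n+1}{n+2}.
\]
Integrating \eqref{eq:M-T-bound} in time gives $\int_{Y\times[0,T]}e^{w}\,\d V\d t\leq KT$, uniformly in $\eps$.

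\emph{Step 3 (Orlicz/Young duality).} Use the Young inequality for the complementary pair $\Phi(y)=e^{y^{1/\beta}}-1$ and $\Psi(x)\sim x(\log(e+x))^{\beta}$, in Luxemburg-norm (Hölder) form:
\[
\int_{\Omega_s}w^\beta e^{F_\eps}\leq 2\,\|w^\beta\|_{L^\Phi}\,\|e^{F_\eps}\mathbf 1_{\Omega_s}\|_{L^\Psi}.
\]
The first factor is uniformly bounded by Step 2. For the second factor, the normalized integral $\int_{\Omega_s} e^{F_\eps}(1+|F_\eps|)^{\beta}$ is controlled by Hölder's inequality with exponents $r/\beta$ and $(1-\beta/r)^{-1}$ against the measure $e^{F_\eps}\d V\d t$:
\[
\int_{\Omega_s}e^{F_\eps}(1+|F_\eps|)^\beta\leq \Bigl(\int_{Y\times[0,T]}(F_\eps^2+1)^{\frac r2}e^{F_\eps}\Bigr)^{\frac{\beta}{r}}\phi(s)^{1-\frac{\beta}{r}}\leq C\,T\,K_r\,\phi(s)^{1-\frac\beta r},
\]
using Lemma~\ref{eq:unif-Lp}. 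Turning this into a bound for the Luxemburg norm gives $\|e^{F_\eps}\mathbf 1_{\Omega_s}\|_{L^\Psi}\lesssim \phi(s)^{1-\beta/r}$, up to harmless logarithmic factors that can be absorbed by slightly decreasing $r$. Since $r>n+1$ is arbitrary, this loses nothing.

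\emph{Step 4 (conclusion).} Steps 2 and 3 give $A_s^{\frac{n+1}{n+2}}\leq C\phi(s)^{1-\beta/r}$. Hence
\[
A_s\leq C\phi(s)^{\frac{n+2}{n+1}-\frac1r}=C\phi(s)^{1+\delta},
\]
and combining with Step 1 proves the lemma. The case where $\phi(s)$ is large is harmless, because $\phi(s)\leq T$: indeed $\int_Y e^{F_\eps}\,\d V=1$ for each $t$.

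\emph{Expected obstacle.} The main difficulty is Step 3, namely running the Orlicz/Young inequality so that the uniformly bounded but non-small term $\int e^{w}$ enters only multiplicatively. A naive additive Young inequality would give $A_s\lesssim A_s^{1/(n+2)}\bigl(K+\phi(s)^{\dots}\bigr)$, which is useless. The point is the scaling: either use the Hölder form for Luxemburg norms, or apply additive Young to $\lambda w^\beta$ and $\lambda^{-1}e^{F_\eps}$ and optimize over $\lambda$. Either way, one must check that the constants depend only on $K_r$ from Lemma~\ref{eq:unif-Lp}, on $K$ from Lemma~\ref{prop:M-T-ineq}, on $T$ and on $n$, and in particular are independent of $\eps$.
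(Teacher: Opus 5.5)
Your proposal is correct. Step~1 and the final exponent bookkeeping coincide with the paper's; the difference is in how you pair the Moser--Trudinger bound of Lemma~\ref{prop:M-T-ineq} with the entropy bound of Lemma~\ref{eq:unif-Lp}.

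\textbf{The paper's route.} It uses no Orlicz duality. It integrates the pointwise inequality $x^re^y\leq e^y(1+y)^r+C(r)e^{2x}$ with $2x=w$ and $y=\log(e^{F_\eps}+1)$. This gives the moment bound
\[
\int_{Y\times[0,T]}\bigl((-u^\eps-s)^+\bigr)^{r\frac{n+2}{n+1}}e^{F_\eps}\,\d V\d t\leq K A_s^{\frac{r}{n+1}} .
\]
Ordinary H\"older on $\Omega_s$ against $e^{F_\eps}\,\d V\d t$ then yields $A_s\leq KA_s^{\frac{1}{n+2}}\phi(s)^{1-\frac{n+1}{r(n+2)}}$. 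This is exactly the inequality you reach after Steps~2--3, since $\beta/r=\frac{n+1}{r(n+2)}$.

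So the additive Young inequality you dismiss as useless is in fact enough, provided it is applied to the high power $w^r$ rather than to $w^\beta$. It only has to produce a uniform bound; the gain in $\phi(s)$ comes afterwards, from H\"older with the indicator of $\Omega_s$.

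\textbf{Your route.} It makes the duality between exponential integrability and $L\log^\beta L$ explicit, but it needs two facts you only sketch:
\begin{enumerate}
\item the bound $\Psi(x)\leq Cx(\log(e+x))^\beta$ for the complementary function;
\item the Luxemburg estimate for $e^{F_\eps}\mathbf 1_{\Omega_s}$.
\end{enumerate}
The second holds with no logarithmic loss, so there is no need to perturb $r$. Take $\lambda=M\phi(s)^{1-\beta/r}$ with $M\geq1$. Use $\log(e+e^{F_\eps}/\lambda)\leq\log(e+e^{F_\eps})+\log(1+\lambda^{-1})$, subadditivity of $t\mapsto t^\beta$, and your H\"older bound. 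This gives
\[
\int\Psi\bigl(e^{F_\eps}\mathbf 1_{\Omega_s}/\lambda\bigr)\,\d V\d t\leq \frac{C}{M}+\frac{C}{M}\,\phi(s)^{\beta/r}\log\bigl(1+\phi(s)^{-(1-\beta/r)}\bigr)^{\beta}.
\]
The last term is bounded because $\phi(s)\leq T$, so a large $M$ makes the right-hand side at most $1$.

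There is also a harmless slip in Step~3: the constant should be $(CTK_r)^{\beta/r}$, not $CTK_r$.

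\textbf{Comparison.} The paper's argument is shorter and fully elementary. Yours is more conceptual but heavier, and both land on the same intermediate estimate.
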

\begin{proof}
	Applying the elementary inequality $x^re^y\leq e^y(1+y)^r+C(r)e^{2x}$ (see \cite[Lemma~2.4]{CC23}) with
	\begin{align*}
		x=\frac{c_n q}{2}A_{s}^{-\frac{1}{n+1}}((-u^\eps-s)^+)^{\frac{n+2}{n+1}}, \quad
		y=\log(e^{F_\eps}+1),
	\end{align*}
	we have
	\begin{align*}
		& \left(\frac{c_n q}{2}\right)^rA_{s}^{-\frac{r}{n+1}}((-u^\eps-s)^+)^{r\frac{n+2}{n+1}}(e^{F_\eps}+1) \\
		\leq {} & (e^{F_\eps}+1)\big(1+\log(e^{F_\eps}+1)\big)^r
		+C(r)e^{c_n q A_{s}^{-\frac{1}{n+1}}((-u^\eps-s)^+)^{\frac{n+2}{n+1}} }.
	\end{align*}
	Integrating both sides on $Y\times[0,T]$, and using Lemma \ref{eq:unif-Lp}  and Lemma \ref{prop:M-T-ineq}, 
	\begin{align*}
		\int_{Y\times[0,T]}((-u^\eps-s)^+)^{r\frac{n+2}{n+1}}e^{F_\eps}\d V\d t
		\leq K A_{s}^{\frac{r}{n+1}}.
	\end{align*}
	Combining this with H\"older's inequality,
	\begin{align*}
		A_s
		= {} &\int_{Y\times[0,T]}(-u^\eps-s)^+ e^{F_\eps} \,\d V\d t \\
		\leq {} &\left( \int_{Y\times[0,T]}((-u^\eps-s)^+)^{r\frac{n+2}{n+1}} e^{F_\eps} \,\d V\d t\right)^{\frac{n+1}{r(n+2)}}\cdot
		\left( \int_{\{u^\eps<-s\}}e^{F_\eps} \,\d V\d t\right)^{1-\frac{n+1}{r(n+2)}} \\
		\leq {} & K^{\frac{n+1}{r(n+2)}} A_{s}^{\frac{1}{n+2}}\left( \int_{\{u^\eps<-s\}}e^{F_\eps} \,\d V\d t\right)^{1-\frac{n+1}{r(n+2)}},
	\end{align*}
	which implies
	\begin{align*}
		A_s\leq K^{\frac{1}{r}}\left( \int_{\{u^\eps<-s\}}e^{F_\eps}\, \d V\d t\right)^{\frac{n+2}{n+1}-\frac{1}{r}}.
	\end{align*}
	In addition, by the definition of $A_s$, one has
	\begin{align*}
		A_s\geq a\int_{\{u^\eps<-s-a\}}e^{F_\eps}\, \d V\d t.
	\end{align*}
	Then combining the above inequalities, we finish the proof.
\end{proof}

To obtain our uniform $C^0$ estimate we will use to following well known De Giorgi type iteration lemma:

\begin{lemma}[De Giorgi]
\label{lem: De_Giorgi_it}Let $\phi:\RR^+\rightarrow\RR^+$ be a decreasing function with constants $s_{0},q,\delta, B>0$ such that
\begin{align}
\label{eq: De_Giorgi_it-ineq}
r^q \phi(s+r)\leq B \phi(s)^{1+\delta} \quad \text{for $s\geq  s_0$ and $r>0$}.
\end{align}
Then $\phi(s)\equiv0$ for $s\geq 2^{1+\frac1\delta}B^{\frac1q} \phi(s_0)^{\frac{\delta}q}+s_0$.
\end{lemma}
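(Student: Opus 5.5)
The plan is the standard dyadic iteration. Fix $s\geq s_0$ to be chosen, and define the increasing sequence of levels
\[
s_k\coloneqq s_0+d\bigl(1-2^{-k}\bigr),\qquad k\geq 0,
\]
where $d>0$ is a parameter fixed at the end. Since $\phi$ is decreasing and $s_k\uparrow s_0+d$, it suffices to show $\phi(s_k)\to0$. Then $\phi(s_0+d)\leq\phi(s_k)$ for every $k$ forces $\phi(s_0+d)=0$. Monotonicity extends this to $\phi(s)=0$ for all $s\geq s_0+d$.

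\textbf{Step 1 (recursive inequality).} Apply \eqref{eq: De_Giorgi_it-ineq} with $s=s_k$ and $r=s_{k+1}-s_k=d\,2^{-(k+1)}$. This gives
\[
\phi(s_{k+1})\leq B\,d^{-q}\,2^{q(k+1)}\,\phi(s_k)^{1+\delta}.
\]

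\textbf{Step 2 (geometric decay by induction).} Guess the decay $\phi(s_k)\leq\phi(s_0)\,2^{-\mu k}$ with $\mu\coloneqq q/\delta$. The case $k=0$ is trivial. For the inductive step, the recursion yields
\[
\phi(s_{k+1})\leq B d^{-q}2^{q}\,\phi(s_0)^{1+\delta}\,2^{qk-\mu(1+\delta)k}.
\]
With $\mu=q/\delta$ we have $qk-\mu(1+\delta)k=-\mu k-qk+qk\cdot 0$. More precisely $\mu(1+\delta)=\mu+q$, so the exponent equals $-\mu k$. We need the right-hand side to be at most $\phi(s_0)2^{-\mu(k+1)}$, which reduces to
\[
B d^{-q}2^{q+\mu}\phi(s_0)^{\delta}\leq 1,
\qquad\text{i.e.}\qquad
d\geq 2^{1+\frac1\delta}B^{\frac1q}\phi(s_0)^{\frac{\delta}{q}}.
\]
Choosing $d$ equal to this quantity closes the induction. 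If $\phi(s_0)=0$ there is nothing to prove.

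\textbf{Step 3 (conclusion).} Letting $k\to\infty$ gives $\phi(s_0+d)=0$, which is exactly the stated threshold.

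The only delicate point is the bookkeeping of exponents in the induction: one must check that $\mu=q/\delta$ makes the $k$-dependence cancel, and that the constant works out to $2^{(q+\mu)/q}=2^{1+1/\delta}$. No analytic obstacle arises, since the lemma is purely real-variable.
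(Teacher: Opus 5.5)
Your proof is correct and is essentially the same as the paper's. You use the same dyadic levels $s_k=s_0+d(1-2^{-k})$, the same inductive claim $\phi(s_k)\le\phi(s_0)2^{-qk/\delta}$, and the same choice $d=2^{1+\frac1\delta}B^{\frac1q}\phi(s_0)^{\frac{\delta}{q}}$. The intermediate identity $qk-\mu(1+\delta)k=-\mu k-qk+qk\cdot 0$ is garbled as written, but your corrected version (using $\mu(1+\delta)=\mu+q$, so the exponent is $-\mu k$) is right; you also treat the degenerate case $\phi(s_0)=0$, which the paper leaves implicit.
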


\begin{proof}
We introduce the quantities
	\begin{align*}
		d \coloneqq 2^{1+\frac{1}{\delta}}B^{\frac{1}{q}} \phi(s_0)^{\frac{\delta}{q}},
        \qquad s_j \coloneqq s_0+d(1-2^{-j}).
	\end{align*}
We claim $\phi(s_j)\leq \phi(s_0)2^{-\frac{qj}\delta}$. Given this claim, by letting $j\to\infty$, one obtains $\phi(s_0+d)=0$ and then $\phi(s)\equiv0$ for $s\geq d+s_0$. Next we prove the claim by induction argument. It holds for $j=0$. Suppose it holds for $j$.
	Applying \eqref{eq: De_Giorgi_it-ineq} for $s=s_j$ and $r=s_{j+1}-s_j=d2^{-j-1}$, one has
	\begin{align*}
		\phi(s_{j+1})
		\leq d^{-q}2^{q(j+1)}B\phi(s_j)^{1+\delta}
		\leq d^{-q}2^{q(j+1)}B\phi(s_0)^{1+\delta}2^{-\frac{qj}{\delta}(1+\delta)}
		=\phi(s_0)2^{-\frac{q(j+1)}\delta}.
	\end{align*}
\end{proof}

Now we are ready to prove Proposition \ref{prop:unif_Linfty} and Proposition \ref{prop:sub_stab_holder}.

\begin{proof}[Proof of Proposition \ref{prop:unif_Linfty}]
Recall that $u^{\eps}\leq u_{0}$. It then suffices to establish the lower bound of $u^{\eps}$. Fix $r>n+1$ and $\delta=\frac{1}{n+1}-\frac{1}{r}$. Note that $\phi(s)$ is decreasing with respect to $s>0$.
By Lemma \ref{lem:unif_De Giorgi} and Lemma \ref{lem: De_Giorgi_it} with $q=1$ and $B=K_\delta$, we have
\begin{align*}
	\phi(s)\equiv0 \quad\text{ for }s\geq2^{1+\frac{1}{\delta}}K_\delta\phi(s_0)^{\delta}+s_0,
\end{align*}
where $s_0=\|u_0\|_{L^\infty}$. Together with
\begin{align*}
	\phi(s_0)=\int_{\{u^\eps<-s_0 \}}e^{-u^\eps+L_\eps(u^\eps)} g_\eps \, \d V \d t
	\leq \int_{Y\times[0,T]}e^{-u^\eps+L_\eps(u^\eps)} g_\eps\ \d V \d t=T,
\end{align*}
and using the continuity of $u^{\eps}$ and the strict positivity of $e^{F_\eps}$, we obtain the pointwise lower bound of $u^{\eps}$:
\begin{align*}
	u^\eps\geq -2^{1+\frac{1}{\delta}}K_\delta T^{\delta}-\|u_0\|_{L^\infty}.
\end{align*}
\end{proof}

\begin{proof}[Proof of Proposition \ref{prop:sub_stab_holder}]
The stability estimate \eqref{eq:quasi-decreasing} and time H\"older estimate \eqref{eq:time Holder} can be proved by modifying \cite[Theorem~2.1 and Proposition 2.5]{CC23} and using Lemma~\ref{lem: uniform_skoda}. The argument is analogous to that of Proposition \ref{prop:unif_Linfty}  via the suitable auxiliary flow. We provide a sketch of the proof here and explain the explicit uniform constant in our case.

\medskip
\noindent
{\bf Step 1:} Let $\delta\in(0,1)$ and $v\in C^\infty(Y\times[0,T])$ such that $\dot v\leq0$ and $\omega_{\eps'}+\ddc v\geq-\frac{\delta}{2}\omega$. Let $s>0$ and consider the function $(1-\delta)v-u^{\eps'}-s$.
We consider the following auxiliary flow:
\begin{equation}\label{eq:auxiliary_equ-stability}
\begin{cases}
V_{\omega_{\eps'}}^{-1}(-\dot{\psi}_{j,\delta})(\omega_{\eps'}+\ddc{\psi}_{j,\delta})^n
 	=A_{j,\delta,s}^{-1}\cdot\eta_j\Big((1-\delta)v-u^{\eps'}-s\Big) e^{F_{\eps'}}\,\d V
& \mbox{in $Y\times[0,T]$},  \\[1mm]
\psi_{j,\delta}(t)\in\PSH_{\omega_{\eps'}} & \mbox{on $Y\times\{t\}$}, \\[1mm]
\psi_{j,\delta}  = 0 & \mbox{on $Y\times\{0\}$},
\end{cases}
\end{equation}
where 
\begin{align*}
    \psi_{j,\delta}\coloneqq\psi_{\eps',j,\delta},
    \quad
	A_{j,\delta,s}\coloneqq A_{\eps',j,\delta,s}\coloneqq \int_{Y\times [0,T]}\eta_j((1-\delta)v-u^{\eps'}-s) e^{F_{\eps'}}\d V \d t.
\end{align*}
Let $s>\|((1-\delta)v_0-u_0^{\eps'})^+\|_{L^\infty}$, then there exist dimensional constants $c_n>0$ and $C_n>0$ such that
	\begin{align}
	\label{eq:Phi-delta-leq0}
		c_n A_{j,\delta,s}^{-\frac{1}{n+2}}((1-\delta)v-u^{\eps'}-s)
		\leq (-\psi_{j,\delta}+C_n\delta^{-(n+2)} A_{j,\delta,s})^{\frac{n+1}{n+2}} \ \ \ \textup{ on } Y \times [0,T].
	\end{align}
	The proof of \eqref{eq:Phi-delta-leq0} is very similar to the proof of Lemma~\ref{lem:Phi_leq0} (cf. \cite[Lemma~2.6]{CC23}).

\medskip
\noindent
{\bf Step 2:}
There exists a dimensional constant $q_n >0$ such that for any $s\geq\|((1-\delta)v_0-u_0)^+\|_{L^\infty}$ and $0<\delta<1$,
	\begin{align}
	\label{eq:M-T-bound-delta}
		\sup_{t \in [0,T]} \int_Y\exp\left(q_n A_{\delta,s}^{-\frac{1}{n+1}}\Big(((1-\delta)v-u_t^{\eps'}-s)^+\Big)^{\frac{n+2}{n+1}} \right)\d V
		\leq \exp(K\delta^{-(n+2)}A_{\delta,s}),
	\end{align}
where
\begin{align*}
	A_{\delta,s}\
	\coloneqq A_{\eps',\delta,s}\coloneqq&\int_{Y\times[0,T]}((1-\delta)v-u^{\eps'}-s)^+ e^{F_{\eps'}}\, \d V\d t \\
	=&\int_{Y\times[0,T]}((1-\delta)v-u^{\eps'}-s)^+ e^{-u^{\eps'}+L_\eps(u^{\eps'})} g_{\eps'} \d V \d t.
\end{align*}
Indeed, by \eqref{eq:Phi-delta-leq0} and uniform integrability Lemma~\ref{lem: uniform_skoda}, we have for $q>1$
     \begin{align*}
     	\int_Y\exp\left(qc_n^{\frac{n+2}{n+1}}  A_{j,\delta,s}^{-\frac{1}{n+1}}(((1-\delta)v-u^{\eps'}-s)^+)^{\frac{n+2}{n+1}} \right)\d V
     	\leq e^{C_n q\delta^{-(n+2)} A_{j,\delta,s}} \int_Y e^{-q\psi_{j,s}}\, \d V
     	\leq Ke^{K\delta^{-(n+2)} A_{j,\delta,s}}.
     \end{align*}
     Letting $j\rightarrow\infty$ and $q_n\coloneqq qc_n^{\frac{n+2}{n+1}}$, we obtain \eqref{eq:M-T-bound-delta}.

\medskip
\noindent
{\bf Step 3:} Let $s_0$ be chosen such that $s_0\geq\|((1-\delta)v_0-u_0^{\eps'})^+\|_{L^\infty}$ and $A_{\delta,s_0}\leq \delta^{n+2}$. Then for any $p>1$,
\begin{align*}
	\|((1-\delta)v-u^{\eps'}-s)^+\|_{L^p(\d V\d t)}
	\leq K(p)A_{\delta,s}^{\frac{1}{n+2}},
\end{align*} 
where $K(p)=K(T,p,q_n)$. 

This is an immediate application of Step 2. 

\medskip
\noindent
{\bf Step 4:} For any $0<\eta<\frac{1}{p_0^*(n+1)}$, there exists a uniform constant $B_0>0$ such that for any $r>0$ and $s\geq s_0$ ($s_0$ is defined in Step 3),
\begin{align*}
	r\vol(\Omega_{\delta,s+r})
	\leq B_0\vol(\Omega_{\delta,s})^{1+\eta},
\end{align*}
where $\Omega_{\delta,s}\coloneqq\{(1-v)v-u^{\eps'}-s>0\}$.

The proof is exactly same as \cite[Lemma~2.8]{CC23}. The constant $\eta$ and$B_0$ can be computed as
\begin{align*}
1+\eta=\Big(1+\frac{1}{p_0^*(n+1)}\Big)(1-\gamma^{-1}), \quad
	B_0=K(\gamma)K(p_0^*\gamma)^{\frac{1}{n+1}}\|e^{F_{\eps'}}\|_{L^{p_0}}^{\frac{1}{n+1}},
\end{align*}
where $\gamma>1$ is arbitrary and $K(\gamma)$ is the constant in Step 3.

\medskip
\noindent
{\bf Step 5:} Applying De Giorgi type Lemma \ref{lem: De_Giorgi_it} to Step 4, we obtain
\begin{align*}
	\sup_{Y\times[0,T]}((1-\delta)v-u^{\eps'})
	\leq s_0+K_1\vol(\Omega_{\delta,s})^\eta,
\end{align*}
where $K_1=\frac{2B_0}{1-2^{-\eta}}$.
Furthermore, if $s_0\geq 2\delta\|v\|_{L^\infty}$, then
\begin{align*}
	\vol(\Omega_{\delta,s_0})
	\leq {} &\frac{1}{s_0}\int_{\Omega_{\delta,s_0}}((1-\delta)v-u^{\eps'})^+\d V\d t \\
	\leq {} &\frac{1}{s_0}\Big(\|(v-u^{\eps'})^+\|_{L^1}+\delta\|v\|_{L^\infty}\vol(\Omega_{\delta,s_0}) \Big) 
	\leq \frac{2}{s_0}\|(v-u^{\eps'})^+\|_{L^1}.
\end{align*}
Therefore, if $s_0\geq\|((1-\delta)v_0-u_0)^+\|_{L^\infty}$, $A_{\delta,s_0}\leq \delta^{n+2}$ and $s_0\geq 2\delta\|v\|_{L^\infty}$, we have
\begin{align}
\label{eq:stability-partial}
	\sup_{Y\times[0,T]}(v-u^{\eps'})
	\leq\frac{3}{2} s_0+K_2s_0^{-\eta}\|(v-u^{\eps'})^+\|_{L^1}^\eta.
\end{align}

\medskip
\noindent
{\bf Step 6:} Given $\delta\in(0,1)$, one defines
\begin{align*}
	s_*(\delta)\coloneqq\{s_0 \text{ satisfies the three conditions in Step 5} \}.
\end{align*}
Then
\begin{align}
\label{eq:stability-partial-inf}
	\sup_{Y\times[0,T]}(v-u^{\eps'})
	\leq\inf_{s_0\geq s_0(\delta)}\left(\frac{3}{2} s_0+K_2s_0^{-\eta}\|(v-u^{\eps'})^+\|_{L^1}^\eta\right).
\end{align}
Given $\gamma>1$, there exist a constant $K_1(\gamma)>0$ such that
\begin{align}
	\label{eq:estimate-s_*(delta)}
	s_*(\delta)
	\leq\max\left\{2\|(v_0-u_0)^+\|_{L^\infty}, \, 2\delta\|v\|_{L^\infty},\, 
	K_1(\gamma)\delta^{-\frac{p_0^*(n+1)}{1-\beta^{-1}}}\|(v-u^{\eps'})^+\|_{L^1} \right\}.
\end{align}
The proof of \eqref{eq:estimate-s_*(delta)} is exactly same as \cite[Lemma~2.9]{CC23}. In our case, 
\begin{align*}
	K_1(\gamma)=2\|e^{F_{\eps'}}\|_{L^{p_0}}^{\frac{p_0^*}{1-\gamma^{-1}}}K(\gamma p_0^*)^{\frac{p_0^*}{1-\gamma^{-1}}},
\end{align*}
where $K(\gamma p_0^*)$ is the constant defined in Step 3.

\medskip
\noindent
{\bf Step 7:} applying $v=u^\eps$, then $\omega_{\eps'}+\ddc u^\eps\geq (\eps'-\eps)\omega_Y\geq0$ and $v_0=u_0^\eps=u_0^{\eps'}$. We prove \eqref{eq:quasi-decreasing} by adapting the method in \cite[Theorem~2.1]{CC23}.
We may assume that $\|(u^{\eps}-u^{\eps'})^+\|_{L^1}<1$. 
We take $\delta=\|(u^{\eps}-u^{\eps'})^+\|_{L^1}^{\frac{1}{1+p_0^*(n+1)}}$ and $\gamma\gg1$ such that $1-\frac{p_0^*(n+1)}{1+p_0^*(n+1)}\frac{1}{1-\gamma^{-1}}>\beta$, where $\beta\in(0,\frac{1}{1+p_0^*(n+1)})$ is the constant in the hypothesis of Proposition~\ref{prop:sub_stab_holder}.
We define $K_3\coloneqq\max\{2\|u^\eps\|_{L^\infty}, K_1(\gamma)\}$.
We claim:
\begin{align}
\label{eq:special-s_0}
	s_0\coloneqq K_3\|(u^{\eps}-u^{\eps'})^+\|_{L^1}^\beta\geq s_*(\delta)
	=\max\left\{2\delta\|u^\eps\|_{L^\infty},\, 
	K_1(\gamma)\delta^{-\frac{p_0^*(n+1)}{1-\beta^{-1}}}\|(u^\eps-u^{\eps'})^+\|_{L^1} \right\}.
\end{align}
This is clear from our choice for $\delta$, $K_3$ and $\gamma$.

Once the Claim holds. By \eqref{eq:stability-partial-inf} and choosing $\eta$ such that $\beta=\frac{\eta}{1+\eta}$,
\begin{align*}
	\sup_{Y\times[0,T]}(v-u^{\eps'})
	\leq {} &\frac{3}{2} K_3\|(u^{\eps}-u^{\eps'})^+\|_{L^1}^\beta
	+K_2K_3^{-\eta}\|(u^{\eps}-u^{\eps'})^+\|_{L^1}^{-\beta\eta}\|(v-u^{\eps'})^+\|_{L^1}^\eta \\
	\leq {} &\Big(\frac{3}{2}K_3+K_2K_3^{-\eta}\Big)\|(u^{\eps}-u^{\eps'})^+\|_{L^1}^\beta.
\end{align*}
We obtain \eqref{eq:quasi-decreasing} by letting $K_\beta=\frac{3}{2}K_3+K_2K_3^{-(1-\beta)^{-1}+1}$.

\medskip
\noindent
{\bf Step 8:} Once \eqref{eq:quasi-decreasing} is proved. By applying the local time-integration technique and \cite[Lemma~2.10]{CC23}, we immediately obtain \eqref{eq:time Holder}. The proof is exactly same as \cite[Prop~2.5, see page~2891]{CC23}.
\end{proof}

Next we turn our attention to a local uniform $t$-Lipschitz estimate. The key idea is to compare the evolving metric with a time independent metric whose volume form is proportional to $g_\eps\,\d V$.

\begin{proposition}[local $t$-Lipschitz estimate]
\label{prop:local_t-deriv_upper}For all $\tau \in (0,T)$, there is a uniform constant $K_{\tau,T}$ (independent of $\varepsilon$) such that
\[
0<(-\dot u^\eps)\leq K_{\tau,T}
\quad\text{on }Y\times[\tau,T].
\]
\end{proposition}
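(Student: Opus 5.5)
The plan is a parabolic maximum principle on $Y\times[0,T]$ for the smooth solution $u^\eps$ of \eqref{eq:eps_IMAF T}, applied to a test function that combines $w:=-\dot u^\eps$ with the fixed potential $\varphi_\eps$ described below. The cutoff factor $t$ in the test function is what restricts the bound to $[\tau,T]$. Positivity $0<w$ is already known.

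\textbf{Step 1 (time-independent comparison metric).} Solve Yau's equation for $\varphi_\eps\in\PSH_{\omega_\eps}\cap C^\infty(Y)$:
\[
(\omega_\eps+\ddc\varphi_\eps)^n=c_\eps\, g_\eps\,\d V,\qquad \sup_Y\varphi_\eps=0 .
\]
The constant $c_\eps$ is fixed by cohomology and is uniformly bounded above and below, since $\int_Y g_\eps\,\d V\to\int_Y\hat\mu$. The densities $g_\eps$ are uniformly bounded in $L^{p_0}$, and $[\omega_\eps]$ ranges over a family of semipositive big classes dominating $\pi^*\omega$. Hence the uniform Ko{\l}odziej / Eyssidieux--Guedj--Zeriahi estimate gives $\|\varphi_\eps\|_{L^\infty}\le K$ independently of $\eps$. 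With $\omega':=\omega_\eps+\ddc u^\eps$, the flow equation can then be rewritten as
\[
\log w = \log(V_{\omega_\eps}/c_\eps) -u^\eps+L_\eps(u^\eps)+\log\frac{\omega_{\varphi_\eps}^n}{\omega'^n}.
\]
This identity is where the singular weights $\psi^\pm$ cancel out completely.

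\textbf{Step 2 (evolution inequalities).} Put $\cL:=-\p_t+w\Delta_{\omega'}$, as in Lemma~\ref{lem:Phi_leq0}. Differentiating the logarithmic equation in $t$ and using $\frac{\d}{\d t}L_\eps=-\int_Y w\,\d\nu_t$, the $t$-derivative of $L_\eps$ cancels against $\cL(-L_\eps)$, and one gets
\[
\cL(\log w - L_\eps)=-w-w|\nabla\log w|^2_{\omega'}.
\]
Two further formulas are immediate:
\[
\cL u^\eps = w+w(n-\tr_{\omega'}\omega_\eps),\qquad \cL\varphi_\eps = w(\tr_{\omega'}\omega_{\varphi_\eps}-\tr_{\omega'}\omega_\eps).
\]
Thus $\cL(\varphi_\eps-u^\eps)=w\,\tr_{\omega'}\omega_{\varphi_\eps}-(n+1)w$. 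The good term is $w\,\tr_{\omega'}\omega_{\varphi_\eps}$. By AM--GM together with the identity of Step 1, it satisfies
\[
\tr_{\omega'}\omega_{\varphi_\eps}\ \ge\ n\Big(\frac{\omega_{\varphi_\eps}^n}{\omega'^n}\Big)^{1/n}\ \ge\ c\, w^{1/n},
\]
with a uniform constant $c>0$. Here we use Proposition~\ref{prop:unif_Linfty} and the uniform bound on $L_\eps(u^\eps)$ that follows from it.

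\textbf{Step 3 (maximum principle).} Consider
\[
Q:=\log t+\log w-L_\eps(u^\eps)+B(\varphi_\eps-u^\eps),
\]
possibly replacing $B(\varphi_\eps-u^\eps)$ by $h(\varphi_\eps-u^\eps)$ for a suitable concave $h$. At a maximum point $(x_0,t_0)$ with $t_0>0$, we have $\cL Q\le0$. Combining Step 2 with $\cL\log t=-1/t$ yields an inequality of the form
\[
B\,w\,\tr_{\omega'}\omega_{\varphi_\eps}\ \lesssim\ \tfrac1t+(B(n+1)+1)w+w|\nabla\log w|^2 .
\]
Then Step 2 gives $w^{1+1/n}\lesssim w+1/t$ at $(x_0,t_0)$, so $t_0w\le K$ there. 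Since $u^\eps$, $\varphi_\eps$ and $L_\eps$ are uniformly bounded, this bounds $Q$, and hence $w\le K/t\le K/\tau$ on $[\tau,T]$.

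\textbf{Main obstacle.} The hard part is the sign of the gradient term $-w|\nabla\log w|^2$: as computed it has the wrong sign for an upper bound. At the maximum, $\nabla\log w=B\nabla(u^\eps-\varphi_\eps)$, so this term becomes $B^2w|\nabla(u^\eps-\varphi_\eps)|^2_{\omega'}$. I would first try to absorb it via the choice of $h$. With $Q=\log(tw)-L_\eps+h(\varphi_\eps-u^\eps)$, the term $h''|\nabla(\varphi_\eps-u^\eps)|^2$ generated by $\cL h$ can be arranged, in the style of Phong--Sturm or Chen--Cheng type auxiliary functions (for instance $h(s)=-\log(K'-s)$ on the uniformly bounded range of $s$), to dominate $(h')^2|\nabla(\varphi_\eps-u^\eps)|^2$ with the correct sign. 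If this fails, I would work instead with $Q=t\,w\,e^{-B(u^\eps-\varphi_\eps)}$, whose evolution avoids the logarithmic gradient term, and bound the term $w\int_Y w\,\d\nu_t$ using the monotonicity $E_\eps(u^\eps_t)=E_\eps(u_0)-t$.
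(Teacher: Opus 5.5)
Your Steps 1--2 are correct and match the paper's setup: the same Yau potential $\varphi_\eps$, the same cancellation of $g_\eps$, and the same bound $\tr_{\omega'}\omega_{\varphi_\eps}\ge c\,w^{1/n}$. The problem is Step 3. With the linear term $B(\varphi_\eps-u^\eps)$ the argument does not close, because of the term $-wB^2|\nabla(u^\eps-\varphi_\eps)|^2_{\omega'}$ you point out, and nothing in the setup controls it. Your second fallback does not help either. The function $t\,w\,e^{-B(u^\eps-\varphi_\eps)}$ is the exponential of the same test function, so at its maximum you get literally the same inequality. The $\dot L_\eps$ term never needed attention, since $-\dot L_\eps=\int_Y w\,\d\nu_t\ge 0$ already has the good sign.

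The repair that works is your example $h(s)=-\log(K'-s)$. However, it is convex, not concave, and the condition actually needed is $h''\ge (h')^2$. Put $s=\varphi_\eps-u^\eps$ and $Q=\log(tw)-L_\eps+h(s)$. At a maximum $\nabla\log w=-h'(s)\nabla s$, and one gets
\[
\cL Q=-\frac1t-w+h'(s)\,w\bigl(\tr_{\omega'}\omega_{\varphi_\eps}-(n+1)\bigr)+w\bigl(h''(s)-h'(s)^2\bigr)|\nabla s|^2_{\omega'} .
\]
A concave $h$ would only add another bad term. Your $h=-\log(K'-s)$ has $h''=(h')^2$, so the gradient term cancels exactly.

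Take $K'=1+U_T$, with $U_T$ the uniform $C^0$ bound. Then $1\le K'-s\le B_T$, so $h'\in[B_T^{-1},1]$. The maximum sits at some $t_0>0$, since $\log t\to-\infty$ and, for fixed $\eps$, $w$ is continuous and positive on $Y\times[0,T]$. At that point you get $(c/B_T)\,w^{1+1/n}\le t^{-1}+(n+2)w$, hence $w\le K_T/t$ as you claimed.

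Note that $e^{Q}=t\,e^{-L_\eps}\,w/(1+U_T+u^\eps-\varphi_\eps)$. This is precisely the paper's quotient $w/B$ with $B=1+U_T+u-\varphi$. In the paper the same cancellation appears as the vanishing of the cross terms in the product rule at a spatial maximum.

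The remaining difference is bookkeeping. The paper applies Hamilton's trick to $M(t)=\max_Y w/B$ and integrates $M'\le A_TM^2-c_TM^{2+1/n}$. This gives the rate $t^{-n/(n+1)}$, which is used for the improved H\"older estimate in the subsequent remark. Your $\log t$ cutoff gives $t^{-1}$, which suffices for the proposition. Replacing $\log t$ by $\frac{n}{n+1}\log t$ would recover the paper's rate.
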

\begin{proof}
Fix $T>0$. By Proposition~\ref{prop:unif_Linfty}, there is a uniform
constant $U_T>0$ such that
\[
\|u^\eps\|_{L^\infty(Y\times[0,T])}\leq U_T.
\]
We set  $m_\eps:=\int_Yg_\eps\,\d V$.
Since $g_\eps\to g$ in $L^{p_0}(Y,\d V)$, with $p_0>1$ and
$\int_Yg\,\d V>0$, the numbers $m_\eps$ are uniformly bounded above
and away from zero, and $\|g_\eps\|_{L^{p_0}}$ is uniformly bounded, by construction.
By Yau's theorem~\cite{Yau78a}, there exits 
$\varphi_\eps\in\cH_{\omega_\eps}$ satisfying
\begin{equation}\label{eq:elliptic_reference_metric}
(\omega_\eps+\ddc\varphi_\eps)^n=\frac{V_{\omega_\eps}}{m_\eps}g_\eps\,\d V,
\qquad
\sup_Y\varphi_\eps=0.
\end{equation}
Since $\omega_\varepsilon$ converges smoothly to the semipositive
and big form $\pi^*\omega$, and the densities
$(V_\varepsilon/m_\varepsilon)g_\varepsilon$ are uniformly bounded
in $L^{p_0}(Y,dV)$, the uniform version of Ko{\l}odziej's
$L^\infty$ estimate \cite[Theorem~4.1 and Remark~4.5]{BEGZ10} (c.f. \cite{Kol98})
gives
\begin{equation}\label{eq:elliptic_reference_bound}
-C_\varphi\leq\varphi_\eps\leq0
\end{equation}
for a constant $C_\varphi$ independent of $\eps$.

For the calculations below, we fix $\eps>0$ and we abbreviate
\[
u:=u^\eps,\quad w:=-\dot u>0,\quad
L:=L_\eps(u), \quad \omega':=\omega_\eps+\ddc u,\quad  \varphi:=\varphi_\eps,\quad
\widehat\omega:=\omega_\eps+\ddc\varphi_\eps.
\]
Dividing \eqref{eq:elliptic_reference_metric} by the flow equation
\eqref{eq:eps_IMAF T} cancels $g_\eps$ and yields
\[
\frac{\widehat\omega^n}{\omega'^n}
=\frac{w e^{u-L}}{m_\eps}.
\]
Since
\[
m_\eps e^{-U_T}\leq e^{-L}
=\int_Ye^{-u}g_\eps\,\d V\leq m_\eps e^{U_T},
\]
we have $e^{u-L}/m_\eps\geq e^{-2U_T}$. Hence, the arithmetic--geometric mean
inequality gives
\begin{equation}\label{eq: elliptic_ref_trace}
\tr_{\omega'}\widehat\omega\geq n\left(\frac{\widehat\omega^n}{\omega'^n}\right)^{1/n}
\geq c_Tw^{1/n},
\qquad c_T:=n e^{-2U_T/n}>0.
\end{equation}
Differentiating $L$ and the logarithm of \eqref{eq:eps_IMAF T} gives
\begin{equation}\label{eq: speed_evo}
\dot L=-\int_Yw e^{-u+L}g_\eps\,\d V\leq0,
\qquad
\cP w=w(w+\dot L),
\qquad
\cP:=\partial_t-w\Delta_{\omega'}.
\end{equation}
Consider the bounded positive potential difference
\[
B:=1+U_T+u-\varphi,
\qquad
1\leq B\leq B_T:=1+2U_T+C_\varphi,
\]
and the quotient $Q:=w/B$. Since
$\partial_tB=-w$ and $\Delta_{\omega'}B=n-\tr_{\omega'}\widehat\omega$, we obtain
\[
\cP B=w\big(\tr_{\omega'}\widehat\omega-(n+1)\big).
\]
Define $M(t):=\max_Y Q(\cdot,t)\coloneqq Q(z_t,t)$ for some point $z_t\in Y$ and $t>0$.
At $z_t$, its gradient vanishes. Applying the
product rule to $w=BQ$ at $z_t$, and then using
\eqref{eq: speed_evo}, we find
\begin{equation}\label{eq: speed_quot_evo}
\cP Q
=\frac{\cP w-Q\cP B}{B}
=Q\dot L+Q^2(B+n+1-\tr_{\omega'}\widehat\omega)
\leq A_TQ^2-c_TQ^{2+1/n},
\end{equation}
where $A_T:=B_T+n+1$ and we have used $\tr_{\omega'}\widehat\omega\geq c_T(BQ)^{1/n}\geq c_TQ^{1/n}$, due to
 \eqref{eq: elliptic_ref_trace} and $B\geq1$.
Since $M(t)$ is locally Lipschitz, Hamilton's trick (as in the proof of Proposition \ref{prop:alpha_IMAF_C0_est}) and \eqref{eq: speed_quot_evo} shows that for almost every $t$,
\begin{equation}\label{eq:speed_maximum_ode}
M'(t)\leq A_TM(t)^2-c_TM(t)^{2+1/n}.
\end{equation}
Choose $K_T:=\max\{1,(2A_T/c_T)^n\}$. When $M\geq K_T$,
\eqref{eq:speed_maximum_ode} implies
$M'\leq-(c_T/2)M^{2+1/n}$.
Thus $\ell:=(M-K_T)^+$ satisfies
\[
\ell'\leq-\frac{c_T}{2}\ell^{2+1/n}
\quad\text{for almost every }t>0.
\]
Integrating this scalar differential inequality gives
\[
\ell^{-1-1/n}(t) \geq \ell^{-1-1/n}(0)+\frac{c_T(n+1)}{2n}t
\geq \frac{c_T(n+1)}{2n}t, \qquad 0<t\leq T
\]
and so
\[
M(t) \leq K_{T}+\ell(t)
\leq K_T+
\left(\frac{c_T(n+1)}{2n}t\right)^{-n/(n+1)},
\qquad 0<t\leq T.
\]
Since $w=BQ\leq B_TM$, we conclude that
\begin{equation}\label{eq:explicit_speed_bound}
0<-\dot u^\eps(x,t)\leq C_Tt^{-n/(n+1)}
\quad\text{for }(x,t)\in Y\times(0,T].
\end{equation}
The estimate of the proposition follows with $K_{\tau,T}:=C_T\tau^{-n/(n+1)}$.
\end{proof}

\begin{remark}\label{rem:improved_time_holder}
Estimate \eqref{eq:explicit_speed_bound} from the above proof also allows to improve the H\"older estimate \eqref{eq:time Holder}. Indeed, after integration, for $0\leq s<t\leq T$,
we obtain
\begin{equation}\label{eq:improved_time_holder}
|u^\eps(x,t)-u^\eps(x,s)|
\leq (n+1)C_T\left(t^{1/(n+1)}-s^{1/(n+1)}\right)
\leq (n+1)C_T|t-s|^{1/(n+1)}.
\end{equation}
The constant is independent of $\eps>0$. In particular, this gives an
alternative proof of the time H\"older estimate \eqref{eq:time Holder}, without using the stability estimate \eqref{eq:quasi-decreasing}.
\end{remark}

Next we move on to second order estimates.

\begin{proposition}[local $C^{2}$ estimate]
\label{prop:logtr_omega_Y}
Fixing $\varepsilon >0$ and denoting $\omega'\coloneqq \omega_\eps+\ddc{u}^\eps,$ there is a uniform constants $C>0$  (independent of $\varepsilon$) such that
\[
\log\tr_{\omega_Y}\omega'
	\leq -\psi_\eps^- -C\psi+C.
\]
\end{proposition}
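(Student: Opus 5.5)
We prove the bound by a parabolic maximum principle on $Y\times[0,T]$ for the quantity
\[
Q\coloneqq \log\tr_{\omega_Y}\omega' + \psi_\eps^- + A\psi - B u^\eps ,
\]
with $A,B>0$ uniform constants chosen below. The choice of $Q$ is guided by two facts. The factor $e^{-\psi^-_\eps}$ in the volume form $\omega'^n = V_{\omega_\eps}(-\dot u^\eps)^{-1}e^{-u^\eps+L_\eps+\psi_\eps^+-\psi_\eps^-}\d V$ is the source of the term $-\psi_\eps^-$. The barrier $A\psi$ equals $-\infty$ on $E_0$, so the maximum of $Q$ is attained on $Y\setminus E_0$. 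There \eqref{eq:piomega-omega_Y} gives $\omega_\eps+\ddc\psi\geq \omega_Y$, hence $\ddc\psi\ge \omega_Y-\omega_\eps$. The operator is $\cL\coloneqq -\p_t+(-\dot u^\eps)\Delta_{\omega'}$, as in Lemma~\ref{lem:Phi_leq0}. Since $u^\eps$ is uniformly bounded (Proposition~\ref{prop:unif_Linfty}) and $\sup\psi\le C$, the resulting bound on $Q$ gives the claimed inequality.

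\textbf{Key computation.} Yau's Schwarz-lemma inequality \eqref{alpha_IMAF_C2_Schwarz}, applied with reference metric $\omega_Y$ (bisectional curvature $\ge -B_Y$), gives
\[
\Delta_{\omega'}\log\tr_{\omega_Y}\omega'\ge -B_Y\tr_{\omega'}\omega_Y-\frac{\tr_{\omega_Y}\Ric(\omega')}{\tr_{\omega_Y}\omega'}.
\]
Taking $\ddc\log$ of the flow equation gives
\[
-\Ric(\omega')=\ddc\log(-\dot u^\eps)-\ddc u^\eps+\ddc\psi^+_\eps-\ddc\psi^-_\eps-\Ric(\d V).
\]
The pieces are controlled as follows.
\begin{itemize}
\item Since $\psi_\eps^+\in\PSH(Y,B_0\omega_Y)$, we have $\ddc\psi^+_\eps\ge -B_0\omega_Y$, which is the favourable direction.
\item $-\ddc u^\eps=\omega_\eps-\omega'$ is bounded by $C\omega_Y$ minus $\omega'$.
\item The term $-\ddc\psi_\eps^-$ has no good sign. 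This is why we add $\psi_\eps^-$ to $Q$: $(-\dot u^\eps)\Delta_{\omega'}\psi^-_\eps\ge -B_0(-\dot u^\eps)\tr_{\omega'}\omega_Y$. One has to check that the $\psi^-_\eps$ contribution from the Ricci term, $-\tr_{\omega_Y}\ddc\psi^-_\eps/\tr_{\omega_Y}\omega'$, combines with this. The cleaner route is to regard $\omega'^n$ as $e^{-\psi^-_\eps}$ times a better-behaved density and use the Laplacian comparison formula in the form $\Delta_{\omega'}\log\tr_{\omega_Y}\omega'\ge \frac{\Delta_{\omega_Y}\log(\omega'^n/\omega_Y^n)}{\tr_{\omega_Y}\omega'}-B_Y\tr_{\omega'}\omega_Y$, and then bound $\Delta_{\omega_Y}\psi^-_\eps\ge -nB_0$ from below.
\item The term $\ddc\log(-\dot u^\eps)$ produces, as in the proof of Proposition~\ref{alpha_IMAF_C2_est}, the time derivative $(-\dot u^\eps)^{-1}\p_t\tr_{\omega_Y}\omega'$ together with a good-sign gradient term. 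After multiplying by $(-\dot u^\eps)$ this matches the $-\p_t$ in $\cL$.
\end{itemize}
Altogether we expect
\[
\cL(\log\tr_{\omega_Y}\omega'+\psi^-_\eps)\ge -C(-\dot u^\eps)\tr_{\omega'}\omega_Y - C(-\dot u^\eps)-C|\p_t\text{-terms}|.
\]
For the barrier terms:
\begin{itemize}
\item $\cL(A\psi)=A(-\dot u^\eps)\tr_{\omega'}\ddc\psi\ge A(-\dot u^\eps)(\tr_{\omega'}\omega_Y-\tr_{\omega'}\omega_\eps)$.
\item $\cL(-Bu^\eps)=B\dot u^\eps-B(-\dot u^\eps)(n-\tr_{\omega'}\omega_\eps)$, which contributes $+B(-\dot u^\eps)\tr_{\omega'}\omega_\eps$.
\end{itemize}
Choose $B\ge A$ so that the $\tr_{\omega'}\omega_\eps$ terms cancel favourably, and $A$ large to absorb the curvature constants. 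At a maximum point we then get $(-\dot u^\eps)\tr_{\omega'}\omega_Y\le C(-\dot u^\eps)+C$. The factor $(-\dot u^\eps)$ appears throughout and can be divided out, provided the stray time-derivative terms also carry this factor.

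\textbf{Closing the estimate and the main obstacle.} At the maximum point we combine the bound on $\tr_{\omega'}\omega_Y$ with the determinant bound
\[
\tr_{\omega_Y}\omega'\le C(\tr_{\omega'}\omega_Y)^{n-1}\frac{\omega'^n}{\omega_Y^n},
\qquad
\frac{\omega'^n}{\omega_Y^n}\le C(-\dot u^\eps)^{-1}e^{-\psi_\eps^-}.
\]
The factor $e^{-\psi^-_\eps}$ is exactly cancelled in $Q$ by the term $\psi_\eps^-$, and $\omega_Y^n\le C\d V$ is used in the second bound.

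The main obstacle is the factor $(-\dot u^\eps)^{-1}$, since there is no uniform lower bound on the speed. I would first handle it by adding $-\log(-\dot u^\eps)$ to $Q$, i.e. taking $\log\tr$ of the full density. By \eqref{eq: speed_evo}, $\cP w=w(w+\dot L)$ gives a controllable evolution for $\log w$, namely $\cL\log w=-(w+\dot L)-w|\nabla\log w|^2_{\omega'}$, where $-\dot L\ge 0$ and $w$ is bounded on $[\tau,T]$ by Proposition~\ref{prop:local_t-deriv_upper}. If this term spoils the uniformity near $t=0$, I would instead restrict to $[\tau,T]$, using Proposition~\ref{prop:local_t-deriv_upper} and the initial smoothness of $u_0$, and treat the maximum at the initial time separately.

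The delicate bookkeeping is the interplay between the $\ddc\log(-\dot u^\eps)$ Ricci term and the gradient terms. I expect to absorb it with the standard Cauchy--Schwarz estimate $|\nabla\tr|^2\le \tr\cdot(\text{third-order term})$, as in the Aubin--Yau computation.
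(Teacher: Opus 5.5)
Your architecture matches the paper's: Yau's Schwarz lemma with reference metric $\omega_Y$, the barrier $\psi$ pushing the maximum off $E_0$, the extra $\psi^-_\eps$ in $Q$, and the closing via $\tr_{\omega_Y}\omega'\le C(\tr_{\omega'}\omega_Y)^{n-1}\omega'^n/\omega_Y^n$. Your barrier $A\psi-Bu^\eps$ with $B\ge A$ also works; the paper takes $B=A$.

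\textbf{Main gap.} The gap sits exactly at what you call the main obstacle, and neither remedy you propose closes it. The bound $\omega'^n/\omega_Y^n\le C(-\dot u^\eps)^{-1}e^{-\psi^-_\eps}$ is useless without a lower bound on $w=-\dot u^\eps$ at the maximum point, and none is available uniformly in $\eps$.
\begin{enumerate}
\item Proposition~\ref{prop:local_t-deriv_upper} bounds $w$ from \emph{above}, so restricting to $[\tau,T]$ gains nothing. The statement has no $\tau$ in it anyway.
\item A maximum-principle lower bound for $w$, as in Proposition~\ref{prop:alpha_IMAF_C0_est}, depends on $\inf_Y w(0)$. This quantity involves $g_\eps$ and is not controlled uniformly in $\eps$.
\item Adding $-\log w$ to $Q$ has the wrong sign. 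Adding $+\log w$ only bounds $\log(w\,\tr_{\omega_Y}\omega')+\psi^-_\eps$, which again needs a lower bound on $w$ to give the claim. Moreover $\cL\log w=-(w+\dot L)-w|\nabla\log w|^2_{\omega'}$ carries a bad gradient term.
\end{enumerate}
The missing idea is to put $-t$ into the test function. The paper uses $Q=\log\tr_{\omega_Y}\omega'+\psi^-_\eps-A(u^\eps-\psi)-t$. In your normalization $\cL=-\partial_t+w\Delta_{\omega'}$, the term $-t$ contributes $+1$, whereas every other contribution carries a factor $w$. So at a maximum with $t_0>0$ one gets $0\ge w\bigl(\tr_{\omega'}\omega_Y-C\bigr)+1$. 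This yields \emph{simultaneously} $\tr_{\omega'}\omega_Y\le C$ and $w^{-1}\le C$ at that point, which is exactly what the determinant bound requires.

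\textbf{Second gap.} You leave the $\psi^-_\eps$ contribution from $\Ric(\omega')$ unchecked, and your ``cleaner route'' has the wrong sign. In $\Delta_{\omega_Y}\log(\omega'^n/\omega_Y^n)$ this term appears as $-\Delta_{\omega_Y}\psi^-_\eps$. You therefore need an \emph{upper} bound on $\Delta_{\omega_Y}\psi^-_\eps$, and $\Delta_{\omega_Y}\psi^-_\eps\ge-nB_0$ does not help. The paper uses the elementary inequality $\tr_{\omega_Y}\beta\le\tr_{\omega'}\beta\cdot\tr_{\omega_Y}\omega'$ for the semipositive form $\beta=B_0\omega_Y+\ddc\psi^-_\eps$. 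This gives $\Delta_{\omega_Y}\psi^-_\eps/\tr_{\omega_Y}\omega'\le B_0\tr_{\omega'}\omega_Y+\Delta_{\omega'}\psi^-_\eps$, and the last term is then cancelled by the $\psi^-_\eps$ in $Q$.

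Finally, the ``delicate bookkeeping'' you anticipate does not occur. The gradient term from $\ddc\log(-\dot u^\eps)$ enters $\tr_{\omega_Y}\Ric(\omega')$ with a negative sign and is simply dropped. The identity $\Delta_{\omega_Y}\dot u^\eps=\partial_t\tr_{\omega_Y}\omega'$ turns the remainder into exactly $(-\dot u^\eps)^{-1}\partial_t\log\tr_{\omega_Y}\omega'$. So there are no stray time-derivative terms, and no third-order Cauchy--Schwarz argument is needed.
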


\begin{proof}
We consider the test function $Q\coloneqq\log\tr_{\omega_Y}\omega'+\psi_\eps^- -A(u^\eps-\psi)-t$, where $A>0$ determined later. Suppose $Q$ attains its maximum on $Y \times [0, T]$ at a point $(x_0, t_0)$ with $t_0 > 0$. Note that $Q\to-\infty$ along $E_{0}$. Then $x_{0}\in Y\backslash E_{0}$. 
By Proposition \ref{prop:unif_Linfty}, it suffices to show $Q(x_{0},t_{0})\leq C$. Define the parabolic operator:
\begin{align*}
	\cL_\eps\coloneqq\Delta_{\omega'}-(-\dot{u}^\eps)^{-1}\partial_t.
\end{align*}
By the maximum principle, at $(x_{0},t_{0})$, one has
\begin{equation}\label{prop:logtr_omega_Y eqn 1}
0 \geq \cL_{\eps}(Q) = \cL_{\eps}(\log\tr_{\omega_Y}\omega'+\psi_{\eps}^{-})
-A\cL_{\eps}(u^{\eps}-\psi)
+(-\dot{u}^{\eps})^{-1}.
\end{equation}
By Yau's Schwarz lemma \cite{Yau78b}  (see also e.g. \cite[Chap 3, Lemma~3.7]{Sze14}),
\begin{align}
\label{eq:Yau-inequ}
	\Delta_{\omega'} \log\tr_{\omega_Y}\omega'
	\geq -B\tr_{\omega'}\omega_Y-\frac{\tr_{\omega_Y}\Ric(\omega')}{\tr_{\omega_Y}(\omega')},
\end{align}
where $-B$ is a lower bound for the bisectional curvature of $\omega_Y$. Taking the logarithm of both sides of \eqref{eq:eps_IMAF T}, one has
\begin{align*}
	\log V_\eps^{-1}+\log(-\dot{u}^\eps) +\log\omega'^n
	=-u^\eps+L_\eps(u^\eps)+\psi_\eps^+-\psi_\eps^-+\log \d V,
\end{align*}
and then
\begin{align}
	\tr_{\omega_Y}\Ric(\omega')
	= {} & \tr_{\omega_Y}(-\ddc\log\omega'^n)  \nonumber \\[3mm]
	= {} & \Delta_{\omega_Y}\left(\log(-\dot{u}^\eps)+u^\eps
    -\psi_{\eps}^{+}+\psi_{\eps}^{-}\right)+\tr_{\omega_Y}\Ric(\d V) \nonumber \\[1mm]
	= {} &-\frac{\Delta_{\omega_Y}\dot{u}^\eps}{(-\dot{u}^\eps)}-\frac{|\nabla\dot{u}^\eps|_{\omega_Y}^2}{(-\dot{u}^\eps)^2}+\tr_{\omega_Y}\omega'
    -\tr_{\omega_Y}\omega_\eps 
	  -\Delta_{\omega_Y}\psi_{\eps}^{+}+\Delta_{\omega_Y}\psi_{\eps}^{-}
      +\tr_{\omega_Y}\Ric(\d V) \nonumber \\
	\leq {} &-\frac{\Delta_{\omega_Y}\dot{u}}{(-\dot{u}^\eps)}+\tr_{\omega_Y}\omega'+\Delta_{\omega_Y}\psi_\eps^- +C_1 \nonumber \\
	= {} &-\frac{\p_t\tr_{\omega_Y}\omega'}{(-\dot{u}^\eps)}+\tr_{\omega_Y}\omega'+\Delta_{\omega_Y}\psi_\eps^-+C_1,
	\label{eq:tr_omega_Y(Ric')}
\end{align}
where $C_1>0$ is a uniform constant such that $-\Delta_{\omega_Y}\psi_\eps^++\tr_{\omega_Y}\Ric(\d V)\leq C_1$ and we have used $\Delta_{\omega_Y}\dot{u}^\eps=\p_t\tr_{\omega_Y}\omega'$ for the last equality. Since $B_0\omega_Y+\ddc\psi_\eps^-\geq0$, then 
\begin{align*}
	0\leq B_0\omega_Y+\ddc\psi_\eps^-
	\leq \tr_{\omega'}(B_0\omega_Y+\ddc\psi_\eps^-)\omega'.
\end{align*}
Applying $\tr_{\omega_Y}$ on both sides,
\begin{align*}
	0\leq \tr_{\omega_Y}(B_0\omega_Y+\ddc\psi_\eps^-)
	\leq \tr_{\omega'}(B_0\omega_Y+\ddc\psi_\eps^-)\tr_{\omega_Y}\omega'.
\end{align*}
and so
\begin{align}
\label{eq:Lap_psi-}
	\Delta_{\omega_Y}\psi_\eps^-
	\leq \tr_{\omega'}(B_0\omega_Y+\ddc\psi_\eps^-)\tr_{\omega_Y}\omega'.
\end{align}
Combining \eqref{eq:Yau-inequ}, \eqref{eq:tr_omega_Y(Ric')} and \eqref{eq:Lap_psi-},
then we obtain
\begin{align*}
	\Delta_{\omega'}\Big( \log\tr_{\omega_Y}\omega'+\psi_\eps^-\Big)
	\geq -(B+B_0)\tr_{\omega'}\omega_Y +(-\dot{u}^\eps)^{-1}\p_t\log\tr_{\omega_Y}\omega'
	-1-\frac{C_1}{\tr_{\omega_Y}\omega'}.
\end{align*}
We may assume that $\tr_{\omega_Y}\omega'\geq 1$. Otherwise, we are done. It follows that
\begin{equation}\label{prop:logtr_omega_Y eqn 2}
	\cL_\eps\Big( \log\tr_{\omega_Y}\omega'+\psi_\eps^-\Big)
	\geq -(B+B_0)\tr_{\omega'}\omega_Y 
	-(1+C_1).
\end{equation}
On the other hand, by $\omega_{\eps}+\ddc\psi\geq(1+\eps)\omega_{Y}$,
\begin{equation}\label{prop:logtr_omega_Y eqn 3}
\cL_\eps(u^\eps-\psi) = \Delta_{\omega'}(u^\eps-\psi)+1 
= n+1-\tr_{\omega'}(\omega_{\eps}+\ddc\psi) 
\leq n+1-(1+\eps)\tr_{\omega'}\omega_{Y}.
\end{equation}
Substituting \eqref{prop:logtr_omega_Y eqn 2} and \eqref{prop:logtr_omega_Y eqn 3} into \eqref{prop:logtr_omega_Y eqn 1},
\begin{align*}
0 \geq -(B+B_0)\tr_{\omega'}\omega_Y -(1+C_1)
	 -A(n+1)+A(1+\eps)\tr_{\omega'}\omega_Y+(-\dot u^\eps)^{-1}.
\end{align*}
By choosing $A\coloneqq B+B_0+1$, we have
\begin{align*}
	\tr_{\omega'}\omega_Y \leq C 
    \quad\text{and}\quad 
	(-\dot u^\eps)^{-1} \leq C.
\end{align*}
By the equation \eqref{eq:eps_IMAF T}, at $(x_0,t_0)$,
\[
\frac{\omega'^n}{\omega_{Y}^{n}}
= V_\eps(-\dot{u}^\eps)^{-1}e^{-u^\eps+L_\eps(u^\eps)+\psi_{\eps}^{+}-\psi_{\eps}^{-}}\frac{\d V}{\omega_{Y}^{n}}
\leq Ce^{-\psi_\eps^{-}},
\]
It follows that
\begin{align}
\label{eq:tr_relation}
	\tr_{\omega_Y}\omega'
	\leq \frac{1}{(n-1)!}(\tr_{\omega'}\omega_Y)^{n-1}\frac{\omega'^n}{\omega_Y^n}
    \leq Ce^{-\psi_\eps^{-}}.
\end{align}
Combining this with Proposition \ref{prop:unif_Linfty}, $\sup_{Y}\psi_{\eps}^{-}\leq C$ and $\sup_{Y}\psi\leq C$, we obtain
\[
Q(x_{0},t_{0}) = \log\tr_{\omega_Y}\omega'+\psi_\eps^- -A(u^{\eps}-\psi)-t \leq C,
\]
as required.
\end{proof}

\bigskip
\begin{theorem}
	\label{thm:local-C2}
	For any $F\Subset Y\backslash E$ and $\tau\in(0,T)$, there is a uniform constant $K_{F,\tau,T}>0$  (independent of $\varepsilon$)  such that
	\begin{align}
		\label{eq:unif-ellip}
        K_{F,\tau,T}^{-1}\omega_Y\leq \omega_\eps+\ddc u^\eps
\leq K_{F,\tau,T}\omega_Y
\quad\text{on }F\times[\tau,T].
	\end{align}
	Furthermore, for any positive integer $k$ and some $0<\alpha<1$, 
	\begin{align}\label{eq: C^k_est}
		\|u^{\eps}\|_{C_x^{k}C_{t}^{1,\alpha/2}(F\times[\tau,T])} \leq K_{k,F,\tau,T,\alpha}.
	\end{align}
\end{theorem}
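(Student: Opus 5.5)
The plan is to combine Proposition~\ref{prop:logtr_omega_Y} with a lower bound on $-\dot u^\eps$ on compact subsets of $Y\setminus E$, so as to get two-sided metric bounds there, and then run the local Evans--Krylov--Schauder machinery.

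\textbf{Upper bound.} Proposition~\ref{prop:logtr_omega_Y} gives $\tr_{\omega_Y}\omega'\le C e^{-\psi_\eps^- - C\psi}$. On $F\Subset Y\setminus E$ we have $E_0\subset E$, so $\psi$ is bounded below on $F$. Moreover $\psi_\eps^-\searrow\psi^-$ with $\psi^-$ smooth off $E$, hence $\psi_\eps^-\ge\psi^-\ge -C_F$ on $F$. Together these give $\omega'\le K_F\omega_Y$ on $F\times[0,T]$, uniformly in $\eps$.

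\textbf{Lower bound.} Here the volume form has to be controlled from below. By the flow equation,
\[
\omega'^n = V_\eps(-\dot u^\eps)^{-1}e^{-u^\eps+L_\eps+\psi^+_\eps-\psi^-_\eps}\,\d V.
\]
Proposition~\ref{prop:local_t-deriv_upper} gives $(-\dot u^\eps)^{-1}\ge K_{\tau,T}^{-1}$ on $[\tau,T]$. Proposition~\ref{prop:unif_Linfty} bounds $u^\eps$ and $L_\eps$. On $F$, $\psi_\eps^+\ge\psi^+\ge -C_F$ and $\psi_\eps^-\le \sup\psi^-_1$. Hence $\omega'^n\ge c\,\omega_Y^n$ on $F\times[\tau,T]$. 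Combined with the upper bound on $\tr_{\omega_Y}\omega'$, the inequality $\tr_{\omega'}\omega_Y\le (\tr_{\omega_Y}\omega')^{n-1}\omega_Y^n/\omega'^n$ gives \eqref{eq:unif-ellip}. This also yields $-\dot u^\eps\ge c$ on $F\times[\tau,T]$ (since $\omega'^n\le C\omega_Y^n$ there), so $\rho=\log(-\dot u^\eps)$ is bounded.

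\textbf{Higher order.} Write the equation as $\dot u^\eps = -V_\eps e^{-u^\eps+L_\eps}g_\eps\,\d V/\omega'^n$, equivalently
\[
\partial_t u=-\exp\big(-\log\det(\omega_\eps+\ddc u)+h_\eps(x,t,u)\big),
\]
with $h_\eps$ smooth on $F$ with bounds uniform in $\eps$. The $L_\eps(t)$ term is only Lipschitz in $t$, since $|\dot L_\eps|\le \sup|\dot u^\eps|$ is bounded by Proposition~\ref{prop:local_t-deriv_upper}. The operator is concave in $\ddc u$ and uniformly parabolic on $F\times[\tau,T]$ by the previous steps. Local parabolic Evans--Krylov (\cite{Kry82,Chu16}), on a slightly larger $F'\Subset Y\setminus E$ and with cutoffs in space and time, then gives $C^{2+\alpha,1+\alpha/2}$ bounds. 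Bootstrapping is done by differentiating the equation in spatial directions and applying parabolic Schauder (\cite{Lie96}). The result is arbitrary $C^k_x$ bounds together with $C^{1,\alpha/2}_t$ bounds, the latter limited by the Lipschitz regularity of $L_\eps$ in time; this matches the form of \eqref{eq: C^k_est}.

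\textbf{Main obstacle.} The main obstacle is this last step. The time regularity of the nonlocal term $L_\eps(u^\eps)$ caps the time regularity at $C^{1,\alpha/2}$, so I would treat $e^{L_\eps(t)}$ as a bounded, H\"older-in-time coefficient. Time-Hölder control of $L_\eps$ comes from Remark~\ref{rem:improved_time_holder}, and its Lipschitz bound on $[\tau,T]$ from Proposition~\ref{prop:local_t-deriv_upper}. The Evans--Krylov theory for concave equations with merely Hölder-in-$t$ coefficients must then be invoked in local form, and I would check that it applies in that setting.
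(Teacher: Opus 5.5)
Your proposal is correct and follows essentially the same route as the paper. The upper bound comes from Proposition~\ref{prop:logtr_omega_Y}, with $\psi$ and $\psi^-_\eps$ bounded below on $F$. The lower bound comes from the trace inequality combined with the flow equation, Proposition~\ref{prop:unif_Linfty} and the speed bound of Proposition~\ref{prop:local_t-deriv_upper}. Higher regularity then follows from local parabolic Evans--Krylov and spatial Schauder bootstrapping. Your extra remarks are accurate and simply make explicit what the paper leaves implicit or discusses in the remark after the theorem: the resulting lower bound on $-\dot u^\eps$ on $F$, needed for uniform parabolicity, and the cap on time regularity from the nonlocal term $L_\eps$, which is uniformly Lipschitz in $t$ on $[\tau,T]$ and so more than enough for Evans--Krylov.
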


{

\begin{proof} We use again the abbreviation $\omega'\coloneqq \omega_\eps+\ddc{u}^\eps,$. The estimate \eqref{eq:unif-ellip} follows from Proposition \ref{prop:unif_Linfty}, Proposition~\ref{prop:local_t-deriv_upper}, Proposition~\ref{prop:logtr_omega_Y} and
\begin{align*}
	\tr_{\omega'}\omega_Y
	\leq \frac{1}{(n-1)!}(\tr_{\omega_Y} \omega')^{n-1}\frac{\omega_Y^n}{\omega'^n}
	=\frac{1}{(n-1)!}(\tr_{\omega_Y} \omega')^{n-1}
    V_{\omega_{\eps}}^{-1}(-\dot{u})
    e^{u^{\eps}-L_{\eps}(u^{\eps})-\psi_{\eps}^{+}+\psi_{\eps}^{-}}
    \frac{\omega_{Y}^{n}}{\d V}.
	\end{align*}
Then by using parabolic complex Evans--Krylov estimate \cite{Kry82,Chu16}, we obtain
\[
\|u^{\eps}\|_{C_{x}^{2,\alpha}C_{t}^{1,\alpha/2}(F\times[\tau,T])} \leq K_{F,\tau,T,\alpha}.
\]
Taking the spatial derivative of the on the both sides of \eqref{eq:eps_IMAF T}, and using parabolic complex Evans--Krylov estimate again and parabolic Schauder theory \cite{Lie96} (in the spatial direction), we conclude \eqref{eq: C^k_est}.
\end{proof}

\begin{remark}
\begin{enumerate} 
    \item There is an obstruction to applying the parabolic Evans--Krylov theory to obtain higher-order estimates in the time variable for $u^{\varepsilon}$. The reason is that the equation \eqref{eq:eps_IMAF T} contains the `purely global' term $L_\varepsilon(u^\varepsilon_t)$. The parabolic Evans--Krylov theory only gives the local estimate, which can not bound the higher $t$-derivatives of $L_\eps(u_t^\eps)$.
    
    \item If we take the following reparametrization with respect to time
\begin{align*}
    \tau_\eps(t)\coloneqq\int_0^te^{L_\eps(u^\eps(a))}da, \text{ and } U^\eps(x,\tau)\coloneqq
     U^\eps(x,\tau_\eps(t))
    =u^\eps(x,t),
\end{align*}
then
\begin{align*}
    \p_\tau U^{\eps}(x,\tau)
    =e^{-L_\eps(u^\eps_t)}\p_t u^\eps(x,t)
\end{align*}
and the original equation can be re-written as
\begin{align}
\label{eq:re-param-equation}
    V_\eps^{-1}(-\p_\tau U^\eps)(\omega_\eps+\ddc U^\eps)^n
    =e^{-U^\eps(x,\tau)}g_\eps \d V.
\end{align}
Now, the equation \eqref{eq:re-param-equation} is purely local without the `global term' involving the $L$ functional. Then applying the parabolic Evans--Krylov theory, we can obtain the local higher order estimate
\begin{align*}
    \|U^\eps\|_{C_x^{k}C_{t}^{k/2}(F\times[\tau,T])}\leq K_{k,F,\tau,T}.
\end{align*}
\end{enumerate}    
\end{remark}
}

\medskip
For fixed $\varepsilon>0$ let us recall our earlier notation:
\begin{align*}
	w_\eps(t)=(-\dot u^\eps(t))>0, \quad \omega'=\omega'(t)\coloneqq\omega_\eps+\ddc u^\eps(t),\quad\text{and} \quad
	\mu_{\omega'(t)}\coloneqq e^{-u^\epsilon(t)+L_\epsilon(u^\epsilon(t))}
g_\epsilon\, \d V.
\end{align*}

\begin{proposition}
\label{lem:La-density-eps}
    For any $0<T<\infty$, then
	there are uniform constants $\delta_0>0$ and $K_{a,T}>0$  (independent of $\varepsilon$)  such that for any $0<\delta<\delta_0$ with $a=1+\delta$, 
	\begin{align*}
		I_{\eps,a}(t)\coloneqq \int_Yw_\eps(t)^{-a}\,\mu_{\omega'(t)}
\leq K_{a,T}
\quad \text{for $t\in[0,T]$}.
	\end{align*}
\end{proposition}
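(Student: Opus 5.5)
The plan is to rewrite $I_{\eps,a}$ as an integral against the evolving volume form and then run a Grönwall argument in $t$, using the speed bound \eqref{eq:explicit_speed_bound}. The smallness of $\delta$ is needed only at $t=0$.

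\emph{Reduction.} By \eqref{eq:eps_IMAF T}, $\mu_{\omega'(t)}=V_{\omega_\eps}^{-1}w_\eps\,\omega'^n$. Hence
\[
I_{\eps,a}(t)=V_{\omega_\eps}^{-1}\int_Y w_\eps^{-\delta}\,\omega'^n=:J(t).
\]
Everything is smooth for fixed $\eps>0$, so we may differentiate $J$ in $t$. We use $\partial_t\omega'^n=\Delta_{\omega'}(\dot u^\eps)\,\omega'^n=-\Delta_{\omega'}w\,\omega'^n$ and the evolution \eqref{eq: speed_evo}, which reads $\partial_t w=w\Delta_{\omega'}w+w^2+w\dot L$. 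This gives
\[
V_{\omega_\eps}\dot J=-(1+\delta)\int_Y w^{-\delta}\Delta_{\omega'}w\,\omega'^n-\delta\int_Y w^{1-\delta}\omega'^n-\delta\dot L\int_Y w^{-\delta}\omega'^n .
\]
Integrating by parts, the first term equals $-(1+\delta)\delta\int_Y w^{-\delta-1}|\nabla w|^2_{\omega'}\omega'^n\le0$. The second term is also $\le0$. For the third, \eqref{eq: speed_evo} gives $-\dot L=\int_Y w\,\mu_{\omega'}\le\sup_Y w$. By \eqref{eq:explicit_speed_bound}, $\sup_Y w\le C_T t^{-n/(n+1)}$ with $C_T$ uniform. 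Therefore
\[
\dot J(t)\le \delta C_T t^{-n/(n+1)}J(t),\qquad 0<t\le T.
\]
Since $t^{-n/(n+1)}$ is integrable on $[0,T]$, Grönwall yields
\[
J(t)\le J(0)\exp\bigl((n+1)\delta C_T T^{1/(n+1)}\bigr).
\]

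\emph{Initial time.} It remains to bound $J(0)$ uniformly in $\eps$; this is the main point. At $t=0$ the equation gives
\[
w_\eps(0)^{-1}=\frac{\omega_{u_0,\eps}^n}{V_{\omega_\eps}e^{-u_0+L_\eps(u_0)}g_\eps\,\d V}\le C e^{\psi^-_\eps-\psi^+_\eps}.
\]
Here $u_0$ is smooth, $\omega_\eps+\ddc u_0$ is uniformly bounded, and $L_\eps(u_0)$ is uniformly bounded because $m_\eps$ is bounded above and away from zero. Next, $\sup_Y\psi^-_\eps$ is uniformly bounded, and $\psi^+_\eps\searrow\psi^+$ gives $e^{-\delta\psi^+_\eps}\le e^{-\delta\psi^+}$. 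It follows that
\[
J(0)\le C\int_Y e^{-\delta\psi^+}\,\d V.
\]
The function $\psi^+$ is quasi-psh with divisorial analytic singularities, so by Skoda's theorem this integral is finite for all $\delta<\delta_0$. Here $\delta_0$ is determined by the Lelong numbers (the coefficients of $E^+$); concretely $\delta_0\sim 1/\max_j(\text{coefficient of }E_j\text{ in }\psi^+)$.

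The expected obstacle is precisely this step. One must check that the $t=0$ density $w_\eps(0)^{-\delta}$ is controlled by $e^{-\delta\psi^+}$ uniformly in $\eps$, which relies on the monotonicity of the Demailly approximants. One must also make sure the Grönwall factor uses only the integrable blow-up $t^{-n/(n+1)}$ of Proposition~\ref{prop:local_t-deriv_upper}, and not any $\eps$-dependent bound on $w$ near $t=0$.
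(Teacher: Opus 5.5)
Your proof is correct and follows the paper's argument: you get the same uniform bound at $t=0$ from $\psi^+_\eps\searrow\psi^+$ and Skoda, and the same differential inequality $\frac{\d}{\d t}I_{\eps,a}\le\delta(-\dot L_\eps)I_{\eps,a}$ after discarding the two nonpositive terms. The only difference is how you integrate it: the paper notes that $\int_0^t(-\dot L_\eps)\,\d s=L_\eps(u_0)-L_\eps(u^\eps_t)$ is controlled by the $C^0$ estimate alone, whereas you bound $-\dot L_\eps\le\sup_Y w_\eps$ and use the stronger speed estimate \eqref{eq:explicit_speed_bound}, which is valid but not needed.
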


\begin{proof}
	We first prove that $I_{\eps,a}(0)$ is uniformly bounded.
	One chooses sufficiently small $\delta_0>0$ such that
	\begin{align*}
		\int_Y e^{-\delta_0\psi^+}\d V<\infty.
	\end{align*} 
	Since $\psi^+_\eps\searrow\psi^+$ Then 
	\begin{align*}
		\int_X e^{-\delta_0\psi_\eps^+}\d V
		\leq \int_Y e^{-\delta_0\psi^+}\d V<\infty.
	\end{align*}
	Denote $h_\eps\coloneqq\frac{\omega'(0)^n}{\d V}$. Note that the density $h_\eps$ is uniformly bounded.
	By the perturbed equation \eqref{eq:eps_IMAF}, one has
	\begin{align}
		I_{\eps,a}(0)
= {} & \int_Y
\left(
V_{\omega_\eps}e^{-u_0+L_\eps(u_0)}
\frac{g_\eps}{h_\eps}
\right)^{-a}
e^{-u_0+L_\eps(u_0)}g_\eps\,\d V \nonumber \\
= {} & V_{\omega_\eps}^{-a}
\int_Ye^{(a-1)u_0-(a-1)L_\eps(u_0)}
h_\eps^a g_\eps^{1-a}\,\d V
\leq K\int_Y e^{-\delta\psi_\eps^++\delta\psi_\eps^-}\,\d V
\leq K. \label{La-density-eps eqn 1}
	\end{align}
On the other hand, note that
\begin{align*}
	\partial_t \log w_\eps 
		=&w_\eps+ \Delta_{\omega'(t)}w_\eps + \dot{L}_\eps(u^\eps)\quad \text{and}\quad
		\p_t\mu_{\omega'(t)}=(w_\eps+\dot L_\eps(u^{\eps}))\mu_{\omega'(t)}.
\end{align*}
Using the above equalities and integration by parts, we compute
\begin{align*}
\frac{\d}{\d t}I_{\eps,a}(t)
= {} & \int_Y\left(-a w_\eps^{-a}\cdot\partial_t\log w_\eps
+w_\eps^{-a}(w_\eps+\dot L_\eps)\right)\mu_{\omega'} \\
= {} & -a\int_Yw_\eps^{-a}\Delta_{\omega'}w_\eps\,\mu_{\omega'}
  -(a-1)\int_Yw_\eps^{1-a}\mu_{\omega'}
  +(1-a)\dot L_\eps(u^{\eps}) I_{\eps,a}(t) \\
\leq {} & -\frac{a}{V_\eps}\int_Yw_\eps^{1-a}\Delta_{\omega'}w_\eps\,{\omega'}^n+(1-a)\dot L_\eps(u^{\eps}) I_{\eps,a}(t) \\
= {} & \frac{a(1-a)}{V_\eps}\int_Yw_\eps^{-a}|\nabla w_\eps|_{\omega'}^2\,\omega'^n +(1-a)\dot L_\eps(u^{\eps}) I_{\eps,a}(t) \\
\leq {} & (1-a)\dot L_\eps(u^{\eps}) I_{\eps,a}(t).
\end{align*}
Combining this with Proposition~\ref{prop:unif_Linfty},
\begin{align*}
	\log\frac{I_{\eps,a}(t)}{I_{\eps,a}(0)}
\leq(a-1)\Big(L_\eps(u_0)-L_\eps(u^\eps(t)) \Big)\leq K.
\end{align*}
Together with \eqref{La-density-eps eqn 1}, we finish the proof.
\end{proof}

\subsection{Long time existence}
\label{subsec:long time existence}
Applying the uniform estimates of the perturbed flow we prove Theorem \ref{thm:longtime_exist_pullback}:

\begin{theorem}
	\label{thm:longtime-exist}
There exists a solution $u\in C_x^{\infty}C_t^1((Y\backslash E)\times(0,\infty))\cap C(Y\times[0,\infty))$ of \eqref{eq:IMAF-normalized} satisfying the following properties: 
\begin{enumerate}
\item[(i)] For any $0<T<\infty$, there are constants $\beta\in(0,1)$ (independent of $T$) and $K_{T}>0$ such that
    \begin{align*}
	|u(x,t)-u(x,t')| \leq K_T|t-t'|^{\beta}
    \quad \text{for $x\in Y$ and $t,t'\in[0,T]$}.
    \end{align*}
\item[(ii)] For any $0<\tau<T<\infty$, there is a constant $K_{\tau,T}>0$ such that
$$
|u(x,t)-u(x,t')| \leq K_{\tau,T}|t-t'|
   \quad \text{for $x\in Y$ and $t,t'\in[\tau,T]$}.
$$
\end{enumerate}
\end{theorem}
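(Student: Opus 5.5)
We will obtain $u$ as a limit of the perturbed solutions $u^{\eps_k}$ of \eqref{eq:eps_IMAF}, using the uniform estimates of the previous subsection. The first step is to extract a limit that is uniform on $Y\times[0,T]$. By Proposition~\ref{prop:unif_Linfty} the family $\{u^\eps\}$ is uniformly bounded on $Y\times[0,T]$, and by \eqref{eq:time Holder} (or Remark~\ref{rem:improved_time_holder}) it is uniformly H\"older in $t$. Spatial equicontinuity on all of $Y$ is not available directly, so we exploit the monotonicity structure instead. For $\eps<\eps'$ the stability estimate \eqref{eq:quasi-decreasing} gives $\sup(u^{\eps}-u^{\eps'})\leq K_\beta\|u^\eps-u^{\eps'}\|_{L^1}^\beta$. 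Compactness of $\omega_{1}$-psh functions with uniform sup bounds, together with the time H\"older bound, gives a subsequence converging in $L^1(Y\times[0,T])$, and in fact in $L^1(Y)$ for each $t$, uniformly in $t$. Along this subsequence $\|u^{\eps_k}-u^{\eps_l}\|_{L^1}\to0$, so \eqref{eq:quasi-decreasing} applied in both orders shows that $\sup_{Y\times[0,T]}|u^{\eps_k}-u^{\eps_l}|$ is small. We should double-check that the estimate is symmetric enough for this. If it is not, we apply it to $\sup(u^{\eps}-u^{\eps'})$ for $\eps<\eps'$, and we handle the opposite direction via the comparison $u^{\eps'}\leq u^\eps+o(1)$, which should follow from a maximum principle argument since $\omega_{\eps'}\geq\omega_\eps$. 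Hence $u^{\eps_k}$ is Cauchy in $C^0(Y\times[0,T])$, and the limit $u$ is continuous with $u(\cdot,0)=u_0$. A diagonal argument over $T\in\Bbb N$ gives a limit on $Y\times[0,\infty)$.

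Next we identify the limit as a solution. Each $u_t$ is a uniform limit of $\omega_{\eps_k}$-psh functions with $\omega_{\eps_k}\to\pi^*\omega$, so $u_t\in\PSH_{\pi^*\omega}$. On compact sets $F\times[\tau,T]$ with $F\Subset Y\setminus E$, Theorem~\ref{thm:local-C2} gives uniform $C^k_xC^{1,\alpha/2}_t$ bounds, so by Arzel\`a--Ascoli we have $u^{\eps_k}\to u$ in $C^k_xC^1_t$ locally. This yields $u\in C^\infty_xC^1_t((Y\setminus E)\times(0,\infty))$, and the pointwise equation passes to the limit there, provided that $L_{\eps_k}(u^{\eps_k}_t)\to L(u_t)$. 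That convergence follows from the uniform convergence of $u^{\eps_k}$, the $L^\infty$ bounds, and $g_{\eps_k}\to g$ in $L^{p_0}$. Properties (i) and (ii) then pass to the limit pointwise from \eqref{eq:time Holder} and Proposition~\ref{prop:local_t-deriv_upper}, since the constants there are independent of $\eps$.

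The main obstacle is the first step: upgrading $L^1$ convergence to uniform convergence on all of $Y$, including near $E$, where we have no spatial regularity. Everything rests on the stability estimate \eqref{eq:quasi-decreasing}. If only the one-sided version holds, a fallback is to use it to show $u^{\eps_k}$ is almost monotone in $k$. Combined with the uniform bound and the time H\"older continuity, this gives convergence $u^{\eps_k}\to u$ uniformly up to an error $K_\beta\|\cdot\|_{L^1}^\beta\to0$ (a Dini-type argument). Continuity of $u$ then follows because each $u^{\eps_k}$ is continuous and the convergence is uniform.
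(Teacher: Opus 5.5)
Your Step 1 has a genuine gap, and it sits at the hard point of the theorem: continuity of $u$ up to $E$.

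The estimate \eqref{eq:quasi-decreasing} is intrinsically one-sided. It comes from using $u^{\eps}$ as a competitor in the $\eps'$-problem, which needs $\omega_{\eps'}+\ddc u^{\eps}\geq(\eps'-\eps)\omega_Y\geq0$, i.e.\ $\eps<\eps'$. So it cannot be applied in both orders.

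Your maximum-principle fix points the wrong way. The inequality $\omega_{\eps'}\geq\omega_\eps$ lets you compare $u^\eps$ from below with $u^{\eps'}$, which is the direction you already have. But $u^{\eps'}$ is not $\omega_\eps$-psh, so it cannot be pushed under $u^\eps$. Restoring positivity naively with $\delta\psi$ only gives control away from $E_0$, where $\psi=-\infty$. The different densities $g_\eps\neq g_{\eps'}$ and the nonlocal $L$-terms also block a plain comparison.

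The Dini-type fallback is circular. A sequence of continuous functions that decreases up to vanishing constants, such as $u^{\eps_j}+c_{\eps_j}$, converges uniformly if and only if its pointwise limit is continuous. A priori that limit is only upper semicontinuous. So your argument proves exactly what the paper extracts from \eqref{eq:quasi-decreasing}: a bounded limit that is $\pi^*\omega$-psh on each slice, solves the equation on $(Y\setminus E)\times(0,\infty)$, and satisfies (i)--(ii). It does not give $u\in C(Y\times[0,\infty))$.

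The missing ingredient is an independent proof that each $u_t$ is continuous on $Y$, and this needs integrability your cited estimates do not provide. Write the flow as $(\pi^*\omega+\ddc u_t)^n=V_\omega e^{-u_t+L(u_t)}(g/w)\,\d V$ with $w=-\dot u_t$, extended by zero across $E$. This is legitimate because $u_t$ is bounded, so $\omega_{u_t}^n$ does not charge $E$.

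To apply the continuity theory for degenerate Monge--Amp\`ere equations with $L^r$ densities \cite{EGZ09}, you need $g/w\in L^r(\d V)$ for some $r>1$. That requires control of $w$ from \emph{below} near $E$. However, Proposition~\ref{prop:local_t-deriv_upper} only bounds $w$ from above, and Theorem~\ref{thm:local-C2} only gives information on compact subsets of $Y\setminus E$.

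The paper fills this with a negative-moment bound (Proposition~\ref{lem:La-density-eps}): $\int_Y w_\eps^{-a}e^{-u^\eps+L_\eps(u^\eps)}g_\eps\,\d V\leq K_{a,T}$ for $a=1+\delta$ with $\delta$ small. It is proved by showing $\frac{\d}{\d t}I_{\eps,a}\leq(1-a)\dot L_\eps I_{\eps,a}$ via integration by parts, and by bounding $I_{\eps,a}(0)$ through the integrability of $e^{-\delta\psi^+_\eps}$.

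The paper then passes this to $\int_{Y\setminus E}w^{-a}g\,\d V\leq K$ by Fatou on an exhaustion. H\"older with $g\in L^{p_0}$ gives $g/w\in L^r$ for $1<r<ap_0/(a+p_0-1)$. This yields $u_t\in C^0(Y)$, and joint continuity follows from the uniform-in-$x$ time H\"older bound (i). Only after this does your Dini argument become valid, upgrading the convergence to uniform a posteriori.
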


\begin{proof}
We first construct the solution of \eqref{eq:IMAF-normalized}. By Arzel\`a-Ascoli theorem and Theorem~\ref{thm:local-C2}, up to a subsequence, we have
\[
u^{\eps} \to u \ \text{in the $C_{x,\mathrm{loc}}^{\infty}C_{t,\mathrm{loc}}^{1}(Y\backslash E\times(0,\infty))$-topology},
\]
for some $u\in C_x^{\infty}C_t^1(Y\backslash E\times(0,\infty))$. Equivalently, for any positive integer $k$, $F\Subset Y\backslash E$ and $0<\tau<T$,
\begin{align*}
	u^{\eps} \to u \ \text{in the $C_x^{k}C_{t}^{1}(F\times[\tau,T])$-topology}.
\end{align*}
In particular, we have
\[
u^{\eps}(x,t) \to u(x,t) \ \text{for any $(x,t)\in Y\backslash E\times(0,\infty)$}.
\]
Since $E$ has zero Lebesgue measure, then by Proposition \ref{prop:unif_Linfty} and the dominated convergence theorem, we obtain
\[
L_\eps(u^\eps) \to -\log\int_{Y\backslash E} e^{-u} g\, \d V = L(u) \quad \text{for any $t\in(0,\infty)$},
\]
and
\begin{align*}
	\frac{1}{V_\omega}(-\dot{u})(\omega+\ddc u)^n
 	=e^{-u+L(u)}g\, \d V \quad \text{ in }Y\backslash E\times(0,\infty).
\end{align*}
The above $u$ is only defined on $Y\backslash E\times(0,\infty)$.

We extend it to $Y\times[0,\infty)$ by showing
\begin{enumerate}
	\item[($*$)] $u(\cdot,t)$ is the limit of a decreasing sequence $\hat u_{\eps_{j}}\in\PSH_{\omega_{\eps_{j}}}$ for each $t>0$. In particular, $u(\cdot,t)\in\PSH_{\pi^* \omega}$ for each $t>0$.
\end{enumerate}
Fix $T>0$ and use the dominated convergence theorem again,
\[
\|u^{\eps}-u\|_{L^{1}(Y\backslash E\times[0,T])} \to 0.
\]
Let $K$ be the constant in Proposition \ref{prop:sub_stab_holder}. We choose $\eps_j>0$ such that
\begin{align*}
	K \|u_{\eps_j} - u\|_{L^1(Y\backslash E\times[0,T])}^\beta < 2^{-j-1}.
\end{align*}
Applying the subadditivity property $(a+b)^\beta \le a^\beta + b^\beta$ for $a,b>0$, one has
\begin{align}
 K \|u_{\eps_j} - u_{\eps_{j+1}}\|_{L^1(Y\backslash E\times[0,T])}^\beta \nonumber \leq & K \left(\|u_{\eps_j} - u\|_{L^1(Y\backslash E\times[0,T])} 
	+ \|u_{\eps_{j+1}} - u\|_{L^1(Y\backslash E\times[0,T])}\right)^\beta \nonumber \\[1mm]
\leq {} &  K \|u_{\eps_j} - u\|_{L^1(Y\backslash E\times[0,T])}^\beta 
	+ K \|u_{\eps_{j+1}} - u\|_{L^1(Y\backslash E\times[0,T])}^\beta \nonumber  \\[1mm]
< {} & 2^{-j-1} + 2^{-j-2} < 2^{-j}.
	\label{eq:subadditivity}
\end{align}
We define
\begin{align*}
	c_{\eps_j} := K \sum_{k=j}^{\infty} \|u_{\eps_k} - u_{\eps_{k+1}}\|_{L^1(Y\times[0,T])}^\beta.
\end{align*}
By \eqref{eq:subadditivity}, we have 
\begin{align*}
	c_{\eps_j} < 2^{-j+1} \rightarrow0, \text{ as }j\rightarrow\infty.
\end{align*}
We claim that $\hat u_{\eps_j}\coloneqq u_{\eps_j}+c_{\eps_j}$ is a decreasing sequence. Indeed, we take $i < j$, which implies $\eps_i > \eps_j$. By \eqref{eq:quasi-decreasing}, one obtains
\begin{align*}
	u_{\eps_j} \le u_{\eps_i} + K \|u_{\eps_i} - u_{\eps_j}\|_{L^1}^\beta.
\end{align*}
Note that
\begin{align*}
	\|u_{\eps_i} - u_{\eps_j}\|_{L^1} \le \sum_{k=i}^{j-1} \|u_{\eps_k} - u_{\eps_{k+1}}\|_{L^1},
\end{align*}
which implies 
\begin{align*}
	K\|u_{\eps_i} - u_{\eps_j}\|_{L^1}^\beta 
	\le K\left( \sum_{k=i}^{j-1} \|u_{\eps_k} - u_{\eps_{k+1}}\|_{L^1} \right)^\beta 
	\le K\sum_{k=i}^{j-1} \|u_{\eps_k} - u_{\eps_{k+1}}\|_{L^1}^\beta
	=c_{\eps_i}-c_{\eps_j}.
\end{align*}
Thus, we have
\begin{align*}
	u_{\eps_j}\leq u_{\eps_i}+(c_{\eps_i}-c_{\eps_j}), \quad\text{ i.e. }\quad
	\hat u_{\eps_j}\leq \hat u_{\eps_i}.
\end{align*}
Denote
\begin{align}
\label{eq:def-hatu}
	\hat u(x,t) \coloneqq \lim_{j\to\infty}\hat u_{\eps_{j}}(x,t) \ \ \text{for $(x,t)\in Y\times[0,T]$}.
\end{align}
Then $\hat u(\cdot,t)$ is the limit of a decreasing sequence $\hat u_{\eps_{j}}\in\PSH_{\eps_{j}}$ for each $t\in[0,T]$.
On the other hand, for any $(x,t)\in Y\backslash E\times[0,T]$, we have
\begin{align}
\label{eq:limit-u_eps_j}
	u(x,t) = \lim_{j\to\infty}u_{\eps_{j}}(x,t) = \lim_{j\to\infty}(\hat u_{\eps_{j}}(x,t)-c_{\eps_{j}}) = \hat u(x,t).
\end{align}
This shows that for any $t\in[0,T]$, $u(\cdot,t)$ can be extended to a global function on $Y$ satisfying the required decreasing approximation property. Since $T$ is arbitrary, then ($*$) follows.

\medskip

Next we show $u$ satisfies (i) and (ii).
On the other hand, for any $x\in Y$, apply Proposition~\ref{prop:sub_stab_holder} to $u_{\eps_{j}}$,
\begin{align*}
|u_{\eps_j}(x,t)-u_{\eps_j}(x,t')|
\leq K|t - t'|^\beta.
\end{align*}
Taking $j\rightarrow\infty$, by \eqref{eq:limit-u_eps_j}, we obtain (i). By the similar argument and Proposition \ref{prop:local_t-deriv_upper}, we obtain (ii).

\medskip

Finally, we prove $u\in C(Y\times[0,\infty))$. From (i), $t\mapsto u_t$ is $t$-H\"older. It suffices to show that for any $t>0$, $u_t\in C^0(X)$. Denote \(w(t):=-\dot u_t>0\) and rewrite the equation \eqref{eq:IMAF-normalized}
	\begin{align}
	\label{eq:iMAf-ellip-version}
		(\omega+\ddc u_t)^n=V_\omega e^{-u_t+L(u_t)}\frac{g}{w(t)}\,\d V
		\eqqcolon h(t)\,\d V \quad \text{ on }Y\backslash E.
	\end{align}
    we extend $h(t)$ by $0$ cross $E$. Since $u_t\in L^\infty(Y)$, by Bedford--Taylor theory \cite{BT76,BT82}, $\omega_{u_t}^n$ put no mass on $E$. Thus, the equation \eqref{eq:iMAf-ellip-version} can be viewed as an equation of measures. Fix $a>1$ in Lemma \ref{lem:La-density-eps} and denote 
	\begin{align*}
		r_*\coloneqq \frac{ap_{0}}{a+p_{0}-1}>1.
	\end{align*}
We claim that for any  \(r\in(1,r_*)\),
	\begin{align}
	\label{claim:Lr-density}
		\sup_{0<t\leq T}\left\|\frac{g}{w(t)}\right\|_{L^r(Y\backslash E,\d V)}
\leq K_{r,T}.
	\end{align}
Given this claim, we have $h(t)\in L^r(\d V)$. By \cite[Theorem~A]{EGZ09}, we obtain $u_t\in C^0(Y)$. Next we prove the claim. By Lemma~\ref{lem:La-density-eps} and Proposition~\ref{prop:unif_Linfty}, one has
	\begin{align*}
		\int_Yw_\eps(t)^{-a}g_\eps\,\d V
\leq KI_{\eps,a}(t)\leq K.
	\end{align*}
Let $F_j\Subset Y\backslash E$ be the exhaustion subset. Recall that $w_\eps=-\dot u^\eps\rightarrow-\dot u=w$ locally uniformly on \(Y\backslash E\). Then
\begin{align*}
		\int_{F_j}w(t)^{-a}g \,\d V
\leq\liminf_{\eps\to0}
\int_{F_j}w_\eps(t)^{-a}g_\eps \, \d V
\leq K_{a,T}.
	\end{align*}
	Letting \(j\to\infty\) and using monotone convergence yields
	\begin{align} \label{lem:La-density-w(t)}
		\int_{Y\backslash E}w(t)^{-a}g \, \d V\leq K_{a,T}.
	\end{align}
One writes
	\begin{align*}
		\left(\frac gw\right)^r
       =(w^{-a}g)^{\frac{r}a}g^{r(1-\frac1a)}.
	\end{align*}
	By \eqref{lem:La-density-w(t)} and the H\"older's inequality with conjugate exponents \(\frac{a}r\) and \(\frac{a}{a-r}\), one  yields
	\begin{align*}
		\int_{Y\backslash E}\left(\frac gw\right)^r \d V
\leq
\left(\int_{Y\backslash E}w^{-a}g\,\d V\right)^{\frac{r}{a}}
\left(\int_{Y\backslash E}g^q\,\d V\right)^{\frac{a-r}a}
\leq K_{r,a,T} \left(\int_Yg^q\,\d V\right)^{\frac{a-r}a},
	\end{align*}
	where $q=\frac{r(a-1)}{a-r}>0$. Note that $r<r_*$ is equivalent to \(q<p\).
	Since $g\in L^p(\d V)$, then we obtain the claim \eqref{claim:Lr-density}.
\end{proof}

\section{Convergence and uniqueness of the inverse MA flow}
\label{sec:Convergence of inverse MA flow}

In this section, we prove the uniqueness and convergence part of Theorem \ref{thm: main_theorem}
(see Theorem \ref{thm:d_1_conv_inverse_MA} and Corollary \ref{cor:uniqueness-iMAf} below). To this end, we first
show that the inverse Monge--Amp\`ere flow, whose existence was proved in the
previous sections, coincides with its minimizing movement version, constructed
using the general theory developed in \cite{AGS,May}; see also
\cite{Streets,BDL17} for the corresponding construction of the Calabi flow.
Before recalling the minimizing movement construction, we review the
$d_2$-continuity and compactness properties of the Ding functional that carry
over readily from the smooth case.

Recall that
\begin{align*}
	D(u)=L(u)-E(u)
	=-\log\int_X e^{-u}\mu-E(u), \ u \in \mathcal E^1(X,\omega).
\end{align*}

\begin{lemma}
\label{lem:D-compactness}
(i) $ D: \mathcal E^2(X,\omega) \to \mathbb{R} $ is $d_2$-continuous and convex.\\
(ii) If $\{u_j\}_j \subset \mathcal E^2(X,\omega)$ is a $d_2$-bounded sequence and $\liminf_j D(u_j) = \inf_{\mathcal E^2(X,\omega)} D$, then there exist a subsequence $\{u_{j_k}\}_k$ and a KE potential $u\in\PSH_\omega \cap C(X) \cap C^\infty(X_\reg)$ such that $d_1(u_{j_k},u)\to 0$.
\end{lemma}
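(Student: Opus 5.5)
The plan is to take the corresponding smooth-case statement (as in \cite{CHT22,Xia21} or \cite[\S4]{BBEGZ19}) and check which ingredients survive on the klt variety. All of them turn out to be available through the variational theory of \cite{BBEGZ19,DNG18}, so no new estimates are needed.

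For (i), the functional splits as $D=L-E$.

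\emph{$d_2$-continuity.} Since $d_1\le C d_2$ on $\cE^2_\omega$, it suffices to prove $d_1$-continuity of $E$ and $L$. The energy $E$ is $d_1$-Lipschitz; indeed $|E(u)-E(v)|\le d_1(u,v)$ \cite{Dar17a,DNG18}. For $L$, I will use that $d_1$-convergence $u_j\to u$ implies $L^1$-convergence together with uniform Skoda integrability. The latter follows because $d_1$-bounded sets have bounded $\sup$ and energy, so by \cite[Theorem~1.1]{BBEGZ19} (zero Lelong numbers on $\cE^1$) and a Demailly--Koll\'ar/Zeriahi-type uniform integrability, as in Lemma~\ref{lem: uniform_skoda}, we get $\int e^{-qu_j}\,\omega^n\le C_q$ for every $q$. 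Combining this with the $L^{p_0}$ density of $\mu$ and H\"older's inequality gives $\int e^{-u_j}\,d\mu\to\int e^{-u}\,d\mu$, and the limit is positive. Hence $L$ is $d_1$-continuous.

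\emph{Convexity.} The energy $E$ is affine along finite-energy geodesics. Along the weak geodesic $[0,1]\ni s\mapsto u_s$ joining two points of $\cE^2_\omega$ (constructed in \cite{DNG18}), the function $s\mapsto L(u_s)$ is convex by Berndtsson's convexity theorem, in its klt version \cite[Lemma~4.6/Theorem~4.7]{BBEGZ19}. Apply this first for bounded endpoints, then approximate general endpoints by decreasing sequences using the $d_2$-continuity just shown. Consequently $D$ is geodesically convex.

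For (ii), let $\{u_j\}$ be $d_2$-bounded, hence $d_1$-bounded. After normalizing, $\sup u_j$ and $E(u_j)$ are bounded, and by weak compactness in $\PSH_\omega$ a subsequence converges in $L^1$ to some $u$. Upper semicontinuity of $E$ gives $u\in\cE^1_\omega$, and the uniform Skoda bound above gives $L(u_j)\to L(u)$. Then
\[
D(u)\le\liminf_j D(u_j)=\inf_{\cE^2}D .
\]
Next we identify this infimum with $\inf_{\cE^1}D$, using $d_1$-density of $\cE^2$ in $\cE^1$ and $d_1$-continuity of $D$. Therefore $u$ minimizes $D$ on $\cE^1$, so by \cite[Theorem~4.8, Lemma~3.6]{BBEGZ19} it is a KE potential, continuous on $X$ and smooth on $X_{\reg}$.

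To upgrade to $d_1$-convergence, I will use that $D(u_j)\to D(u)$ and $L(u_j)\to L(u)$, so $E(u_j)\to E(u)$. By \cite[Proposition~2.6]{BBGZ13}, $L^1$-convergence plus convergence of energies implies $d_1$-convergence (cf. \cite[Theorem~5.5]{Dar17a} in the singular setting).

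I expect the main obstacle to be the uniform Skoda/continuity step for $L$ on $d_1$-bounded sets on the singular variety. I would handle it by pulling back to $Y$ and reusing the argument of Lemma~\ref{lem: uniform_skoda} with $\eps=0$, combined with $e^{-\psi^-}\in L^{p_0}$.
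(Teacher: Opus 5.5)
Your proposal is correct and takes the same route as the paper, which just cites $d_2$-continuity of $E$ from \cite{DNG18}, $d_1$-continuity of $L$ from \cite[Lemma~4.1]{BBEGZ19} and convexity from Berndtsson \cite{Bern15}, and for (ii) defers to the compactness argument of \cite[Theorem~4.18]{Dar19}; you have essentially unpacked these references (uniform Skoda via Lemma~\ref{lem: uniform_skoda} with $\eps=0$, $L^1$-compactness, and energy convergence $\Rightarrow$ $d_1$-convergence). One small bookkeeping point: in (ii), first pass to a subsequence along which $D(u_j)\to\inf_{\cE^2}D$ before extracting the $L^1$-convergent subsequence, since the hypothesis only controls the $\liminf$ and an arbitrary subsequence need not attain it.
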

\begin{proof} By \cite{DNG18}, the functional $E$ is $d_2$-continuous. Moreover, $d_2$-convergence implies $d_1$-convergence, while $L$ is $d_1$-continuous by \cite[Lemma 4.1]{BBEGZ19}. Hence $L$ is $d_2$-continuous, proving (i). Convexity is a consequence of \cite{Bern15}.

With regard to (ii), it follows from \cite[Theorem 4.8]{BBEGZ19} that a minimizer $u \in \mathcal E^2(X,\omega) \subset \mathcal E^1(X,\omega)$ must be a KE potential.

Since $d_2$-boundedness implies $d_1$-boundedness, the rest of the proof of (ii) follows by the same argument as in \cite[Theorem~4.18]{Dar19}.
\end{proof}

Following the minimizing movement construction of \cite{Streets} building on \cite{May} (cf.\ \cite[Section~2.5]{BDL17}) for the Calabi flow, we can define the minimizing movement version of the inverse MA flow as follows. We
fix $\check{u}_0\in\cE^2(X,\omega)$. For $t\geq0$ and a
positive integer $m$, set $\check{u}_t^{m,0}=\check{u}_0$. Given $\check{u}_t^{m,j}$, define
$\check{u}_t^{m,j+1}$ to be a minimizer on $\cE^2(X,\omega)$ of
\begin{align}
\label{eq:minimizing-movement-Ding}
	v\longmapsto \frac{1}{2}d_2^2\big(v,\check{u}_t^{m,j}\big)
	+\frac{t}{m}D(v),
\end{align}
for $j=0,\ldots,m-1$, and set $\check{u}_t^m\coloneqq \check{u}_t^{m,m}$. Provided that
these minimizers exist and $\check{u}_t^m$ converges as $m\to\infty$, the $d_2$-limit
\begin{align*}
	\check{u}_t\coloneqq\lim_{m\to\infty}\check{u}_t^m \in \mathcal E^2(X,\omega)
\end{align*}
is called the \textit{minimizing movement inverse MA flow} starting from $\check{u}_0$.
We also refer to $t\mapsto \check{u}_t$ as the weak inverse MA flow. By \cite[Theorem~4.0.4]{AGS}, it is characterized
as the unique solution of the \textit{evolution variational inequality} (EVI)
\begin{align}
\label{eq:evolution_vari_ineq}
	\frac{1}{2}\frac{\d}{\d t}d_2^2(\check{u}_t,v)
	\leq D(v)-D(\check{u}_t),
\end{align}
for almost every $t>0$ and all $v\in\cE^2$.
In addition, the curve $t\mapsto \check{u}_t$ can be viewed as a curve of steepest descent with respect to $D$ in the sense that
\begin{align}
\label{eq:d_D(u_t)}
	-\frac{\d}{\d t}D(\check{u}_t)=|(\partial D)(\check{u}_t)|\left|\frac{\d \check{u}_t}{\d t} \right|=\left|\frac{\d \check{u}_t}{\d t} \right|^2\geq 0
\end{align}
for almost every $t$, where
\begin{flalign*}
\left|(\partial D)(y)\right| \coloneqq
\limsup_{z\to y}
\frac{\bigl(D(y)-D(z)\bigr)^{+}}{d_2(y,z)}, \ \ \ \ \ \left|\frac{\d \check{u}_t}{\d t}\right| \coloneqq
\lim_{s\to t}
\frac{d_2(\check{u}_s,\check{u}_t)}{|s-t|}.
\end{flalign*}

\begin{proposition}\label{prop: inverse_MA_equal_to_min_movement}
Given any initial $u_0\in\cH_\omega$, every solution $t\mapsto u_t$ of the inverse MA flow in the sense of
\textup{Definition~\ref{definition of IMAF}} coincides with the corresponding minimizing movement flow $t\mapsto\check u_t$.
\end{proposition}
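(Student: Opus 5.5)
The plan is to verify that any solution $t\mapsto u_t$ of Definition~\ref{definition of IMAF} satisfies the evolution variational inequality \eqref{eq:evolution_vari_ineq}. The minimizing movement flow is the unique EVI solution with the same initial datum \cite[Theorem~4.0.4]{AGS}, so the two curves then coincide. First, $u_t\in\cE^2(X,\omega)$ for every $t$: it is bounded, by continuity on $X\times[0,\infty)$. Second, $t\mapsto u_t$ is $d_2$-continuous on $[0,\infty)$ and locally $d_2$-Lipschitz on $(0,\infty)$. For bounded potentials one has $d_2(u_s,u_t)\lesssim \|u_s-u_t\|_{L^\infty}$, by comparison with $\max$-constructions or Pythagorean-type inequalities of \cite{DNG18}. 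Uniform continuity of $u$ on $X\times[0,T]$ and the $C^0_xC^{0,1}_t$ property therefore give the claim.

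The heart of the argument is the EVI itself. Fix $v\in\cH_\omega$; by $d_2$-density of $\cH_\omega$ and $d_2$-continuity of $D$ (Lemma~\ref{lem:D-compactness}(i)), it is enough to treat such $v$. I would use the CAT(0) geometry of $(\cE^2,d_2)$ \cite[Theorem~4.9]{DNG18}. Let $[0,1]\ni s\mapsto \gamma_s$ be the finite-energy geodesic from $u_t$ to $v$. Both endpoints are bounded, so it is a bounded geodesic, and its initial tangent $\dot\gamma_0$ is bounded. The first variation formula for $d_2^2$ along a curve, in the form proved in the smooth/Kähler setting by Darvas and extended to $\cE^2$ in \cite{DNG18}, should give, for a.e.\ $t$,
\[
\frac12\frac{\d}{\d t}d_2^2(u_t,v)\le -\frac{1}{V_\omega}\int_X \dot u_t\,\dot\gamma_0\,\omega_{u_t}^n .
\]
The right-hand side uses the flow equation \eqref{eq:iMAf-X_reg} on $X_\reg$ and the fact that $\omega_{u_t}^n$ charges no pluripolar set. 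Concretely I would prove this inequality by computing $d_2^2(u_{t+h},v)-d_2^2(u_t,v)$, using convexity of $s\mapsto d_2^2(\gamma_s,\cdot)$ and the local Lipschitz bound on $\dot u$. Writing $-\dot u_t\,\omega_{u_t}^n/V_\omega=\mu_{u_t}-\omega_{u_t}^n/V_\omega$, where $\mu_{u_t}=e^{-u_t+L(u_t)}\mu$, the right side becomes
\[
-\frac1{V_\omega}\int \dot\gamma_0\,\omega_{u_t}^n+\int\dot\gamma_0\,\mu_{u_t}
=\frac{\d}{\d s}\Big|_{s=0^+}\bigl(-E(\gamma_s)\bigr)+\frac{\d}{\d s}\Big|_{s=0^+}L(\gamma_s).
\]
Here $E$ is affine along $\gamma$, and $\frac{\d}{\d s}L(\gamma_s)=\int \dot\gamma_s\,\mu_{\gamma_s}$ by dominated convergence, since $\gamma$ is Lipschitz in $s$ uniformly in $x$. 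Berndtsson convexity \cite{Bern15} makes $s\mapsto D(\gamma_s)$ convex, so this derivative is at most $D(\gamma_1)-D(\gamma_0)=D(v)-D(u_t)$, which is \eqref{eq:evolution_vari_ineq}.

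The main obstacle is the rigorous first-variation inequality for $d_2^2$ along the weak flow. The difficulty is that $u_t$ is only continuous, smooth merely on $X_\reg$, and $\dot u$ is only $L^\infty_{\rm loc}$ in time. My first attempt would be to go back to the perturbed smooth flows $u^\eps$ on $Y$, prove the EVI there for the perturbed Ding functionals, and pass to the limit $\eps\to0$. This limit would use the uniform $C^0$ bound (Proposition~\ref{prop:unif_Linfty}), the uniform time estimates (Proposition~\ref{prop:sub_stab_holder}, Proposition~\ref{prop:local_t-deriv_upper}), and the quasi-monotone convergence from the proof of Theorem~\ref{thm:longtime-exist}. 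The catch is that this only reaches the constructed solution. For an arbitrary solution I would instead derive the EVI directly in integrated form, $\tfrac12 d_2^2(u_{t_2},v)-\tfrac12 d_2^2(u_{t_1},v)\le\int_{t_1}^{t_2}(D(v)-D(u_t))\,dt$. I would obtain it by approximating $u_t$ and $v$ from above by smooth potentials, using that $\dot u$ is bounded on $[\tau,T]$ and the adapted measure has $L^{p_0}$ density. Finally, $u_t\to u_0$ as $t\to0$ in $d_2$, and the uniqueness of EVI solutions gives $u_t=\check u_t$.
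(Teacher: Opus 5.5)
There is a genuine gap, and it is the step you yourself flag as the main obstacle. The first-variation inequality
\[
\frac12\frac{\d}{\d t}d_2^2(u_t,v)\le-\frac{1}{V_\omega}\int_X\dot u_t\,\dot\gamma_0\,\omega_{u_t}^n
\]
carries the whole content of the EVI. You only assert that it ``should'' hold, and nothing you cite provides it for curves that are merely continuous on $X$ and smooth on $X_\reg$.

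The tool you propose points the wrong way. Take convexity of $s\mapsto d_2^2(\gamma_s,w)$ along the geodesic from $u_t$ to $v$, with $w=u_{t+h}$. This gives a \emph{lower} bound for $d_2^2(u_{t+h},v)$: nonpositive curvature makes $\frac12 d_2^2(\cdot,v)$ convex, so it lies above its first-order expansion. The EVI needs an \emph{upper} bound. That requires an admissible competitor path from $u_{t+h}$ to $v$ of length $d_2(u_t,v)-h\,d_2(u_t,v)^{-1}\frac{1}{V_\omega}\int_X\dot u_t\dot\gamma_0\,\omega_{u_t}^n+o(h)$. The obvious candidate $\gamma_s+(1-s)(u_{t+h}-u_t)$ is in general not $\omega$-psh, since weak geodesics are degenerate and $\ddc\dot u$ is uncontrolled near $X\setminus X_\reg$.

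Your fallback, smoothing $u_t$ and $v$ from above, is one sentence with no mechanism. Smoothing each $u_t$ separately destroys the link between $\partial_t u$ and $\omega_{u_t}^n$ that the flow equation provides. Route (b) reaches only the constructed solution, whereas the statement concerns every solution. It also lacks an EVI at the perturbed level: the perturbed Ding functionals built from $g_\eps\,\d V$ are not covered by Berndtsson's convexity, which would need $\Ric(g_\eps\,\d V)\geq\omega_\eps$, and the construction does not provide this.

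The paper never differentiates $d_2^2(u_t,v)$. It proves only the energy--dissipation inequality
\[
\frac12\int_\tau^T\Bigl(\Bigl|\frac{\d u_t}{\d t}\Bigr|^2+|(\partial D)(u_t)|^2\Bigr)\d t\le D(u_\tau)-D(u_T).
\]
This needs three estimates, all in the easy direction:
\begin{itemize}
\item the dissipation identity $-\frac{\d}{\d t}D(u_t)=\frac{1}{V_\omega}\int_X\dot u_t^2\,\omega_{u_t}^n$;
\item the speed bound $|\frac{\d u_t}{\d t}|^2\le\frac{1}{V_\omega}\int_X\dot u_t^2\,\omega_{u_t}^n$, from the length of the affine segment joining $u_t$ to $u_{t+h}$;
\item the slope bound $|(\partial D)(u_t)|^2\le\frac{1}{V_\omega}\int_X\dot u_t^2\,\omega_{u_t}^n$.
\end{itemize}

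For the slope bound, your own chain (concavity of $E$, the derivative of $L$, Berndtsson convexity, the flow equation) already gives $D(v)-D(u_t)\ge-\frac{1}{V_\omega}\int_X\dot\gamma_0\,\dot u_t\,\omega_{u_t}^n$. The paper closes it with Cauchy--Schwarz and the one-sided tangent bound $\frac{1}{V_\omega}\int_X|\dot\gamma_0|^2\omega_{u_t}^n\le d_2^2(u_t,v)$. That tangent bound is proved by replacing $u_t$ with envelopes $P_{\pi^*\omega}(f^j)\geq u_t$ that agree with $u_t$ near a compact $K\Subset Y\setminus E$. One then compares the resulting geodesics, applies the $d_2$ formula of \cite{DNG18} to them, and uses Fatou on $K$.

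The identification with $\check u_t$ is then taken from \cite[Theorem~4.2]{MS20}: a curve satisfying this energy--dissipation inequality coincides with the EVI flow. So your computation is salvageable. Drop the first variation of $d_2^2$, combine your inequality with Cauchy--Schwarz and the tangent bound, and invoke \cite[Theorem~4.2]{MS20} in place of EVI uniqueness.
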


\begin{proof}
Let $t\mapsto u_t$ be a solution in the sense of
Definition~\ref{definition of IMAF}. Throughout this proof we use the
unnormalized equation \eqref{eq:iMAf-X_reg}. Fix
$0<\tau<T<\infty$. On $X \times [\tau,T]$, the potential $u$ is uniformly bounded
and $t$-Lipschitz. $u$ is also smooth in space and $C^1$ in time on $X_\reg\times [\tau,T]$. 

We first compute the variation of  $D$. Taking derivatives of $E$ and $L$ along the inverse MA flow we obtain that
\begin{equation}\label{eq: Energy_deriv}
\begin{aligned}
\frac{\d}{\d t}E(u_t)
&=\frac{1}{V_\omega}\int_X\dot u_t\,\omega_{u_t}^n,
\\
\frac{\d}{\d t}L(u_t)
&=\int_X\dot u_t e^{-u_t+L(u_t)}\mu,\qquad t \in [\tau, T].
\end{aligned}
\end{equation}
Indeed, both formulas follow, since $u$ is  $t$-Lipschitz on $X \times [\tau, T]$ we can to apply the dominated convergence theorem together with standard convergence arguments of Bedford--Taylor theory.

Using
\eqref{eq:iMAf-X_reg}, we obtain
\begin{align}
\label{eq: IMAF_D_dissipation}
-\frac{\d}{\d t}D(u_t)
=\frac{1}{V_\omega}\int_X\dot u_t^{\,2}\,\omega_{u_t}^n.
\end{align}
We next bound the metric speed. The path-length upper bound for the
affine segment joining $u_t$ to $u_{t+h}$ gives
\begin{align*}
d_2(u_t,u_{t+h})
\leq\int_0^1\left(
\frac{1}{V_\omega}\int_X|u_{t+h}-u_t|^2
\omega_{(1-s)u_t+s u_{t+h}}^n
\right)^{1/2}\d s.
\end{align*}
This bound follows from 
\cite[Proposition~1.16]{DNG18}. In particular,
$d_2(u_t,u_{t+h})\leq\|u_{t+h}-u_t\|_{L^\infty}$, so the curve $t \to u_t$
locally $d_2$-Lipschitz and is $d_2$-continuous at $t=0$.
Dividing both sides by $|h|$ and using the dominated convergence theorem  again together with standard convergence arguments of Bedford--Taylor theory gives
\begin{align}
\label{eq: IMAF_metr_speed}
\left|\frac{\d u_t}{\d t}\right|^2
\leq \frac{1}{V_\omega}\int_X\dot u_t^{\,2}\,\omega_{u_t}^n
\end{align}
for almost every $t>0$.

We fix $t>0$ and
$v\in\cH_\omega$, and let $\gamma_{t,s}$, $0\leq s\leq1$, be the bounded weak geodesic from $u_t$ to $v$. We will show that its initial velocity satisfies
\begin{equation}\label{eq: geod_initial_bound}
\frac{1}{V_\omega}\int_X
\left|\left.\frac{\partial}{\partial s}\right|_{s=0^+}
\gamma_{t,s}\right|^2\omega_{u_t}^n
\leq d_2^2(u_t,v).
\end{equation}
Indeed, on the resolution $\pi: Y \to X$ we fix $K\Subset Y\setminus E$. We choose smooth functions $f^j\geq u$
on $X \times [0 \times \infty)$ converging uniformly to $u_t$, with $f^j=u_t$ on a neighborhood of $K$.
The $\pi^*\omega$-psh envelopes
$u^j_t:=P_{\pi^*\omega}(f^j_t)$ satisfy $u_t\leq u^j_t\leq f^j_t$ and equal
$u_t$ near $K$. Let $\gamma^j_{t,s}, \ s \in [0,1]$ be the weak geodesics joining $u^j_t$ to $v_t$.

 The comparison principle for weak geodesics gives $\gamma^j_{t,s}\geq\gamma_{t,s}$ and uniform
convergence on $X \times [0,1]$ as $j \to \infty$. On $K$, the initial values agree, so
comparison and convexity in $s$ give
\begin{align*}
\left.\frac{\partial}{\partial s}\right|_{s=0^+}\gamma_{t,s}(x)
\leq
\left.\frac{\partial}{\partial s}\right|_{s=0^+}\gamma^j_{t,s}(x)
\leq\frac{\gamma^j_{t,s}(x)-u_t(x)}{s},\qquad s>0, \ \ x \in K.
\end{align*}
 Letting first $j\to\infty$ and then $s\searrow0$ we obtain that $\frac{\partial}{\partial s}\big|_{s=0^+}\gamma^j_{t,s} \to \frac{\partial}{\partial s}\big|_{s=0^+}\gamma_{t,s}$ on $K$.

Since $f^j$ is smooth and $v\in\cH_\omega$, the of formula
\cite[Theorem~1.21]{DNG18} applies to the weak geodesic $\gamma^j_{t,s}$ and gives
\begin{align*}
\frac{1}{V_\omega}\int_Y
\left|\left.\frac{\partial}{\partial s}\right|_{s=0^+}
\gamma^j_{t,s}\right|^2\omega_{u^j}^n
=d_2^2(u^j,v).
\end{align*}
 Moreover, $u^j_t=u_t$ on a neighborhood of $K$, so locality of the
Monge--Amp\`ere operator gives $\omega_{u^j}^n=\omega_{u_t}^n$ there.
We can therefore apply Fatou's lemma on $K$ with the fixed measure
$\omega_{u_t}^n$, using the pointwise convergence of the initial
 velocities proved above:
\begin{align*}
\frac{1}{V_\omega}\int_K
\left|\left.\frac{\partial}{\partial s}\right|_{s=0^+}
\gamma_{t,s}\right|^2\omega_{u_t}^n&\leq\liminf_{j\to\infty}\frac{1}{V_\omega}\int_K
\left|\left.\frac{\partial}{\partial s}\right|_{s=0^+}
\gamma^j_{t,s}\right|^2\omega_{u_t}^n\\
&=\liminf_{j\to\infty}\frac{1}{V_\omega}\int_K
\left|\left.\frac{\partial}{\partial s}\right|_{s=0^+}
\gamma^j_{t,s}\right|^2\omega_{u^j_t}^n \leq\liminf_{j\to\infty}d_2^2(u^j_t,v)
=d_2^2(u_t,v).
\end{align*}
The last equality follows from uniform convergence $u^j_t\to u_t$, which
implies $d_2(u^j_t,u_t)\to0$, and the triangle inequality.
Since $K \subset Y\setminus E$ was arbitary \eqref{eq: geod_initial_bound} follows.

By the convexity of $D$, the affinity of $E$ along $\gamma_{t,s}$, and the
concavity of $E$ along affine segments, for any $t>0$ we  have
\begin{align*}
D(v)-D(u_t)
&\geq\left.\frac{\d}{\d s}\right|_{s=0^+}D(\gamma_{t,s})\\
&\geq\int_X
\left.\frac{\partial}{\partial s}\right|_{s=0^+}\gamma_{t,s}
\left(e^{-u_t+L(u_t)}\mu-\frac{1}{V_\omega}\omega_{u_t}^n\right)\\
&=-\frac{1}{V_\omega}\int_X
\left.\frac{\partial}{\partial s}\right|_{s=0^+}\gamma_{t,s}
\dot u_t\,\omega_{u_t}^n\\
&\geq-d_2(u_t,v)
\left(\frac{1}{V_\omega}\int_X\dot u_t^{\,2}\,\omega_{u_t}^n\right)^{1/2},
\end{align*}
where the second line  is justified by $E(\gamma_{t,s})-E(u_t)
\le \frac1{V_\omega}\int_X(\gamma_{t,s}-u_t)\,\omega_{u_t}^n$ and convexity, the thid line by definition of the inverse MA flow and the fourth line by the Cauchy--Schwarz inequality. 
    
By the $d_2$-density of $\cH_\omega$ and
Lemma~\ref{lem:D-compactness}(i), the resulting inequality holds for
every $v\in\cE^2$. Taking the limsup as $v\to u_t$ gives
\begin{align}
\label{eq:IMAF-slope-bound}
|(\partial D)(u_t)|^2
\leq \frac{1}{V_\omega}\int_X\dot u_t^{\,2}\,\omega_{u_t}^n.
\end{align}

Adding \eqref{eq: IMAF_metr_speed} and \eqref{eq:IMAF-slope-bound},
dividing by two, and using \eqref{eq: IMAF_D_dissipation}, we obtain,
for almost every $t\in[\tau,T]$,
\begin{align*}
\frac12\left(\left|\frac{\d u_t}{\d t}\right|^2
+|(\partial D)(u_t)|^2\right)\leq\frac{1}{V_\omega}\int_X\dot u_t^{\,2}\,\omega_{u_t}^n=-\frac{\d}{\d t}D(u_t).
\end{align*}
To integrate this inequality, note that both $E$ and $L$ are
$1$-Lipschitz with respect to the uniform norm on bounded potentials.
Since $t\mapsto u_t$ is Lipschitz in that norm on $[\tau,T]$, the map
$t\mapsto D(u_t)=L(u_t)-E(u_t)$ is Lipschitz, and hence absolutely
continuous, on this interval. The fundamental theorem of calculus
therefore gives
\begin{align}\label{eq: weak_EVI}
\frac12\int_\tau^T
\left(\left|\frac{\d u_t}{\d t}\right|^2
+|(\partial D)(u_t)|^2\right)\d t\leq-\int_\tau^T\frac{\d}{\d t}D(u_t)\,\d t=D(u_\tau)-D(u_T).
\end{align}

This inequality is weaker than the EVI inequality \eqref{eq:evolution_vari_ineq}. That being said, since the curve $t \to \check u_t$ solves the EVI inequality, \eqref{eq: weak_EVI} immediately implies using 
\cite[Theorem~4.2]{MS20} that $u_t = \check u_t$, concluding the argument.
\end{proof}

Since the minimizing movement MA flow is unique, we immediately obtain our main uniqueness result:

\begin{corollary}[Uniqueness]
\label{cor:uniqueness-iMAf}
For each $u_0\in\cH_\omega$, there is at most one solution
of the inverse MA flow starting from $u_0$ in the sense of
\textup{Definition~\ref{definition of IMAF}}.
\end{corollary}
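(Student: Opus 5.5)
The plan is to reduce the corollary directly to Proposition~\ref{prop: inverse_MA_equal_to_min_movement} together with the uniqueness of the minimizing movement flow.

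Let $t\mapsto u_t$ and $t\mapsto u'_t$ be two solutions of the inverse MA flow in the sense of Definition~\ref{definition of IMAF}, both starting from the same $u_0\in\cH_\omega$.

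\textbf{Step 1: identify each solution with a minimizing movement flow.} Since $u_0\in\cH_\omega\subset\cE^2(X,\omega)$, the minimizing movement flow $t\mapsto\check u_t$ starting from $\check u_0=u_0$ is well defined. By \cite[Theorem~4.0.4]{AGS}, it is the unique solution of the EVI \eqref{eq:evolution_vari_ineq}; this uses the convexity and $d_2$-continuity of $D$ from Lemma~\ref{lem:D-compactness}(i) and the $\mathrm{CAT}(0)$ property of $(\cE^2_\omega,d_2)$ from \cite[Theorem~4.9]{DNG18}. Applying Proposition~\ref{prop: inverse_MA_equal_to_min_movement} to $u$ gives $u_t=\check u_t$ for all $t$, and applying it to $u'$ gives $u'_t=\check u_t$ for all $t$.

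\textbf{Step 2: conclude pointwise equality.} From Step 1, $u_t=u'_t$ as elements of $\cE^2(X,\omega)$, that is, $d_2(u_t,u'_t)=0$. Since $d_2$ is a genuine metric on $\cE^2_\omega$, the two potentials agree as $\omega$-psh functions, hence almost everywhere. Two $\omega$-psh functions that agree almost everywhere agree everywhere, because each equals the upper semicontinuous regularization of its $L^1$ class. The time $t=0$ is covered by the common initial condition. Therefore $u=u'$ on $X\times[0,\infty)$.

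\textbf{Main point to check.} The only subtlety is that Proposition~\ref{prop: inverse_MA_equal_to_min_movement} is stated for every solution. Its proof uses only the regularity contained in Definition~\ref{definition of IMAF}: local $t$-Lipschitz bounds on $[\tau,T]$, continuity at $t=0$, and smoothness on $X_{\reg}$. No property specific to the solution constructed in Section~\ref{sec:Existence_inv_MA_flow} is needed, so the proposition applies equally to both $u$ and $u'$.

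Similarly, the appeal to \cite[Theorem~4.2]{MS20} inside that proposition requires $d_2$-continuity of $t\mapsto u_t$ at $t=0$. This follows from the bound $d_2(u_t,u_0)\le\|u_t-u_0\|_{L^\infty}$ combined with $u\in C(X\times[0,\infty))$ on the compact space $X$. With these two points checked, the corollary is immediate.
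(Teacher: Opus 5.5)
Your proposal is correct and follows the paper's argument: any solution in the sense of Definition~\ref{definition of IMAF} coincides with the unique minimizing movement flow by Proposition~\ref{prop: inverse_MA_equal_to_min_movement}, so two solutions from the same $u_0$ coincide. Your additional remarks (passing from $d_2$-equality to pointwise equality, and $d_2$-continuity at $t=0$ via uniform continuity on $X\times[0,1]$) only make explicit what the paper's one-line deduction leaves implicit.
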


Next we obtain our main long time convergence result:

\begin{theorem}
\label{thm:d_1_conv_inverse_MA} Let $u_0 \in \mathcal H_\omega$ and let $t \to u_t $ be a solution of the inverse MA flow in the sense of
\textup{Definition~\ref{definition of IMAF}}. Then $X$ admits a KE metric if and only if there exists $u_\infty \in \PSH_\omega \cap C(X)$  such that $u_t \to u_\infty$  in $L^1(X)$ as $t \to \infty$. In this latter case $\omega_{u_\infty}$ is a KE metric that is smooth on $X_\reg$.
\end{theorem}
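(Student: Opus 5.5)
By Proposition~\ref{prop: inverse_MA_equal_to_min_movement}, the solution $u_t$ coincides with the minimizing movement flow $\check u_t$ for the $d_2$-convex, $d_2$-continuous functional $D$ on the $\mathrm{CAT}(0)$ space $(\cE^2_\omega,d_2)$. I would therefore work entirely with the EVI \eqref{eq:evolution_vari_ineq} and with the dissipation identity \eqref{eq:d_D(u_t)}, and prove the two directions separately.

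\emph{KE exists $\Rightarrow$ convergence.} Suppose $\varphi$ is a KE potential. By \cite[Theorem~4.8]{BBEGZ19} it minimizes $D$, and $\varphi\in \cE^2_\omega$ because it is continuous.

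Taking $v=\varphi$ in the EVI shows that $t\mapsto d_2(u_t,\varphi)$ is nonincreasing, so $\{u_t\}$ is $d_2$-bounded. Integrating the EVI from $0$ to $T$ gives
\[
\int_0^T\bigl(D(u_t)-D(\varphi)\bigr)\,\d t\le \tfrac12 d_2^2(u_0,\varphi).
\]
Since $D(u_t)$ is nonincreasing, this yields $D(u_t)-\inf D\le C/t\to0$.

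Lemma~\ref{lem:D-compactness}(ii) then says that every sequence $t_k\to\infty$ has a subsequence converging in $d_1$ to some KE potential $\varphi'$. Applying the monotonicity of $d_2(u_t,\varphi')$ to this limit, I would upgrade subsequential convergence to full convergence. This is the standard Opial-type argument for gradient flows in Hadamard spaces (Bačák/Mayer): weak limits are minimizers and distances to minimizers are monotone, so the limit is unique.

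The catch is that Lemma~\ref{lem:D-compactness}(ii) only gives $d_1$ convergence, not $d_2$. To handle this I would use that the KE potentials form a single orbit of $\mathrm{Aut}_0(X)$, which is compact modulo the action. Alternatively, I would argue directly in $L^1$: any two subsequential $L^1$-limits $\varphi',\varphi''$ are KE, and the monotone quantities $d_2(u_t,\varphi')$ and $d_2(u_t,\varphi'')$ have limits. Then the $\mathrm{CAT}(0)$ midpoint inequality together with the convexity of $D$ along the geodesic from $\varphi'$ to $\varphi''$ forces $\varphi'=\varphi''$. I would try this distance-comparison argument first.

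Since $d_1$ convergence implies $L^1$ convergence, we obtain $u_t\to u_\infty$ in $L^1$. By \cite[Lemma 3.6]{BBEGZ19}, $u_\infty$ is continuous and smooth on $X_\reg$.

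\emph{Convergence $\Rightarrow$ KE exists.} Suppose $u_t\to u_\infty$ in $L^1$ with $u_\infty$ continuous. I would show that $u_\infty$ minimizes $D$.

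First, $E(u_t)$ is constant in the unnormalized flow. This follows from \eqref{eq: Energy_deriv}, since $\int \dot u_t\,\omega_{u_t}^n=V_\omega-V_\omega\int e^{-u_t+L}\mu=0$. Hence $\sup u_t$ is bounded below, and by the Skoda bound $L(u_t)$ is controlled. By upper semicontinuity of $E$ under $L^1$ convergence, $E(u_\infty)\ge E(u_0)$. Moreover $L(u_t)\to L(u_\infty)$ by uniform Skoda integrability (as in Lemma~\ref{lem: uniform_skoda}), so
\[
D(u_\infty)\le\liminf_{t\to\infty} D(u_t)=\lim_{t\to\infty} D(u_t).
\]

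From the EVI, for every $v$,
\[
\frac1T\bigl(d_2^2(u_T,v)-d_2^2(u_0,v)\bigr)\ge 2\bigl(D(u_T)-D(v)\bigr).
\]
Here is the main obstacle: I need $d_2^2(u_T,v)/T\to0$, i.e.\ sublinear growth of $d_2(u_T,v)$, which is not automatic from $L^1$ convergence. My plan is to combine two facts. On the one hand, $d_1(u_t,u_\infty)\to 0$ should follow from $L^1$ convergence plus convergence of energies; if $E$ is not continuous here, I would pass through $\max(u_t,u_\infty)$-type envelopes. On the other hand, the metric speed is square integrable by \eqref{eq:d_D(u_t)}, since $\int_0^\infty |\dot u|^2\,\d t\le D(u_0)-\lim D(u_t)<\infty$. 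Cauchy--Schwarz then gives $d_2(u_0,u_T)=o(T)$, indeed $O(\sqrt T)$, hence $d_2^2(u_T,v)=O(T)$. That is not quite enough on its own, so I would refine it to $o(T)$: splitting the speed integral as $\int_0^S+\int_S^T$ and choosing $S$ large so that $\int_S^\infty|\dot u|^2$ is small gives $d_2(u_0,u_T)\le C_S+\sqrt{\eta\,T}$ with $\eta$ arbitrarily small, hence $d_2^2(u_T,v)/T\to0$.

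Consequently $\lim D(u_t)\le D(v)$ for all $v\in\cE^2$, and therefore $D(u_\infty)\le\inf D$. Since $u_\infty\in\cE^1$, \cite[Theorem~4.8]{BBEGZ19} shows that $\omega_{u_\infty}$ is KE.
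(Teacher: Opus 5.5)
Your forward direction has a genuine gap, at exactly the point you flag. Lemma~\ref{lem:D-compactness}(ii) only gives subsequential $d_1$-limits. Monotonicity of $t\mapsto d_2(u_t,\varphi')$ upgrades this to full convergence only if $d_2(u_{t_k},\varphi')\to0$, and $d_1$-convergence does not give that. The Opial argument in a $\mathrm{CAT}(0)$ space needs $\varphi'$ to be the $d_2$-weak ($\Delta$-)limit of $u_{t_k}$, i.e.\ its asymptotic center, so that $\lim_t d_2(u_t,\varphi')<\lim_t d_2(u_t,q)$ for every minimizer $q\neq\varphi'$.

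Your midpoint-plus-convexity argument does not produce this. It shows that $q\mapsto\lim_t d_2^2(u_t,q)$ is strongly convex on the convex set of minimizers, hence has a unique minimizer. But nothing ties a $d_1$-limit of $u_{t_k}$ to that minimizer, because $d_1$-convergence gives no control on $\lim_k d_2(u_{t_k},q)$.

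The $\mathrm{Aut}_0(X)$ remark does not close the gap either. Constancy of $E(u_t)$ fixes the additive normalization, but if $\mathrm{Aut}_0(X)$ is nontrivial there is a positive-dimensional family of KE potentials with the same energy. So two subsequential limits need not coincide a priori.

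The missing ingredient is a compatibility between the two topologies. The paper first gets from \cite[Theorem~1.5]{Bacak13} that the whole trajectory converges $d_2$-weakly to some minimizer $u_\infty$. It then uses \cite[Theorem~5.3]{BDL17}: a $d_2$-bounded sequence converging in $d_1$ converges $d_2$-weakly to the same limit. Hence every $d_1$-subsequential limit equals $u_\infty$, which gives $d_1(u_t,u_\infty)\to0$.

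Your converse direction is essentially the paper's, but the ``main obstacle'' you identify comes from a sign error. Integrating \eqref{eq:evolution_vari_ineq} and using that $D(u_t)$ is nonincreasing gives $2T\bigl(D(u_T)-D(v)\bigr)\le d_2^2(u_0,v)-d_2^2(u_T,v)\le d_2^2(u_0,v)$, which is exactly what you used in the forward direction. So $\lim_t D(u_t)\le D(v)$ for all $v\in\cE^2$, with no control of $d_2(u_T,v)$ needed. Your square-integrable-speed argument is valid, given $\lim_t D(u_t)=L(u_\infty)-E(u_0)>-\infty$, but superfluous. You should also record $\inf_{\cE^2}D=\inf_{\cE^1}D$, e.g.\ via bounded truncations or density, before applying \cite[Theorem~4.8]{BBEGZ19}, which characterizes minimizers over $\cE^1$.
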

\begin{proof}
By Proposition~\ref{prop: inverse_MA_equal_to_min_movement}, we have
$u_t=\check{u}_t$ for all $t\geq0$. For any $v\in\cE^2(X,\omega)$, integrating
\eqref{eq:evolution_vari_ineq} from $0$ to $t$ and using that
$s\mapsto D(u_s)$ is non-increasing gives
\begin{align*}
	t\bigl(D(u_t)-D(v)\bigr)
	&\leq \frac{1}{2}\left(d_2^2(u_0,v)-d_2^2(u_t,v)\right)
	\leq \frac{1}{2}d_2^2(u_0,v).
\end{align*}
Consequently,
\begin{align*}
	\lim_{t\to\infty}D(u_t)=\inf_{\cE^2}D=\inf_{\cE^1}D,
\end{align*}
where the infimum is allowed to be $-\infty$. The two infima agree because,
for every $w\in\cE^1$, the bounded truncations $w_j:=\max\{w,-j\}\in\cE^2$
satisfy $E(w_j)\to E(w)$ and $L(w_j)\to L(w)$, hence $D(w_j)\to D(w)$.

First suppose that $X$ admits a KE metric. Its continuous potential belongs
to $\cE^2$ and minimizes $D$ by \cite[Theorem~4.8]{BBEGZ19}.
By \cite[Theorem~1.5]{Bacak13}, there is a minimizer $u_\infty$ of $D$ such
that $u_t$ converges $d_2$-weakly to $u_\infty$ as $t\to\infty$.
By \cite[Theorem~4.8 and Lemma~3.6]{BBEGZ19}, $u_\infty$ is a KE potential
in $\PSH_\omega\cap C(X)\cap C^\infty(X_\reg)$.

By the $d_2$-contractivity of the gradient-flow semigroup
\cite[Theorem~4.0.4]{AGS}, applied to $t\mapsto u_t$ and the stationary
flow $t\mapsto u_\infty$, the map
$t\mapsto d_2(u_t,u_\infty)$ is non-increasing. In particular, the
trajectory $u_t$ is $d_2$-bounded.

Let $t_k\to\infty$ be arbitrary. Since $D(u_t)\to\inf_{\cE^2}D$,
Lemma~\ref{lem:D-compactness}(ii) gives, after passing to a subsequence,
that $u_{t_k}$ converges in $d_1$ to a minimizer $v_\infty$ of $D$.
By \cite[Theorem~5.3]{BDL17}, it also converges $d_2$-weakly to $v_\infty$. Although \cite{BDL17} treats only the smooth K\"ahler setting, the proof of this fact (and many other statements) carries over verbatim to the present singular setting.

Since the whole trajectory converges $d_2$-weakly to $u_\infty$, we have
$v_\infty=u_\infty$. Thus every sequence $t_k\to\infty$ has a subsequence
converging to $u_\infty$ in $d_1$, which proves that
$d_1(u_t,u_\infty)\to0$. In particular, $u_t\to u_\infty$ in $L^1(X)$.

Conversely, suppose that $u_t\to u_\infty$ in $L^1(X)$ for some
$u_\infty\in\PSH_\omega\cap C(X)$. Since the adapted measure $\mu$ is `tame',
\cite[Proposition~1.4]{BBEGZ19} gives
$e^{-u_{t_j}}\to e^{-u_\infty}$ in $L^1(\mu)$ (c.f. Theorem 1.1 in \cite{DZ26}). Thus
$L(u_t)\to L(u_\infty)$ as $t\to\infty$.
The $L^1$-upper semicontinuity of $E$ now yields
\begin{align*}
	D(u_\infty)
	&=L(u_\infty)-E(u_\infty)
	\leq\liminf_{t\to\infty}\bigl(L(u_t)-E(u_t)\bigr)
	=\inf_{\cE^1}D.
\end{align*}
Thus $u_\infty\in\cE^1$ minimizes $D$. By \cite[Theorem~4.8 and Lemma~3.6]{BBEGZ19}, $\omega_{u_\infty}$ is a KE
metric that is smooth on $X_\reg$.
\end{proof}

\begingroup 
\renewcommand\refname{References}
	\bibliographystyle{alpha}
	\bibliography{Reference_IMAF.bib}

\endgroup 
\end{document}